\begin{document}
\title[$\ell^{p}$ Curie--Weiss model]{Curie--Weiss Model under $\ell^{p}$ constraint and a Generalized Hubbard--Stratonovich Transform}

\author[Dey]{Partha S.~Dey$^1$}
\address{$^1$University of Illinois at Urbana Champaign, 1409 W Green Street, Urbana, Illinois 61801.}
\email{psdey@illinois.edu}

\author[Kim]{Daesung Kim$^2$}
\address{$^2$Georgia Institute of Technology, 686 Cherry Street, Atlanta, GA 30332.}
\email{dkim3009@gatech.edu}
		
\begin{abstract}
We consider the Ising Curie--Weiss model on the complete graph constrained under a given $\ell^{p}$ norm for some $p>0$. For $p=\infty$, it reduces to the classical Ising Curie--Weiss model. We prove that for all $p>2$, there exists $\gb_{c}(p)$ such that for $\gb<\gb_{c}(p)$, the magnetization is concentrated at zero and satisfies an appropriate Gaussian CLT. In contrast, for $\gb>\gb_{c}(p)$ the magnetization is concentrated at $\pm m_\ast$ for some $m_\ast>0$. We have $\gb_{c}(p)>1$ for $p>2$ and $\lim_{p\to\infty}\gb_{c}(p)=3$. We further generalize the model for general symmetric spin distributions and prove a similar phase transition. For $0<p<1$, the log-partition function scales at the order of $n^{2/p-1}$. The proofs are based on a generalized Hubbard--Stratonovich (GHS) transform, which is of independent interest. 
\end{abstract}
		
\date{\today}
\subjclass[2020]{Primary: 60G50, 60F99, 05C81}
\keywords{Ising model, Phase transition, Free energy, Central Limit Theorem}
\maketitle

\setcounter{tocdepth}{1}
\tableofcontents
	
\section{Introduction}\label{sec:intro}

\subsection{The Model}\label{ssec:model}

We consider the mean-field Curie--Weiss model under an $\ell^p$ constraint. The model has the same formulas for the Hamiltonian and the Gibbs measure as in the classical Curie--Weiss model. However, the spins in this model are taken from the $n$-dimensional $\ell^p$ sphere instead of the hypercube as in the classical case. 

We fix a positive real number $p>0$ and define the spin configuration space $S_{n,p}$ to be the $n$-dimensional $\ell^{p}$-sphere of radius $n^{1/p}$, \ie
\begin{align*}
S_{n,p}:=\left\{\mvgs\in\dR^{n}\ \biggl|\ \frac1n\sum_{i=1}^{n}\abs{\gs_i}^p=1\right\}.
\end{align*}
Consider the Gibbs measure at inverse temperature $\gb>0$ given by
\begin{align}\label{eq:magnetization}
d\mu_{n,\gb,p}(\mvgs):=\cZ_{n,p}(\gb)^{-1}\exp(\gb H_{n}(\mvgs))\, d_{n,p}\mvgs,
\end{align}
where $d_{n,p}$ is the uniform measure on $S_{n,p}$, $H_n(\mvgs):=\frac1n\sum_{i<j}\gs_i \gs_j$ is the Hamiltonian, and
\begin{align}
\cZ_{n,p}(\gb)&:=\int_{S_{n,p}} \exp(\gb H_{n}(\mvgs))\, d_{n,p}\mvgs\label{def:pf}
\end{align}
is the partition function. 

The classical Curie--Weiss model can be considered as the $p=\infty$ version of this model, where the critical inverse temperature is $\gb_{c}(\infty)=1$. When $p=2$, this model is equivalent to the self-scaled version with standard Gaussian base measure, and it is known that a similar phase transition happens at $\gb_{c}(2)=1$ (see~\cite{CG16, G14} for more details at the critical $\gb_c(2)$).

\subsection{Motivation}

The classical Curie--Weiss model and its generalization by~\cite{EN78} consider the product base measure, which leads to the independent structure. A natural question is whether there is a similar phase transition when the spin configuration has a dependence structure. In particular, in the $\ell^p$ constraint model, each spin has a global interaction with other spins and one question is whether there exists some $\gb_{c}(p)\in (0,\infty)$ such that for $\gb<\gb_{c}(p)$ the typical magnetization is concentrated at $0$ and for $\gb>\gb_{c}(p)$ it is not. Another question is whether developing a general framework to analyze constrained spin models is possible. 

The classical model can be understood as the special case $p=\infty$ in our model, and we are interested in the asymptotic behavior when $n$ goes to infinity for finite $p>0$. A natural question is whether the limits in $n$ and $p$ are interchangeable and whether there is a discontinuity. Specifically, we are interested in the behavior of the function $p\mapsto \gb_{c}(p)$ as $p$ varies from $0$ to $\infty$, for example, whether it is continuous and type of discontinuity if it is not. We clearly know that $\gb_{c}(2)=\gb_{c}(\infty)=1$. One might suspect that $\gb_{c}(p)$ is a constant or a continuous function for $p\in[2,\infty]$. However, the actual picture is different and needs detailed analysis.

Another motivation of the $\ell^p$-constraint model is the connection with the $\ell^p$-Gaussian Grothendieck problem. The $\ell^p$-Grothendieck problem is the optimization problem regarding the quadratic forms of a matrix over $\ell^p$. The corresponding disorder model of the $\ell^p$ Curie--Weiss model is closely related to the $\ell^p$-Grothendieck problem with a random matrix (see~\cite{ChenSen23, Domin22, KhoNao12}).

\subsection{Simplification}\label{ssec:gen}

We can rewrite the model~\eqref{eq:magnetization} with respect to a sequence of i.i.d.~random variables to remove the dependency arising from the $\ell^{p}$ constraint, allowing us to generalize the model further. We define the norm $\norms{\cdot}_{n,p}$ on $\dR^{n}$ as follows
\begin{align}\label{def:norm}
\norms{\mvx}_{n,p}:=\left( \frac1{n}\sum_{i=1}^{n}\abs{x_{i}}^{p}\right)^{1/p}\text{ for } \mvx\in \dR^{n}.
\end{align}
Next, we define a collection of symmetric probability densities indexed by $p>0$ given by
\begin{align*}
\rho_{p}(x):=c_{p}\cdot e^{-\abs{x}^p/p},\quad x\in\dR
\end{align*} 
where $c_{p}:=\frac{p^{-1/p}}{2\gC(1+1/p)}$
for $p>0$ and use $Z_{p}$ to denote a typical random variable having density $ \rho_{p}$. Note that $|Z_{1}|\sim \text{Exp}(1)$ and $Z_{2}\sim \N(0,1)$. Moreover, it is easy to check that 
\[
\E\abs{Z_p}^{p}=1 \text{ for all }p>0.
\]
One can easily verify that $\abs{Z_p}^{p}/p$ is distributed as a Gamma random variable with shape parameter $1/p$ and scale parameter $1$, enabling us to compute all of its polynomial moments; in particular, the variance of $Z_p$ can be explicitly written as 
\begin{align*}
\nu_{p}^{2}:=\var(Z_{p})= \E Z_{p}^{2}=p^{2/p}\cdot \frac{\gC(3/p)}{\gC(1/p)} \quad\text{ for } p>0.
\end{align*}
Let the coordinates of the random vector $\mvX=(X_{1},X_{2},\ldots,X_{n})$ be $n$ i.i.d.~random variables with density $\rho_{p}$. Define 
\begin{align*}
\widehat{X}_{i}:=\frac{X_i\;\;\; }{\norms{\mvX}_{n,p}},\quad i=1,2,\ldots,n.
\end{align*}
It is well-known (see~\cite{ScheZinn, BGMN05}) that the vector $\widehat\mvX=(\widehat{X}_{1},\widehat{X}_{2},\ldots,\widehat{X}_n)$ is uniformly distributed on the $n$-dimensional $\ell^p$-sphere $S_{n,p}$. In particular, for the partition function~\eqref{def:pf}, we have
\begin{align}\label{eq:Zrhop}
\cZ_{n,p}(\gb) &=\E\exp(\gb H_n(\widehat{\mvX}))\nonumber\\ &= \int_{\dR^{n}} \exp\left(\frac{\gb n}{2}\cdot \frac{\abs{n^{-1}\sum_{i=1}^nx_i}^2}{\norms{\mvx}_{n,p}^2} - \frac{\gb}{2}\cdot \frac{n^{-1}\sum_{i=1}^{n}x_i^2}{\norms{\mvx}_{n,p}^2} \right) \prod_{i=1}^{n}\rho_{p}(x_{i})\,dx_{i}.
\end{align}

When $p\ge 1$, clearly 
\begin{align*}
\frac{\abs{n^{-1}\sum_{i=1}^{n}x_i}}{\norms{\mvx}_{n,p}}\le 1 \text{ for all } \mvx \in\dR^{n}.
\end{align*}
In particular, ${H_n(\mvgs)}\le n/2$ and for the log-partition function we get $\log \cZ_{n,p}(\gb) \le n\gb/2$ for all $\gb>0$. However, for $p\in(0,1)$ the Hamiltonian grows at the order of $n^{2/p-1}\gg n$ when finitely many of the spins are $\Theta(n^{1/p})$. The representation~\eqref{eq:Zrhop} also allows us to generalize the model further by replacing the base measure $\rho_p(x)dx$ with a general symmetric probability measure $d\rho(x)$; we postpone the discussion to Section~\ref{sec:ssm}.

By the strong law of large numbers, when $X_1, X_2,\ldots, X_n$ are i.i.d.~from $\rho_p$, we have 
\begin{align*}
\norms{\mvX}_{n,p}^{p} \text{ is concentrated around } \E|X_1|^p=1 \text{ for large } n.
\end{align*} 
Thus, motivated by the classical results by~\cite{EN78}, one expects  the critical inverse temperature to be  $\gb_{c}(p)$, where
\begin{align}\label{eq:betacp}
\gb_c(p):=\nu_{p}^{-2} =  \frac{3}{p^{2/p}}\cdot\frac{\gC(1+1/p)}{\gC(1+3/p)}
\end{align}
for $p\in [1,\infty)$. In Theorem~\ref{thm:rhop} and Theorem~\ref{thm:genrho}, we prove that $\gb_c(p)$ is indeed the critical inverse temperature for the $\ell^p$ constraint Curie--Weiss model and its generalization to the $\ell^p$ self-scaled model (defined in Section~\ref{sec:ssm}) for $p\ge 2$ (see also Subsection~\ref{ssec:critical}). Note that $\gb_c(2)$ coincides with the classical critical inverse temperature,\ie $\gb_c(\infty)=1=\gb_c(2)$. 

However, $\gb_c(p)$ converges to $3=1/\var(U([-1,1]))$ as $p\to\infty$, which is different from $\gb_{c}(\infty)=1=1/\var(U(\{+1,-1\}))$. Here, $U([-1,1])$ and $U(\{+1,-1\})$ are uniform distributions over $[-1,1]$ and $\{+1,-1\}$ respectively. This is due to the fact that $S_{n,p}$ can converge to the discrete hypercube or the continuous  $\ell^{\infty}$-sphere depending on how we take the limit $p\to\infty$. For fixed $n$, if we take the limit $p\to \infty$ on the approximate constraint $\left(\sum_{i=1}^n |x_i|^p\right)^{1/p}\approx n^{1/p}$, then the limiting constraint defines the $\ell^\infty$-sphere $\{\mvx\in\dR^n:\max_i |x_i|\approx 1\}$. On the other hand, for fixed $n$ if we take the limit $p\to\infty$ on the exact constraint $\sum_{i=1}^n |x_i|^p=n$, then we obtain the discrete hypercube 
\[
\left\{\mvx\in\dR^n:\sum_{i=1}^n\lim_{p\to\infty}|x_i|^p=n\right\}=\{\mvx\in\dR^n:|x_i|=1\text{ for all }i=1,2,\ldots,n\}.
\]
Here, we do not pursue a detailed analysis of the transition at $p=\infty$ and put it as an open question.

\subsection{Main Results}\label{ssec:mainres}

We first describe the main results before discussing the heuristics, background literature, and ideas behind the proofs. In Section~\ref{ssec:lpres}, we state our main results about the $\ell^p$ constraint model for general $p>0$. For $p\ge 2$, fluctuation results for the magnetization are similar to the classical Curie-Weiss model. However, a direct application of the existing tools does not work in this model due to the structure of the $\ell^p$ self-scaled Hamiltonian.  In Section~\ref{ssec:ghstres}, we develop the generalized Hubbard--Stratonovich (GHS) transform based on the existence of a particular additive process with independent increments. GHS transform plays the leading role in analyzing the $\ell^p$ contained model. In Section~\ref{sec:ssm}, we extend the analysis to a general class of $\ell^p$ self-scaled model in a similar spirit to~\cite{EN78}.

\subsubsection{$\ell^p$ constraint model}\label{ssec:lpres}

Our first main result is a phase transition of the Curie--Weiss model under $\ell^p$ constraint. Let $m_{n,\gb,p}(\mvgs)=\frac{1}{n}\sum_{i=1}^n \gs_i$ be the magnetization where $\mvgs$ is chosen according to the Gibbs measure~\eqref{eq:magnetization} and $\gb_c(p)$ be defined as~\eqref{eq:betacp}. The cumulant generating function of $Z_p\sim\rho_p$ is given by 
\begin{align}\label{def:psip}
\psi_p(t):=\log\E[e^{tZ_p}],\quad t\in\dR \, \text{ where }Z_p\sim \rho_p.
\end{align}

First, we have the variational representation of the limiting free energy for arbitrary $p\ge 2$.

\begin{thm}\label{thm:rhop-partition}
Let $p\in[2,\infty)$ be fixed and $\gb_c(p)$ be as defined in \eqref{eq:betacp}. For all $\gb\ge 0$, we have
\begin{align}\label{eq:thm1}
\begin{split}
\lim_{n\to\infty}&\frac1n \log \cZ_{n,p}(\gb)\\
&=\sup_{z,w\ge 0}\left\{\psi_{p}\left(\sqrt{\gb}{zw}{(1+z^p)^{-1/p}}\right)-\frac{1}{p}\log(1+z^p)-\frac{p-2}{2p}w^{2p/(p-2)}\right\}\\
&= \sup_{0\le u< 1,w\ge 0 }\left\{\psi_{p}\left(\sqrt{\gb}\cdot uw  \right)+\frac{1}{p}\log(1-u^{p})-\frac{p-2}{2p} w^{2p/(p-2)}\right\}. 
\end{split}
\end{align}
\end{thm}

In Theorem~\ref{thm:genrho}, we present a general version of this result. The following result says that for $p\ge 2$, we have a phase transition of the magnetization $m_{n,\gb,p}$ at $\gb=\gb_c(p)$ similar to the classical model.

\begin{thm}\label{thm:rhop}
Let $p\in[2,\infty)$ be fixed and $\gb_c(p)$ be as defined in \eqref{eq:betacp}. 
\begin{enumeratea}
\item    
If $\gb<\gb_c(p)$, then $m_{n,\gb,p}$ is concentrated at zero and has a Gaussian limit as $n\to\infty$. More precisely, 
\[
\sqrt{n}\cdot m_{n,\gb,p}=\frac{1}{\sqrt{n}}\sum_{i=1}^n \gs_i  \implies \mathrm{N}(0,(\gb_c(p)-\gb)^{-1})
\]
as $n\to\infty$ and $\cZ_{n,p}(\gb)$ is uniformly bounded in $n$. In particular,
\[
\lim_{n\to\infty}\frac1n \log\cZ_{n,p}(\gb)=0.
\]
\item
If $\gb=\gb_c(p)$, then 
\[
n^{1/4}m_{n,\gb,p}(\mvgs) \implies A^{-1/4}\cdot Z_4,
\]
where
\begin{align*}
    A = \gb_c^2\left(\frac2{p}  + \frac{\kappa_p}{6}\right), \quad
    \kappa_p: = 3- \E Z_p^4/(\E Z_p^2)^2= 3-\Gamma(5/p)\gC(1/p)\Gamma(3/p)^{-2}\ge 0.
\end{align*}

\item
If $\gb>\gb_c(p)$, then 
\[
\lim_{n\to\infty}\frac1n \log\cZ_{n,p}(\gb)>0,
\]
and $m_{n,\gb,p}$ is concentrated at $\pm m_\ast$ where $m_\ast =\frac{w_\ast}{\sqrt{\gb}}$ and $w=w_\ast$ is the unique positive solution to the equation
\begin{align*}
\psi_p'\left(\frac{\sqrt{\gb}w\ }{(1+w^2)^{1/p}}\right)
=\frac{1}{\sqrt{\gb}}w(1+w^2)^{1/p}.
\end{align*}
\end{enumeratea}
\end{thm}
\begin{rem}
If $p=2$ and $\gb>\gb_c(2)=1$, then $\psi_2'(t)=t$ and so $w_\ast=\sqrt{\gb-1}$ and $m_\ast=\sqrt{1-1/\gb}$. However, for general $p>2$, there is no explicit formula for $m_*$. It will also be interesting to find rates of convergence for the above distributional limits.
\end{rem}

The main tool used in the proof of Theorems~\ref{thm:rhop-partition} and~\ref{thm:rhop} is the GHS transform introduced in Section~\ref{ssec:ghstres}. As an application of the GHS transform (see Lemma~\ref{lem:mgfbd}), we have for all $p\ge 2, t\in\dR$, 
$
\psi_p(t)\le \psi_2(t\nu_p) = t^2\nu_p^2/2.
$
Using Young's inequality $xy\le \frac2px^{p/2}+\frac{p-2}py^{p/(p-2)}$ for $p>2,x,y\ge0$, it is an easy exercise to see that for $\gb\le \gb_c(p)=1/\nu_p^2$, we have 
\begin{align*}
\psi_{p}\left(\sqrt{\gb}\cdot uw\right) &+\frac{1}{p}\log(1-u^{p})-\frac{p-2}{2p}w^{2p/(p-2)}\\
&\le 
\frac12u^2 w^{2}+\frac{1}{p}\log(1-u^{p})-\frac{p-2}{2p}w^{2p/(p-2)} \\
&\le \frac{1}{p}u^{p} +\frac{1}{p}\log(1-u^{p})\le 0
\end{align*}
for all $w\ge 0, u\in[0,1)$. However, it is nontrivial to show that the supremum in equation~\eqref{eq:thm1} is indeed strictly positive for $\gb>\gb_c(p)$. The proof is presented in Section~\ref{sec:lowtemp}.

The GHS transform allows us to describe the typical spin configuration under the Gibbs measure by introducing hidden random variables. 
Note that a similar analysis for typical configuration of the classical Curie-Weiss model can be found in~\cite{Pap89}.

\begin{thm}\label{thm:typical}
Let $\gb<\gb_c$ and $p\ge 2$. Let $(X_1,X_{2,}\ldots, X_n)$ be random variables under the Gibbs measure 
\begin{align}\label{eq:Gibbsmod}
\frac{1}{\wh{\cZ}_{n,p}(\gb)}
\exp\left(\frac{\gb n}{2}\cdot \frac{\abs{n^{-1}\sum_{i=1}^nx_i}^2}{\norms{\mvx}_{n,p}^2} \right) \prod_{i=1}^{n}\rho_{p}(x_{i})\,dx_{i}
\end{align}
where $\wh{\cZ}_{n,p}(\gb)$ is the normalizing constant. Then, there exist random variables $(Z^{(n)}, U^{(n)})$ such that conditioned on $(Z^{(n)},U^{(n)})=(z,u)$, $(X_1,X_{2},\ldots,X_n)$  is an i.i.d.~sequence of random variables and 
\[
\frac1{\sqrt{n}}\sum_{i=1}^n X_i - \sqrt{\gb/\gb_c^2}\cdot zu \convd \N(0,1/\gb_{c}).
\]
Furthermore, $Z^{(n)}U^{(n)}\convd \N(0,\gb_c/(\gb_c-\gb))$ as $n\to\infty$. Consequently, unconditionally, $\frac{1}{\sqrt{n}}\sum_{i=1}^n X_i \convd \N(0, 1/(\gb_c-\gb))$.
\end{thm}

\begin{rem}\label{rem:typical}
Let $\gb\le \gb_c(p)$ and $p\ge 2$. The typical behavior of $\mvgs=(\gs_1,\gs_2,\ldots,\gs_n)$ under the Gibbs measure~\eqref{eq:magnetization} and that of $\mvX=(X_1,X_2,\ldots,X_n)$ under~\eqref{eq:Gibbsmod} is not exactly the same because $\mvX$ is not scaled by the $\ell^p$ norm of $\mvX$ and the Gibbs measures are slightly different. However, one can see that their asymptotic behaviors agree for the following reasons. First, the difference between the Gibbs measures is the term 
\[
\frac{\gb}{2}\cdot \frac{n^{-1}\sum_{i=1}^{n}x_i^2}{\norms{\mvx}_{n,p}^2}
\]
in the exponent. This term is upper bounded by $\gb/2$ for $p\ge 2$ and converges to a constant almost surely as $n\to\infty$. Modulo this difference, we have $\mvgs=\mvX/\norms{\mvX}_{n,p}$. Since $\mvX/\norms{\mvX}_{n,p}$ and $\norms{\mvX}_{n,p}$ are independent and $\norms{\mvX}_{n,p}$ converges to 1 in probability, we see that the finite-dimensional marginals of $\mvgs$ and $\mvX$ (and also their magnetizations) have the same asymptotic behavior.
\end{rem}

Here, we note that the result in Theorem~\ref{thm:rhop-partition} cannot hold for $p<2$, as the right-hand side of equation~\eqref{eq:thm1} is infinity for $p<2$. However, in the $p>1$ case, we can use the theory of self-normalized processes (see~\cite{PLS}) to get the following variational representation of the limiting free energy.

\begin{prop}\label{prop:selfnormal}
For $p>1$, we have
\begin{align}\label{eq:sn}
\begin{split}
&\lim_{n\to\infty}\frac1n \log\cZ_{n,p}(\gb) \\
&\qquad=\sup_{y,c\ge 0}\inf_{t\ge 0 }\left\{ \frac{\gb y^2}{2}+ \psi_{p}\left(\frac{ct}{(1+ty)^{1/p}}\right)- \frac{1}{p}\log(1+ty)-\frac{p-1}{p}ty c^{p/(p-1)} \right\}.
\end{split}
\end{align}
\end{prop}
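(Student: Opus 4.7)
The plan is to mirror the strategy of Theorem~\ref{thm:rhop}, but instead of the GHS transform (which is natural for $p\ge 2$), we appeal directly to Cram\'er's theorem for the joint empirical averages $(\bar X_n,\bar W_n):=(n^{-1}\sum_i X_i,n^{-1}\sum_i|X_i|^p)$, together with the classical Hubbard--Stratonovich (HS) transform. This combination works throughout the full range $p>1$, since the joint log-mgf of $(X,|X|^p)$ under $\rho_p$ remains finite on its natural domain there.

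Starting from the i.i.d.~representation~\eqref{eq:Zrhop} and absorbing the $O(1)$ trace correction, the first step is to apply
\[
\exp(\gb n m_n^2/2) = \sqrt{n/(2\pi)}\int_\dR \exp(-ny^2/2 + n\sqrt{\gb}\,ym_n)\,dy,\qquad m_n:=\bar X_n/\bar W_n^{1/p},
\]
to linearize the quadratic. A direct rescaling $u=(1-p\tilde t)^{1/p}x$ in the density $\rho_p$ yields the closed form
\[
\Lambda(s,\tilde t):=\log\E[e^{sX+\tilde t|X|^p}] = -\tfrac{1}{p}\log(1-p\tilde t) + \psi_p\!\bigl(s/(1-p\tilde t)^{1/p}\bigr),\qquad \tilde t<1/p,
\]
which is convex and finite on $\dR\times(-\infty,1/p)$ for $p>1$. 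Cram\'er's theorem then gives a joint LDP with rate $I=\Lambda^*$, and Varadhan's lemma applied against the combined HS integrand produces
\[
\lim_{n\to\infty}\tfrac{1}{n}\log\cZ_{n,p}(\gb) = \sup_{a\in\dR,\,w>0,\,y\in\dR}\{-y^2/2+\sqrt{\gb}\,ya/w^{1/p}-I(a,w)\}.
\]

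To convert this into \eqref{eq:sn}, I would first carry out the sup over the HS variable at its Gaussian saddle, giving $\sup_{a,w}\{\gb a^2/(2w^{2/p})-I(a,w)\}$, and then parametrize $a=yw^{1/p}$ so the leading term becomes the claimed $\gb y^2/2$ (where the new $y$ is the candidate magnetization). Inside the Legendre representation of $I(yw^{1/p},w)$, substitute $s=c\tau$ and $\tilde t=-\tau y/p$; this gives $1-p\tilde t=1+\tau y$ and reproduces the $\psi_p(c\tau/(1+\tau y)^{1/p})-p^{-1}\log(1+\tau y)$ terms of \eqref{eq:sn}. The residual term $-(p-1)\tau yc^{p/(p-1)}/p$ arises from the Young--Fenchel identity $\sup_z\{cz-z^p/p\}=(p-1)c^{p/(p-1)}/p$ multiplied by $\tau y$, which absorbs the remaining $w$-dependence after performing the sup over $w$ against the Legendre lower bound on $I$; the outer $\sup_c\inf_\tau$ structure reflects this partial Legendre transform, with $c$ playing the role of the $X$-tilt and $\tau$ of the $|X|^p$-tilt.

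The hard part will be the technical justification rather than the algebra. Varadhan's lemma must handle the singular kernel $a/w^{1/p}$ as $w\to 0$; this is done by an exponential tightness estimate $\P(\bar W_n\le\varepsilon)\le e^{-cn}$ from Cram\'er's upper bound for the nonnegative empirical mean with $\E|X|^p=1$. The min--max swap in the reparametrization is handled by Sion's minimax theorem, exploiting the convexity of $\Lambda$ in $(s,\tilde t)$ together with the concavity on the primal side after HS. As a consistency check, in the $p=2$ case the inner infimum over $\tau$ in \eqref{eq:sn} reduces to $\log(1-y^2)/2$ independently of $c$ (since $\psi_2(r)=r^2/2$ causes the $c^2$-terms to cancel), so \eqref{eq:sn} collapses to the classical formula $\sup_y\{\gb y^2/2+\log(1-y^2)/2\}$ in agreement with Theorem~\ref{thm:rhop}.
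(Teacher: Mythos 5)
Your first half — reduce to $\widetilde{\cZ}_{n,p}$, take the joint LDP for $(\bar X_n,\bar W_n)$ from Cram\'er's theorem (the joint log-mgf $\Lambda(s,\tilde t)$ is indeed finite on $\dR\times(-\infty,1/p)$ for $p>1$, and your closed form for it is correct), and apply Varadhan with the bounded functional $F(a,w)=\gb a^2/(2w^{2/p})$, controlling $w\to0$ by the lower-tail estimate for $\bar W_n$ — is sound; it is essentially Theorem~\ref{thm:gen} specialized to $\rho_p$. This is already a different route from the paper, which instead quotes the self-normalized large deviation theorem of de la Pe\~na--Lai--Shao~\cite{PLS} to get the $\sup_c\inf_t$ formula directly for $\pr(S_n/T_n^{1/p}\ge x)$ and then passes to exponential moments by the layer-cake identity.

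The gap is in your second half, which is exactly the step the paper outsources to~\cite{PLS}: converting $\sup_{a,w}\{\gb a^2/(2w^{2/p})-I(a,w)\}$ into the $\sup_{y,c}\inf_t$ form of~\eqref{eq:sn}. Your proposed mechanism (Sion's minimax plus ``absorbing the $w$-dependence'' via Young--Fenchel) does not work as described. Writing $a=yw^{1/p}$ and $-I(yw^{1/p},w)=\inf_{s,\tilde t}\{\Lambda(s,\tilde t)-syw^{1/p}-\tilde tw\}$, the quantity to be evaluated is $\sup_{w>0}\inf_{s,\tilde t}\{\Lambda(s,\tilde t)-syw^{1/p}-\tilde tw\}$; the objective is \emph{convex} in $w$ (since $-w^{1/p}$ is convex and $-\tilde t w$ is linear), not concave, so Sion's theorem does not license the swap — and the swapped value $\inf_{s,\tilde t}\sup_{w>0}\{\cdots\}$ is actually $0$ (take $\tilde t\ge 0$ and $w\downarrow 0$, then $(s,\tilde t)=(0,0)$), whereas the correct value is strictly negative for $y>0$. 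The genuine source of the $\sup_c\inf_t$ structure is not a minimax swap at all: the constraint region $\{a\ge xw^{1/p}\}$ is non-convex, but Young's inequality writes it as the \emph{union} over $c\ge0$ of the half-planes $\{ca-\tfrac{x}{p}w\ge\tfrac{p-1}{p}xc^{p/(p-1)}\}$; the $\sup_c$ is ``infimum over a union equals infimum of infima,'' and the $\inf_t$ is one-dimensional Chernoff/Legendre duality applied to each half-plane, i.e.\ $\inf\{I(a,w):\langle(c,-x/p),(a,w)\rangle\ge\theta_c\}=\sup_{t\ge0}\{t\theta_c-\Lambda(tc,-tx/p)\}$. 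Your identification of $c$ as ``the $X$-tilt'' misreads this: $c$ indexes the Young decomposition, not a tilt. Without this decomposition step (or the citation to~\cite{PLS} that encapsulates it), the passage to~\eqref{eq:sn} is not established. A smaller point: your $p=2$ check is not quite right as stated — the inner infimum over $t$ is $-\infty$ at $c=0$, so it is not independent of $c$; only the full $\sup_c\inf_t$ collapses to $\tfrac12\log(1-y^2)$.
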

For $p\neq 2$, the right-hand side of equation~\eqref{eq:sn} in   Proposition~\ref{prop:selfnormal} can be written in a way similar to equation~\eqref{eq:thm1}. By doing a change of variables, $w=(\gb y^2)^{1/2-1/p}, z =(\gb y^2)^{1/p}c^{-1/(p-1)}, t = s\cdot z^p/y$, we get
\begin{align*}
&\lim_{n\to\infty}\frac1n \log\cZ_{n,p}(\gb) \\
&\qquad =\sup_{z,w\ge 0}\inf_{s\ge 0 }\left\{ \psi_{p}\left(\frac{\sqrt{\gb}\cdot szw}{(1+sz^p)^{1/p}}\right)- \frac{1}{p}\log(1+sz^p)-\frac{2ps-2s-p}{2p} w^{2p/(p-2)} \right\}.
\end{align*}
By taking $s=1$, we thus get for $p> 1$,
\begin{align*}
\lim_{n\to\infty}\frac1n \log\cZ_{n,p}(\gb) 
\le \sup_{z,w\ge 0}\left\{ \psi_{p}\left(\frac{\sqrt{\gb}\cdot zw}{(1+z^p)^{1/p}}\right)- \frac{1}{p}\log(1+z^p)+\frac{2-p}{2p} w^{2p/(p-2)} \right\}.
\end{align*}
For $p\in(1,2)$, we can use the GHS transform to get the following result. Since it follows from Theorem~\ref{thm:genrho} with a change of variables, we omit the proof. 

\begin{thm}\label{thm:rhop12}
Let $p\in (1,2)$ be fixed and $F_p(\gb):=\lim_{n\to\infty}\frac1n\log\cZ_{n,p}(\gb)$ for $\gb\ge0$. We have 
\[
\sup_{z\ge 0 }\left(\psi_{p}(\sqrt{y}z/(1+|z|^p)^{1/p})-\frac1p\log(1+|z|^p)\right)
=\sup_{x\ge 0}\left\{F_p(x y )-\frac{2-p}{2p}x^{\frac{p}{2-p}}\right\}.
\]
In particular, we have
\begin{align}\label{eq:limitbound}
\lim_{n\to\infty}\frac1n\log\cZ_{n,p}(\gb)
\le \inf_{w> 0 }\sup_{z\ge 0 }\left\{\psi_{p}\left(\frac{\sqrt{\gb}\cdot zw}{(1+z^p)^{1/p}}\right)-\frac1p\log(1+|z|^p)+\frac{2-p}{2p}w^{\frac{2p}{p-2}}\right\}.    
\end{align}
\end{thm}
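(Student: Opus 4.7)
Let $F_p(y):=\sup_{z\ge 0}\{\psi_p(\sqrt{y}z/(1+z^p)^{1/p})-\frac{1}{p}\log(1+z^p)\}$, so that the stated identity reads $F_p(y)=\sup_{x\ge 0}\{E_p(xy)-\frac{2-p}{2p}x^{p/(2-p)}\}$. Morally this is the Legendre-type inverse of Theorem~\ref{thm:rhop}: for $p>2$, substituting $u=w^{2}$ in~\eqref{eq:thm1} gives $E_p(\gb)=\sup_{u\ge 0}\{F_p(\gb u)-\frac{p-2}{2p}u^{p/(p-2)}\}$, so $E_p$ is a Legendre-type transform of $F_p$ with exponent $p/(p-2)>1$. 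When $p\in(1,2)$ that exponent flips sign and the direct representation degenerates; the natural analog instead expresses $F_p$ as the Legendre-type transform of $E_p$ with the positive exponent $p/(2-p)>1$, which is exactly the stated identity.

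The plan is to invoke Theorem~\ref{thm:genrho}, the extension of Theorem~\ref{thm:rhop} to generalized symmetric base measures, applied with $\rho=\rho_p$. After specializing, its variational formula produces precisely the integrand defining $F_p$ together with an auxiliary scale parameter governing the typical value of $\norms{\mvX}_{n,p}^{p}$; integrating this auxiliary parameter out by a Laplace/saddle-point step yields the penalty $\frac{2-p}{2p}x^{p/(2-p)}$, whose positive sign and exponent strictly greater than $1$ reflect the Gamma-type large deviations of $n^{-1}\sum_i|X_i|^{p}$ under $\rho_p^{\otimes n}$ for $p\in(1,2)$. Renaming this auxiliary parameter as $x$ and the scale variable as $y=\gb x$ converts the formula supplied by Theorem~\ref{thm:genrho} into the claimed identity; this is the content of the ``change of variables'' alluded to by the authors.

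The inequality~\eqref{eq:limitbound} is then a short calculation from the identity. For every $x,y\ge 0$,
\[
E_p(xy)\le F_p(y)+\frac{2-p}{2p}x^{p/(2-p)}.
\]
Choosing $x=\gb/y$ and then $y=\gb w^{2}$ with $w>0$, so that $(\gb/y)^{p/(2-p)}=w^{-2p/(2-p)}=w^{2p/(p-2)}$, gives
\[
E_p(\gb)\le F_p(\gb w^{2})+\frac{2-p}{2p}w^{2p/(p-2)}.
\]
Unfolding the definition of $F_p$ and taking the infimum over $w>0$ yields~\eqref{eq:limitbound}.

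The main obstacle is the saddle-point interchange underlying the change of variables: for $p\ge 2$ the optimum in the auxiliary scale is a true maximum that passes cleanly through Theorem~\ref{thm:genrho}, but for $p\in(1,2)$ the entropy of the scale variable acquires opposite curvature, so the optimum becomes a minimax and only a one-sided bound survives for $E_p$ itself. Verifying that the hypotheses of Theorem~\ref{thm:genrho} apply uniformly in the saddle parameter for $\rho=\rho_p$, and that the infimum over $w$ and the supremum over $z$ on the right-hand side of~\eqref{eq:limitbound} can be handled independently, is the main technical ingredient that must be made rigorous.
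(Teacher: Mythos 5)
Your proposal is correct and follows the same route the paper intends: the paper omits the proof precisely because the theorem is Theorem~\ref{thm:genrho}(c) specialized to $\rho=\rho_p$, where the substitution $x\mapsto (1+|z|^p)^{-1/p}x$ in the integral defining $\psi_{\rho_p}(u,-|z|^p/p)$ gives $\psi_{\rho_p}(u,-|z|^p/p)=\psi_p\bigl(u(1+|z|^p)^{-1/p}\bigr)-\tfrac1p\log(1+|z|^p)$, turning the two-argument formula into the stated one, and your derivation of~\eqref{eq:limitbound} from the identity (choosing $x=\gb/y$, $y=\gb w^2$) matches the paper's own deduction inside the proof of Theorem~\ref{thm:genrho}(c). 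The only refinement I would make is that the ``Laplace/saddle-point step integrating out the auxiliary scale'' you describe is part of the already-proved Theorem~\ref{thm:genrho}(c) and need not be redone here; what must be written out is just the elementary substitution above.
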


\begin{rem}
Note that the right-hand side of~\eqref{eq:limitbound} is not a convex function of $\gb$, in general, because it is defined by an infimum of convex functions. Since the limiting free energy is convex in $\gb$, the inequality~\eqref{eq:limitbound} may not be a tight bound for the limiting free energy. Proposition~\ref{prop:selfnormal} tells us that the log-partition function converges as $n\to\infty$ to a well-defined function of $\gb$. However, the limit is given by supremum and infimum over three variables, so the variational formula does not give us much information about the phase transition and the critical temperature. It is open to find a variational formula for the limiting free energy similar to the $p\ge 2$ case and the critical temperature for $1<p<2$. We believe that similar to the $p\ge2$ case, a phase transition happens in the $p\in(1,2)$ case at $\gb=\gb_c(p)$.
\end{rem}

When $0<p<1$, the log-partition function has a super-linear scaling, and the limiting free energy is an explicit function of $p$. 

\begin{thm}\label{thm:rhop2}
Define  $p_1:=0$ and $p_k:=\frac{2\log(1+1/k)}{\log(1+2/(k-1))}$ for $k\ge 2$. For $p\in (p_{k-1},p_k]$, define $\tau(p):=k^{1-2/p}\cdot (k-1)$. For $p\in(0,1)$, we have
\[
\lim_{n\to\infty}n^{1-2/p}\log \cZ_{n,p}(\gb) = \frac{\gb}{2}\tau(p).
\]
\end{thm}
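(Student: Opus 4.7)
The plan is to combine a deterministic upper bound on $\sup_{\mvgs\in S_{n,p}}H_{n}(\mvgs)$ with a matching probabilistic lower bound on $\cZ_{n,p}(\gb)$ coming from near-maximizing configurations. The super-linear exponent $n^{2/p-1}$ reflects that for $p<1$ the $\ell^{p}$-sphere permits ``spiky'' configurations in which a fixed number $k$ of coordinates each carry mass of order $n^{1/p}$.

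For the upper bound, note first that $H_{n}(\mvgs)\le H_{n}(|\mvgs|)$ with $|\mvgs|\in S_{n,p}$, so one may restrict to $\mvgs\ge 0$. By compactness the maximum is attained, and Lagrange multipliers give $S-\gs_{i}=n\lambda p\,\gs_{i}^{p-1}$ at every positive component $\gs_{i}$, where $S=\sum_{j}\gs_{j}$. For $p<1$ and $\lambda>0$ the function $x\mapsto x+n\lambda p\,x^{p-1}$ is $U$-shaped on $(0,\infty)$, so its level sets contain at most two points; a second-variation (or direct perturbation) argument along the constraint rules out two-valued critical points as local maxima. Hence the only candidate maxima are the ``$k$-spikes'' with $\gs_{i}=(n/k)^{1/p}$ on a support of size $k$ and zero elsewhere, giving $H_{n}=\tfrac{n^{2/p-1}}{2}f(k)$ where $f(k):=(k-1)k^{1-2/p}$. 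A direct computation of $f(k+1)/f(k)$ gives $f(k)\ge f(k+1)$ iff $p\le 2\log(1+1/k)/\log(1+2/(k-1))=p_{k}$, so the integer maximizer of $f$ is $k$ precisely when $p\in(p_{k-1},p_{k}]$. Thus $\sup_{S_{n,p}}H_{n}=\tfrac{\tau(p)}{2}n^{2/p-1}$ and hence $\log\cZ_{n,p}(\gb)\le\tfrac{\gb\tau(p)}{2}n^{2/p-1}$.

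For the lower bound I use the representation $\cZ_{n,p}(\gb)=\E\exp(\gb H_{n}(\widehat{\mvX}))$ with $\mvX=(X_{1},\dots,X_{n})$ i.i.d.\ from $\rho_{p}$, and restrict to the event
\[
A_{n}:=\bigl\{|X_{i}-(n/k)^{1/p}|\le\varepsilon(n/k)^{1/p}\text{ for }i\le k;\ |X_{i}|\le\varepsilon\text{ for }i>k\bigr\},
\]
where $k$ is the optimal integer for the given $p$. On $A_{n}$ we have $\norms{\mvX}_{n,p}^{p}=1+O(\varepsilon)$ and $\widehat{\mvX}$ is uniformly close to the $k$-spike, so $H_{n}(\widehat{\mvX})\ge(1-\varepsilon')\tfrac{\tau(p)}{2}n^{2/p-1}$. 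A direct product estimate using $\rho_{p}((n/k)^{1/p})=c_{p}e^{-n/(kp)}$ yields $\log\P(A_{n})\ge -C(\varepsilon,p)\,n$. Since $n^{2/p-1}/n\to\infty$ for $p<1$, the Boltzmann factor dominates the entropy cost, giving
\[
\liminf_{n\to\infty}n^{1-2/p}\log\cZ_{n,p}(\gb)\ge(1-\varepsilon')\tfrac{\gb\tau(p)}{2};
\]
letting $\varepsilon\to 0$ completes the argument.

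The principal obstacle is the deterministic optimization step: because $S_{n,p}$ is non-convex for $p<1$, Lagrange analysis admits two-valued critical configurations, and one must rule these out as global maxima via a careful perturbation along the constraint surface or a face-by-face induction on the support size. Once this is in hand, identifying the thresholds $p_{k}$ is immediate from the monotonicity of $f(k+1)/f(k)$ in $p$, and the entropy-versus-energy comparison for the lower bound is routine given $n^{2/p-1}\gg n$.
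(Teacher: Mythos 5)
Your strategy is essentially the paper's: the paper proves Theorem~\ref{thm:rhop2} as a special case of Theorem~\ref{thm:genrho2}, whose proof consists of exactly your two halves --- a deterministic optimization of $\sum_{i<j}x_ix_j$ over the $\ell^p$-sphere showing the maximizers are $k$-spikes with the same $p_k$ thresholds, and a lower bound from a spike event whose log-probability is $O(n)=o(n^{2/p-1})$ (your event $A_n$ is the paper's event $A$ under Assumption~(1)). Both halves of your argument are sound, and the energy-versus-entropy comparison is handled exactly as you describe.

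The one substantive step you assert but do not execute --- ruling out two-valued critical configurations as maxima, which you correctly flag as the principal obstacle --- is precisely where the paper does the work. Rather than a second-variation argument, the paper homogenizes: for a configuration taking value $a$ with multiplicity $k$ and $b$ with multiplicity $l$, the normalized objective depends only on $t=b/a\in(0,1)$ through
\[
h_{k,l}(t)=\log\bigl(k(k-1)+2klt+l(l-1)t^2\bigr)-\tfrac{2}{p}\log(k+lt^p),
\]
and $h_{k,l}'(t)$ has the sign of $u_{k,l}(t)=k+(l-1)t-(k-1)t^{p-1}-lt^p$, which is concave with $u_{k,l}(1)=0$; hence $u_{k,l}$ has at most one zero $t_0$ in $(0,1)$, $h_{k,l}$ is decreasing then increasing there, and the supremum over $t\in[0,1]$ is attained only at the endpoints, i.e.\ at one-valued configurations. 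If you want to complete your write-up you should either reproduce this one-variable calculus or genuinely carry out the perturbation argument; as written, that step is a gap, though a fillable one, and everything else in your proposal matches the paper.
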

The result follows from Theorem~\ref{thm:genrho2} as a particular case with $X\sim\rho_p$. 

\subsubsection{Generalized Hubbard--Stratonovich transform}\label{ssec:ghstres}

The following result plays a crucial role in analyzing the $\ell_{p}$ constraint Curie--Weiss model. In the classical Curie--Weiss model, the phase transition and the Gaussian fluctuation can be investigated by the Hubbard--Stratonovich transform, which enables us to linearize the Hamiltonian (see Section~\ref{ssec:classicCW}). It turned out that we can use the following generalized version of the Hubbard--Stratonovich (GHS) transform to linearize the $\ell^p$ self-scaled Hamiltonian in our model.

\begin{thm}[GHS transform]\label{thm:ghst}
For any $p\ge 2$, there exists a positive r.v.~$U=U_{p,2}$, such that for all $x\in\dR,y>0$ we have
\[
y^{-1/p}\exp\left(x^2y^{-2/p}\right)
= c_{p} \cdot\E_U \int_\dR e^{\sqrt{2}xzU - y\abs{z}^p/p}\,dz
\]
where $c_{p}=\frac{p^{-1/p}}{2\gC(1+1/p)}$. 
\end{thm}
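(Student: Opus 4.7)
The plan is to reduce the identity to a multiplicative decomposition of the standard Gaussian, then build $U$ using the fact that $e^{-Cy^\alpha}$ is the Laplace transform of a scaled positive $\alpha$-stable law for $\alpha\in(0,1]$. Substituting $z = w\,y^{-1/p}$ inside the integral on the right and using the MGF identity $c_p\int_\dR e^{tw - |w|^p/p}\,dw = \E[e^{tW}] = e^{\psi_p(t)}$ for $W\sim\rho_p$, the right-hand side becomes $y^{-1/p}\,\E\bigl[\exp(\sqrt{2}\,t\,UW)\bigr]$ with $t := x y^{-1/p}$. Matching with the left-hand side reduces the theorem to the distributional identity
\[
UW \stackrel{d}{=} \N(0,1), \qquad U>0 \text{ independent of } W\sim\rho_p,
\]
equivalently the MGF equality $\E[e^{\sqrt{2}\,tUW}] = e^{t^2}$ for all $t\in\dR$. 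The task is therefore to construct such a positive r.v.~$U$.

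To construct $U$, I would use the density convolution $f_{UW}(z) = \int_0^\infty u^{-1} f_U(u)\,f_{|W|}(z/u)\,du$ for $z>0$. Substituting $v = u^{-p}$ and setting $y = z^p/p$ converts the target identity $f_{UW}(z) = f_{|N|}(z)$ into the Laplace-transform identity
\[
e^{-(py)^{2/p}/2} \;=\; \E\bigl[e^{-yV}\bigr], \qquad y>0,
\]
where $V$ is a positive r.v.\ related to $U$ by $f_U(u) = \tfrac{p}{c_p\sqrt{2\pi}}\,u^{-p}f_V(u^{-p})$; equivalently, $U$ is the size-biased transform of $V^{-1/p}$. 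Such a $V$ exists precisely because $e^{-Cy^{2/p}}$ with $C = p^{2/p}/2$ is the Laplace transform of a scaled positive $(2/p)$-stable law whenever $2/p\in(0,1]$, i.e., whenever $p\ge 2$. When $p=2$, this stable law degenerates to $V\equiv 1$, in which case $U\equiv 1$ and we recover the classical Hubbard--Stratonovich transform.

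The remaining bookkeeping amounts to showing $\int_0^\infty f_U(u)\,du = 1$, which reduces to the Gamma-integral identity $\E[V^{-1/p}] = c_p\sqrt{2\pi}$ (evaluated by writing $V^{-1/p} = \Gamma(1/p)^{-1}\int_0^\infty s^{1/p-1}e^{-sV}\,ds$, inserting the Laplace transform of $V$, and computing), together with verifying $UW\stackrel{d}{=}\N(0,1)$ by running the substitution $v=u^{-p}$ backwards. The crux of the argument — and the main obstacle — is the complete monotonicity of $e^{-Cy^{2/p}}$, which is exactly where the hypothesis $p\ge 2$ is used: for $p<2$ the exponent $2/p$ exceeds $1$, no such $V$ exists, and the GHS identity must fail.
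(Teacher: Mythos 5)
Your proof is correct, but it reaches the key existence result by a genuinely different route than the paper. The reduction to the distributional identity $U\cdot W\equald \N(0,1)$ with $U>0$ independent of $W\sim\rho_p$ (via the substitution $z\mapsto y^{-1/p}z$ and the Gaussian MGF) is exactly the paper's first step. Where you diverge is in constructing $U$: the paper invokes Proposition~\ref{prop:decomp}, which rests on Theorem~\ref{thm:lgamma} — an additive process $(Y_t)$ with $Y_t\equald t\log X_t$, $X_t\sim\textsc{Gamma}(t,1)$, built from marked Poisson point processes and infinite products of Gamma factors — so that $U_{2,p}$ arises as $\exp(Y_{1/2}-Y_{1/p})$ and its density comes out of a Mellin/characteristic-function computation. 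You instead use Bernstein's theorem: $e^{-(py)^{2/p}/2}$ is completely monotone exactly when $2/p\le 1$, hence is the Laplace transform of a scaled positive $(2/p)$-stable variable $V$, and $U$ is the normalized size-biased version of $V^{-1/p}$. I checked your bookkeeping: the relation $f_U(u)=\tfrac{p}{c_p\sqrt{2\pi}}u^{-p}f_V(u^{-p})$ is right, and the normalization indeed reduces to $\E[V^{-1/p}]=p^{1-1/p}\sqrt{\pi/2}/\gC(1/p)=c_p\sqrt{2\pi}$, which the Gamma integral you describe delivers. The trade-off is this: your route is shorter, rests on a classical fact about stable subordinators, and transparently explains why $p\ge 2$ is the right hypothesis; the paper's route is heavier machinery but buys more — a single process coupling all the factors $U_{p,q}$ for $0<q<p$ simultaneously, and, crucially, the explicit tail asymptotic $\log\theta_{2,p}(x)\sim-\tfrac{p-2}{2p}x^{2p/(p-2)}$ of Proposition~\ref{prop:decomp}, which is used essentially in the proofs of Theorem~\ref{thm:genrho}(a) and (c). Your $U$ is of course the same random variable, and the same asymptotics could be extracted from known estimates for positive stable densities near the origin, but that extraction would be an additional step your argument does not supply.
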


\begin{cor}
In particular, for $(\go_i,\nu_i)_{i\ge 1}$ i.i.d.~random vectors with $\nu_1>0$ a.s.~and $p\ge 2$, we have
\begin{align*}
&\E\left[\biggl(\frac1n\sum_{i=1}^n \nu_i\biggr)^{-1/p} \exp\left(\frac{\gb }{2n}\biggl(\sum_{i=1}^n \go_i\biggr)^2\biggl(\frac1n\sum_{i=1}^n \nu_i\biggr)^{-2/p}\right)\right]\\
&\qquad\qquad = c_{p}\cdot\E_U \int_\dR \left(\E \exp\left(\sqrt{\frac{\gb}{n}}\cdot\go_{1} z U - \frac{\nu_1}n\cdot \frac1p\abs{z}^p\right)\right)^n \,dz.
\end{align*}
\end{cor}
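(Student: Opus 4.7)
The corollary should follow almost immediately from Theorem~\ref{thm:ghst} by substituting random values of $x$ and $y$ and then invoking the i.i.d.\ structure of $(\omega_i,\nu_i)$. I would proceed as follows.

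First, condition on the sample $(\omega_i,\nu_i)_{i=1}^n$ and apply Theorem~\ref{thm:ghst} with the pointwise choices
\[
x := \sqrt{\frac{\beta}{2n}}\sum_{i=1}^n \omega_i, \qquad y := \frac{1}{n}\sum_{i=1}^n \nu_i > 0 \text{ a.s.}
\]
Then $x^2 y^{-2/p} = \frac{\beta}{2n}\bigl(\sum_i\omega_i\bigr)^2\bigl(\frac{1}{n}\sum_i\nu_i\bigr)^{-2/p}$ and $y^{-1/p}=\bigl(\frac{1}{n}\sum_i\nu_i\bigr)^{-1/p}$, so the integrand on the left of Theorem~\ref{thm:ghst} matches exactly the integrand appearing inside the outer expectation in the statement.

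Second, simplify the linearized exponent on the right. Since $\sqrt{2}\cdot\sqrt{\beta/(2n)}=\sqrt{\beta/n}$, the Theorem gives, conditional on $(\omega_i,\nu_i)$,
\[
\bigl(\tfrac{1}{n}{\textstyle\sum}\nu_i\bigr)^{-1/p}\exp\!\Bigl(x^2 y^{-2/p}\Bigr)
= c_p\,\E_U\!\int_\dR \exp\!\Bigl(\sqrt{\tfrac{\beta}{n}}\,zU\,{\textstyle\sum_{i=1}^n}\omega_i - \tfrac{|z|^p}{p}\cdot \tfrac{1}{n}{\textstyle\sum_{i=1}^n}\nu_i\Bigr)dz.
\]
The key observation is that the exponent on the right is a sum over $i$, so the integrand factorizes as
\[
\prod_{i=1}^n \exp\!\Bigl(\sqrt{\tfrac{\beta}{n}}\,\omega_i z U - \tfrac{\nu_i}{n}\cdot\tfrac{|z|^p}{p}\Bigr).
\]

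Third, take the outer expectation over $(\omega_i,\nu_i)_{i=1}^n$ and swap it with $\E_U\int_\dR(\cdot)\,dz$ by Fubini--Tonelli: all quantities are non-negative, and the inner integrand is integrable by Theorem~\ref{thm:ghst}. Then, since the $(\omega_i,\nu_i)$ are i.i.d., we have, for each fixed $z$ and $U$,
\[
\E\prod_{i=1}^n \exp\!\Bigl(\sqrt{\tfrac{\beta}{n}}\,\omega_i z U - \tfrac{\nu_i}{n}\cdot\tfrac{|z|^p}{p}\Bigr)
= \Bigl(\E\exp\!\Bigl(\sqrt{\tfrac{\beta}{n}}\,\omega_1 z U - \tfrac{\nu_1}{n}\cdot\tfrac{|z|^p}{p}\Bigr)\Bigr)^n,
\]
which yields exactly the right-hand side of the corollary. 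There is essentially no obstacle beyond a routine Fubini check; everything is a direct consequence of Theorem~\ref{thm:ghst} and the i.i.d.\ structure.
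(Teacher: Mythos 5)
Your proposal is correct and is precisely the argument the paper intends (the corollary is stated as an immediate consequence of Theorem~\ref{thm:ghst}, and the paper applies it in exactly this way, e.g.\ with $\go_i=X_i$, $\nu_i=|X_i|^p$ in the proof of Theorem~\ref{thm:rhop}): apply the transform pointwise with $x=\sqrt{\gb/2n}\sum_i\go_i$, $y=\frac1n\sum_i\nu_i$, factor the exponent over $i$, and use Tonelli plus independence to pull the expectation inside as an $n$-th power.
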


The key ingredient of the GHS transform is the existence of a positive random variable $U$ independent of $X\sim\rho_p$, $p\ge 2$, satisfying $X\cdot U\sim \mathrm{N}(0,1)=\rho_2$. Indeed, we construct a stochastic process $U_{p,q}$ indexed by $0<q<p$ such that $Z_p\cdot U_{p,q}\sim \rho_q$ where $Z_p\sim \rho_p$. The proof is based on the following construction of an additive process.

\begin{thm}\label{thm:lgamma}
There exists an additive process $(Y_t)_{t>0}$ with independent increments such that $Y_t\equald t\log X_t$ and $X_t\sim\textsc{Gamma}(t,1)$ for all $t>0$. Moreover, $Y_0\sim -\textsc{Exp}(1)$. 
\end{thm}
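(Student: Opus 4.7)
The plan is to construct $(Y_t)_{t>0}$ via its L\'evy--Khintchine triplet: if I can exhibit, for each $t>0$, a drift $\alpha_t$ and a L\'evy measure $\nu_t$ that (i) generate the prescribed marginal characteristic function and (ii) satisfy $\nu_s\le\nu_t$ as measures whenever $s\le t$, then the standard existence theorem for additive processes (applied to the increment triplets $(\alpha_t-\alpha_s,\nu_t-\nu_s)$) produces the desired process.

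The first step is to compute the target characteristic function. Using $\E X_t^{s}=\gC(t+s)/\gC(t)$ for $X_t\sim\textsc{Gamma}(t,1)$ gives
\[
\phi_{Y_t}(\theta):=\E\bigl[e^{i\theta Y_t}\bigr]=\E\bigl[X_t^{it\theta}\bigr]=\frac{\gC(t(1+i\theta))}{\gC(t)}.
\]
To convert this into L\'evy--Khintchine form, I would apply Malmsten's representation
\[
\log\gC(z)=\int_{0}^{\infty}\!\Bigl((z-1)e^{-u}-\frac{e^{-u}-e^{-zu}}{1-e^{-u}}\Bigr)\frac{du}{u},\qquad\mathrm{Re}\,z>0,
\]
to $z=t(1+i\theta)$ and $z=t$, subtract, and change variables $x=-tu$ on $(-\infty,0)$. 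After recognizing the cancellation of the two $it\theta/u$ singularities at $u=0$, this produces
\[
\log\phi_{Y_t}(\theta)=i\theta\,\alpha_t+\int_{-\infty}^{0}\!\bigl(e^{i\theta x}-1-i\theta x\bigr)\,\nu_t(dx),\qquad\nu_t(dx)=\frac{e^{x}\,dx}{(-x)(1-e^{x/t})},
\]
with an explicit drift $\alpha_t$; the full $-i\theta x$ compensation (with no truncation) is permitted because $\nu_t$ has exponential tails at $-\infty$.

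Next I would verify the analytic properties needed. Validity of $\nu_t$: near $x=0^-$ the density behaves like $t/x^2$, so $\int(1\wedge x^2)\,\nu_t(dx)<\infty$, while at $-\infty$ it decays like $e^{x}/|x|$, so $\int_{|x|>1}|x|\,\nu_t(dx)<\infty$. Monotonicity in $t$: for $0<s<t$ and $x<0$ one has $x/t>x/s$, hence $e^{x/t}>e^{x/s}$ and $1-e^{x/t}<1-e^{x/s}$, so the density $\nu_t(dx)/dx$ is pointwise increasing in $t$ and $\nu_t-\nu_s\ge 0$ as a measure. The drift $\alpha_t$ is continuous in $t$ by dominated convergence applied to the Malmsten kernel. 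Together with the identity above, these facts let the additive-process existence theorem produce $(Y_t)_{t>0}$ with independent increments, stochastic continuity, and the prescribed marginals.

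Finally, the ``moreover'' statement follows from $\gC(z)\sim 1/z$ as $z\to 0$: as $t\downarrow 0$,
\[
\phi_{Y_t}(\theta)=\frac{\gC(t(1+i\theta))}{\gC(t)}\longrightarrow\frac{1}{1+i\theta},
\]
which is the characteristic function of $-\textsc{Exp}(1)$; L\'evy's continuity theorem then yields $Y_t\Rightarrow Y_0\sim -\textsc{Exp}(1)$, and the process can be extended to include this initial distribution. I expect the main obstacle to be the bookkeeping in the L\'evy--Khintchine identification: the two divergent $1/u$ singularities in the Malmsten integrand exactly cancel, but organizing the remainder into a bona fide triplet with a definite truncation convention and producing a clean, manifestly continuous expression for $\alpha_t$ requires careful splitting. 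The monotonicity check is essentially elementary, and the standard additive-process construction supplies the path-level existence once the triplet is in hand.
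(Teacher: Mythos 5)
Your proposal is correct, but it reaches the conclusion by a genuinely different route from the paper. The paper builds the process explicitly: for each $k\ge1$ it takes an independent marked Poisson process $V_t^{(k)}$ with characteristic function $(1-i\xi\, t/(k+t))^{-1}$, centers, sums over $k$, identifies the product of characteristic functions with $\gC((1-i\xi)t)/\gC(t)$ via the Weierstrass product, and finally adds an independent $\textsc{Exp}(1)$ variable to absorb the leftover factor $(1-i\xi)e^{i\xi}$ --- this last step is also what makes $Y_0\sim-\textsc{Exp}(1)$ immediate and gives strict positivity of the increment densities, which the paper later uses in Proposition~\ref{prop:decomp}. You instead read off the time-dependent L\'evy triplet in one stroke from Malmsten's formula, obtaining the aggregated L\'evy measure $\nu_t(dx)=\frac{e^x\,dx}{(-x)(1-e^{x/t})}$ on $(-\infty,0)$ with drift $\alpha_t$ (which, after the cancellation you describe, comes out cleanly as $\alpha_t=t\,\gC'(t)/\gC(t)=\E[t\log X_t]$ by the standard integral representation of the digamma function), check pointwise monotonicity of the density in $t$, and invoke Sato's existence theorem for additive processes from a consistent, continuous family of triplets --- essentially the same theorem the paper cites, but applied to triplets rather than to a system of increment laws. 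The two constructions are in fact two decompositions of the same object: expanding $\frac{1}{1-e^{x/t}}=\sum_{j\ge0}e^{jx/t}$ splits your $\nu_t$ into the L\'evy measures of the paper's $V^{(k)}$'s plus the $\textsc{Exp}(1)$ term. What your route buys is a closed-form L\'evy measure and a one-line monotonicity check; what it costs is the careful bookkeeping of the two divergent $1/u$ terms (note that $\int_{|x|\le1}|x|\,\nu_t(dx)=\infty$, so the fully compensated integrand $e^{i\theta x}-1-i\theta x$ must be kept together and cannot be split), plus the small additional remark that $t\mapsto\nu_t(B)$ is continuous for $B$ bounded away from $0$, which is needed for stochastic continuity and follows by dominated convergence. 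Your treatment of $Y_0$ via $\gC(z)\sim1/z$ and L\'evy's continuity theorem is consistent with the triplet picture, since $\nu_t\downarrow\frac{e^x}{-x}\,dx$ and $\alpha_t\to-1$ as $t\downarrow0$, which is exactly the triplet of $-\textsc{Exp}(1)$.
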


By Theorem~\ref{thm:lgamma} and letting $U_{p,q} \equald \exp(Y_{1/q}-Y_{1/p})$, one can show the following.

\begin{prop}\label{prop:decomp}
Let $Z_p\sim \rho_{p}$ for $p>0$. For every $0<q<p$, there exists a positive r.v.~$U=U_{p,q}$ independent of $Z_{p}$ such that 
\[
Z_{p}\cdot U_{p,q} \equald Z_{q}.
\]
Moreover, $U_{p,q}$ has a strictly positive density $\theta=\theta_{p,q}$ on $(0,\infty)$ which satisfies
\begin{align*}
g(x):=\frac{\theta(x)}{x^{\ga/2}\exp(-x^{\ga}/\ga)}\to C_{p,q}:= \frac{p c_q}{c_p\sqrt{2\pi(p-q)}} \text{ as $x\to\infty$},
\end{align*}
where $\ga=pq/(p-q)$.
\end{prop}

\begin{rem}
In the case of $p=4$, one can explicitly compute the density of $U_{4,2}$ such that $ Z_{4}\cdot U_{4,2}\sim \N(0,1)$, where $Z_{4}\sim \rho_{4}$ with 
$
\rho_{4}(x) = \sqrt{2} \exp(-x^{4}/4)/\gC(1/4), x\in\dR.
$
Let $\theta_{4,2}$ be the density given by
$
\theta_{4,2}(u):=\sqrt{2}u^2 \exp(-u^{4}/4)/\gC(3/4), u>0.
$
One can check that if $\theta_{4,2}$ is the density of  $U_{4,2}$ and $U_{4,2}\perp Z_{4}$, then $U_{4,2}\cdot Z_{4}\sim \N(0,1)$. 
\end{rem}

Proposition~\ref{prop:decomp} says that a random variable $Z_q$ can be written as a product of two independent random variables. It is well-known (see~\cite{Duf10} for example) that there are various identities regarding families of  Gamma and Beta random variables, for example,
\[
G^{(a)}+G^{(b)}=G^{(a+b)},\qquad
B^{(a,b)}G^{(a+b)}=G^{(a)}
\]
where $G^{(a)}$ is a Gamma random variable with the shape parameter $a$ and the scale parameter 1, and $B^{(a,b)}$ is a Beta random variable with parameters $a,b$. This gives rise to the so-called beta-gamma algebra. Since the $p$-th power of $Z_p\sim\rho_p$ is Gamma distributed, the proposition can be understood as a variant of the beta-gamma algebra for the powers of Gamma distributions.

For general $p>q>0$, using Mellin transform and the facts that $\abs{Z_{p}}^p/p\sim \text{Gamma}(1/p,1)$, $\abs{Z_p}\cdot U_{p,q}\equald \abs{Z_q}$, one can check that the random variable $p^{1/p}q^{-1/q}\cdot U_{p,q}$ has density given by
\[
\frac{1}{2\pi \i}\int_{1-\i\infty} ^{1+\i\infty}x^{-z}\cdot  \frac{\gC(1/p)}{\gC(1/q)} \frac{\gC(z/q)}{\gC(z/p)}\,dz,\text{ for } x>0.
\]
Note that, for $a>0$, we have
\[
\abs{\gC(a+\i b)}/\gC(a) = \prod_{k=0}^{\infty}\left(1+b^{2}/(a+k)^2\right)^{-1/2}\approx \exp(-b^{2}/2a),
\]
which implies that the density is well-defined for $p>q$. The asymptotic behavior of the density also follows from the relation $\abs{Z_{p}}\cdot U_{p,q}\equald \abs{Z_{q}}$.  Using the density formula for the product of two independent rvs, we get for  any $x>0$,
\begin{align*}
{c_q}/{c_p}
&
=\int_0^\infty \exp\left({x^q}/{q}-{(x/y)^p}/{p}\right)\cdot {\theta(y)}/{y}\cdot dy\\
&= \int_0^\infty y^{\ga/2-1}\exp\left(x^q/q-(x/y)^p/p-y^{\ga}/\ga\right)\cdot g(y)\, dy
\end{align*}
where $\ga=pq/(p-q)$. Doing a  change of variable $y=x^{(p-q)/p}\cdot (1+tx^{-q/2}), x=\eps^{-2/q}$, we get for all $\eps>0$,
\begin{align*}
\int_{-1/\eps}^\infty \exp(-\eps^{-2}h(t\eps))\cdot 
(1+t\eps)^{\ga/2-1}g\left(\eps^{-2/\ga}(1+t\eps)\right)
\, dt = {c_q}/{c_p}
\end{align*}
where 
$
h(u):=(1+u)^{-p}/p+(1+u)^{\ga}/\ga-1/q.
$
Note that,  $h(0)=h'(0)=0$ and $h''(u)\ge h''(0)=p^2/(p-q)$ for all $u\ge 0$. Thus, we have
\begin{align*}
2u^{-2}h(u)\ge \lim_{u\to 0}2u^{-2}h(u)={p^2}/(p-q).
\end{align*}
Using DCT with $\eps\downarrow 0$ and the Gaussian integral formula, thus we expect that
\[
\sqrt{2\pi (p-q)/p^{2}}\cdot \lim_{x\to\infty} g(x)=c_{q}/c_{p}.
\]
A rigorous argument is presented in Section~\ref{sec:density} in the proof of Proposition~\ref{prop:decomp}.

In the proofs of the main results, we mainly used the existence of a random variable $U_{p,2}$ independent of $Z_{p}$ in Proposition~\ref{prop:decomp} that satisfies $Z_{p}\cdot U_{p,2}\sim \N(0,1)$, whereas we indeed constructed a stochastic process $U_{p,q}$  in $p$ for fixed $q$  in Theorem~\ref{thm:lgamma}. A geometric interpretation and other possible applications of this stochastic process are of independent interest.

\subsubsection{The $\ell^p$ self-scaled model}\label{sec:ssm}

One can generalize the $\ell^p$ constraint model to the $\ell^p$ self-scaled model. As we have seen, the $\ell^p$ constraint Curie--Weiss model  can be written in terms of self-normalized process $\widehat\mvX=(\widehat{X}_{1},\widehat{X}_{2},\ldots,\widehat{X}_n)$ where
\begin{align*}
\widehat{X}_{i}:=\frac{X_i\;\;\; }{\norms{\mvX}_{n,p}},\quad i=1,2,\ldots,n,
\end{align*}
and the random vector $\mvX=(X_{1},X_{2},\ldots,X_{n})$ be $n$ i.i.d.~random variables from $\rho_{p}$. Motivated by~\eqref{eq:Zrhop}, one can consider $\ell^p$ self-scaled model with a symmetric base measure $\rho$, defined by
\begin{align*}
\cZ_{n,p}(\gb;\rho) 
&:=\E\left[\frac{\gb}{n}\cdot \frac{\sum_{i<j}X_i X_j }{\norms{\mvX}^2_{n,p}}\right]\\
&= \int_{\dR^{n}} \exp\left(\frac{\gb n}{2}\cdot \frac{\abs{n^{-1}\sum_{i=1}^nx_i}^2}{\norms{\mvx}_{n,p}^2} - \frac{\gb}{2}\cdot \frac{n^{-1}\sum_{i=1}^{n}x_i^2}{\norms{\mvx}_{n,p}^2} \right) \prod_{i=1}^{n}d\rho(x_{i}).
\end{align*}

The above $\ell^{p}$ constraint model can be generalized with respect to any symmetric base measure $\rho$ on $\dR$ with $\P(X=0)=0$, $X\sim\rho$, and finite $p$-th moment under the self-scaled Hamiltonian
\begin{align*}
H_{n,p}(\mvx):= \frac{n}{2}\cdot \frac{\abs{n^{-1}\sum_{i=1}^{n}x_i}^2}{\norms{\mvx}_{n,p}^2} - \frac{1}{2}\cdot \frac{n^{-1}\sum_{i=1}^{n}x_i^2}{\norms{\mvx}_{n,p}^2}.
\end{align*}
\begin{ass}\label{assumption}
Let $X\sim\rho$ with $\E_\rho|X|^p=1$. In what follows, we assume one of the following:
\begin{enumerateA}
    \item\label{rhoass1} $\pr(|X|>0)=1$ and $\E_\rho e^{\theta |X|^p}<\infty$ for some $\theta>0$.
    \item\label{rhoass2} $\pr(|X|>0)=1$ and $\E_\rho[|X|^{-\ga}]<\infty$ for some $\ga>0$.
\end{enumerateA}
\end{ass}

Note that Assumption~\ref{rhoass1} is stronger than Assumption~\ref{rhoass2}~and is needed only for the large deviation argument (Theorem~\ref{thm:gen}). Using the GHS transform, we obtain a simpler variational form of the limiting free energy (Theorem~\ref{thm:genrho}) only under Assumption~\ref{rhoass2}. Since the Hamiltonian is normalized in the $\ell^p$ norm, it is necessary to assume that the base measure $\rho$ does not give a mass at 0 and decays fast enough near 0. Note that for $\rho_{p}$ distribution, Assumption~\ref{rhoass2} holds for any $0<\ga<1$. For Assumption~\ref{rhoass1}, one can find the same condition in~\cite{EN78}, where the generalized Curie--Weiss model was investigated.\\
\smallskip

\noindent\textbf{Large deviation based analysis.}\quad
We write $\cZ_{n,p}(\gb;\rho)$ to denote the corresponding partition function with $\gs_i,i\in[n]$ being i.i.d.~from $\rho$. That is, 
\begin{align*}
\cZ_{n,p}(\gb;\rho) = \int_{\dR^{n}} \exp\left(\frac{\gb n}{2}\cdot \frac{\abs{n^{-1}\sum_{i=1}^nx_i}^2}{\norms{\mvx}_{n,p}^2} - \frac{\gb}{2}\cdot \frac{n^{-1}\sum_{i=1}^{n}x_i^2}{\norms{\mvx}_{n,p}^2} \right) \prod_{i=1}^{n}d\rho(x_{i}).
\end{align*}
Since
\[
\frac{n^{-1}\sum_{i=1}^{n}x_i^2}{\norms{\mvx}_{n,p}^2}
\le
\begin{cases}
1, & p\ge 2,\\
n^{\tfrac{2}{p}-1}, & 1<p<2,
\end{cases} = o(n) \text{ for } p>1,
\]
as explained in Remark~\ref{rem:typical}, it suffices to consider the following partition function
\begin{align}\label{eq:ztildegen}
\widetilde{\cZ}_{n,p}(\gb;\rho) = \int_{\dR^{n}} \exp\left(\frac{\gb n}{2}\cdot \frac{\abs{n^{-1}\sum_{i=1}^nx_i}^2}{\norms{\mvx}_{n,p}^2}  \right) \prod_{i=1}^{n}d\rho(x_{i}).
\end{align}
Let $F(x,y) = \frac{\gb}{2}x^2y^{-2/p}$ and $\cX:=\{(x,y)\in\dR^2: 0\le |x|\le y^{1/p}\}$. Let $(X_i)_{i\ge 1}$ be an i.i.d.~sequence of random variables with density $\rho$, $S_n=\sum_{i=1}^n X_i$, and $T_n=\sum_{i=1}^n |X_i|^p$. Let $Q_n$ be the joint distribution of $(S_n/n, T_n/n)$. The log Laplacian and the Cram\'er transform of $(X,|X|^p)$ are given by
\begin{align*}
\psi_{\rho}(u,v):=\log\E_\rho e^{uX+v|X|^p},\qquad I(x,y):=\sup_{u,v} (ux+vy-\psi_{\rho}(u,v)).
\end{align*}
Then, the partition function $\widetilde{\cZ}_{n,p}(\gb;\rho)$ can be written as
\[
\widetilde{\cZ}_{n,p}(\gb;\rho) = \int_{\cX} e^{nF(x,y)}Q_n(dx,dy).
\]
Let $Q_{n,F}(A) = \widetilde{\cZ}_{n,p}(\gb;\rho)^{-1}\int_A e^{nF}dQ_n$.
We refer to the book~\cite{DZbook} for detailed discussions on large deviation principles. 

\begin{thm}\label{thm:gen}
Let $X\sim\rho$ with $\E_\rho|X|^p=1$ and $p\ge 1$ be fixed. Assume that Assumption~\ref{rhoass1} holds. Then, $\{Q_{n,F}\}_{n\ge 1}$ has a large deviation property with speed $n$ and a rate function $I_F(x,y):=I(x,y)-F(x,y)$ and
\begin{align}\label{eq:ld-plimit}
\lim_{n\to\infty}\frac1n\log\cZ_{n,p}(\gb;\rho) = \sup_{0\le |x|\le y^{1/p}}\left(F(x,y)-I(x,y)\right).        
\end{align}
Furthermore, if $K$ is a closed subset in $\dR^2$ that does not contain any minimum points of $I_F$, then $Q_{n, F}(K)\le e^{-C n}$ for large $n$ and some constant $C=C(K)>0$. 
\end{thm}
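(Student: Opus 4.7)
The strategy is to combine Cram\'er's theorem in $\dR^{2}$ with Varadhan's integral lemma, after first replacing $\cZ_{n,p}(\gb;\rho)$ by the simpler partition function $\widetilde{\cZ}_{n,p}(\gb;\rho)$ from~\eqref{eq:ztildegen}; for $p>1$ the two partition functions differ by a factor $e^{o(n)}$ by the bound recorded just before~\eqref{eq:ztildegen}, so they share the same exponential rate. The central observation that makes the subsequent tilting argument clean is that on the set $\cX=\{(x,y):0\le |x|\le y^{1/p}\}$, where every $Q_{n}$ is supported by H\"older's inequality, the functional $F(x,y)=\frac{\gb}{2}x^{2}y^{-2/p}$ is bounded above by $\gb/2$, since $x^{2}\le y^{2/p}$ there.

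I would then apply Cram\'er's theorem to the i.i.d.\ sequence $(X_{i},|X_{i}|^{p})_{i\ge 1}\subset\dR^{2}$. Assumption~\ref{rhoass1}, combined with the elementary bound $|u|\cdot|X|\le |u|(1+|X|^{p})$ valid for $p\ge 1$, gives
\[
\E e^{uX+v|X|^{p}}\le e^{|u|}\,\E e^{(|u|+|v|)|X|^{p}}<\infty\qquad\text{whenever }|u|+|v|<\theta,
\]
so $\psi_{\rho}$ is finite on a neighborhood of the origin. Cram\'er's theorem then yields an LDP for $Q_{n}$ on $\dR^{2}$ with speed $n$ and good convex rate function $I$, with $I\equiv\infty$ outside $\cX$.

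Next, Varadhan's lemma applied to the bounded functional $F$ produces
\[
\lim_{n\to\infty}\frac{1}{n}\log\widetilde{\cZ}_{n,p}(\gb;\rho)=\sup_{(x,y)\in\cX}\bigl(F(x,y)-I(x,y)\bigr),
\]
which together with the $o(n)$ reduction above is precisely~\eqref{eq:ld-plimit}. The LDP for the tilted probability measures $Q_{n,F}$ with rate function $I_{F}-\inf I_{F}$ then follows by the standard Laplace--Varadhan combination: numerator and denominator in the definition of $Q_{n,F}$ each satisfy an LDP and the additive constant $\inf I_{F}$ cancels. In particular, if $K$ is a closed set disjoint from the (nonempty, compact) minimizer set of the good rate function $I_{F}$, lower semicontinuity gives $2C:=\inf_{K}I_{F}-\inf I_{F}>0$, and the LDP upper bound yields $Q_{n,F}(K)\le e^{-Cn}$ for all large enough $n$.

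The main technical care is needed at the boundary of $\cX$, in particular at the singular point $(0,0)$ where $F$ fails to be continuous. This is harmless because $F$ is uniformly bounded on $\cX$, so Varadhan's exponential tail hypothesis is trivial; moreover, Assumption~\ref{rhoass1} forces $I(0,0)=\infty$ (take $u=0$ and $v\to-\infty$ in the Legendre transform, using $|X|>0$ a.s.), so the supremum in~\eqref{eq:ld-plimit} is attained where $F$ is smooth. I do not anticipate a single hard step; the main discipline is in checking the Cram\'er--Varadhan hypotheses and handling the boundary of $\cX$ carefully.
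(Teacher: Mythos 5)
Your proposal is correct and follows essentially the same route as the paper: the paper likewise reduces to $\widetilde{\cZ}_{n,p}(\gb;\rho)$, invokes Cram\'er's theorem for $Q_n$ (Ellis, Theorem II.4.1, justified by the moment condition in Assumption~\ref{rhoass1}), applies Varadhan's lemma using that $F$ is continuous and bounded on $\cX$ so the exponential tail condition is trivial (Theorem II.7.1), and obtains the tilted-measure LDP and the concentration estimate from Theorem II.7.2. Your additional checks (finiteness of $\psi_\rho$ near the origin, $I(0,0)=\infty$, and the restriction to $p>1$ in the $e^{o(n)}$ reduction, which mirrors the paper's own display) only make explicit what the paper leaves to the cited references.
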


\noindent\textbf{Analysis using Generalized Hubbard--Stratonovich Transform.}\quad
For $X\sim\rho_p$, it is easy to check that
\begin{align*}
\sup_{0\le |x|\le y^{1/p}} &\left(\frac12\gb x^2y^{-2/p}-I(x,y)\right)\\
&=\sup_{0\le x\le y^{1/p}}\inf_{u,v}\left(\frac12\gb x^2y^{-2/p}+ \log\E_\rho e^{uX+v|X|^p}-ux-vy\right)\\
&\le \sup_{z,w\ge 0}\left( \log\E_\rho e^{\sqrt{\gb}zwX-|zX|^p/p} -\frac{p-2}{2p}\cdot w^{2p/(p-2)} \right)
\end{align*}
by taking 
\[
u = \sqrt{\gb}zw, v= -z^p/p \text{ where } x= w^{2p/(p-2)}/(\sqrt{\gb}zw), y= w^{2p/(p-2)}/z^{p}.
\]
It turns out that the equality holds under a weaker assumption on $\rho$. 

\begin{thm}\label{thm:genrho}
Let $X\sim\rho$ with $\E_\rho|X|^p=1$ and Assumption~\ref{rhoass2} holds. 
\begin{enumeratea}
\item If $p>2$, then 
\begin{align}\label{eq:htempvarp>2}
\lim_{n\to\infty}\frac1n\log\cZ_{n,p}(\gb;\rho) = \sup_{z,w\ge 0}\left\{ \psi_{\rho}(\sqrt{\gb}zw,-|z|^p/p) -\frac{p-2}{2p}\cdot w^{\frac{2p}{p-2}} \right\}.   
\end{align}
Furthermore, the limit is strictly positive if $\gb>\gb_c(\rho):=1/\E_\rho[X^2]$ and $0$ if $\gb\le 1$.
\item For $p=2$, we have
\[
\lim_{n\to\infty}\frac1n\log\cZ_{n,2}(\gb;\rho) = \sup_{z\ge 0}\psi_{\rho}(\sqrt{\gb}z, -z^2/2) .
\]
Furthermore, the limit is strictly positive for $\gb>\gb_c(\rho)$, and the limit is zero for $\gb<\gb_c(\rho)$.
\item If $1<p<2$, then 
\begin{align*}
F_p(\gb;\rho)
&:= \lim_{n\to\infty}\frac1n\log\cZ_{n,p}(\gb;\rho) \\
&=\sup_{y,c\ge 0}\inf_{t\ge 0 }\left\{ \frac{1}{2}\gb y^2+ \psi_{\rho}\left(ct, -ty/p\right)-\frac{p-1}{p}ty c^{p/(p-1)} \right\}
\end{align*}
and
\[
\sup_{z\ge 0 }\psi_{\rho}(\sqrt{y}z,-|z|^p/p)
=\sup_{x\ge 0}\left\{F_p(x y;\rho )-\frac{2-p}{2p}x^{\frac{p}{2-p}}\right\}.
\]
In particular, 
\begin{align*}
\lim_{n\to\infty}\frac1n\log\cZ_{n,p}(\gb;\rho)\le \inf_{w> 0 }\sup_{z\ge 0 }\left\{\psi_{\rho}\left({\sqrt{\gb}zw},-|z|^p/p\right)-\frac{p-2}{2p}w^{\frac{2p}{p-2}}\right\}.
\end{align*}
\end{enumeratea}
\end{thm}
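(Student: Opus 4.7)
The plan is to base the proof on the GHS identity of Theorem~\ref{thm:ghst}, applied pointwise with $x=\sqrt{\gb/(2n)}\,S_n$ and $y=T_n/n$, so that $x^{2}y^{-2/p}=nF(S_n/n,T_n/n)$ and the self-normalized quadratic is replaced by a linear exponent at the cost of a Laplace variable $z$ and the auxiliary positive random variable $U=U_{p,2}$ from Proposition~\ref{prop:decomp}. Integrating against $\rho^{\otimes n}$ and using Fubini,
\[
\cZ_n':=\E_\rho\bigl[(T_n/n)^{-1/p}\,e^{nF}\bigr]=c_p\,\E_U\!\int_\dR\phi_n(z,U)^n\,dz,\qquad \phi_n(z,u):=\E_\rho e^{\sqrt{\gb/n}\,zuX-|zX|^p/(np)}.
\]
A preliminary reduction compares $\cZ_n'$ with $\widetilde{\cZ}_{n,p}(\gb;\rho)$: the SLLN gives $T_n/n\to 1$ a.s.~(since $\E_\rho|X|^p=1$), Assumption~\ref{rhoass2} delivers $\E[(T_n/n)^{-k/p}]<\infty$ for some $k>0$, and the a priori bound $F\le \gb/2$ valid for $p\ge 2$ controls $e^{nF}$ on atypical events, so after a careful truncation on $T_n/n\in(1-\delta,1+\delta)$ one deduces $\log\cZ_n'=\log\widetilde{\cZ}_{n,p}(\gb;\rho)+o(n)$. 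Handling the event $\{T_n/n\notin(1-\delta,1+\delta)\}$ under the weak Assumption~\ref{rhoass2} (no exponential moments of $|X|^p$) is a subtle point: Markov suffices for the large side, while the negative-moment input handles the small side. It then suffices to compute $\lim n^{-1}\log\cZ_n'$.

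For part~(a) with $p>2$ the crucial move is the simultaneous scaling $z=n^{1/p}\tilde z$, $u=n^{1/2-1/p}w$, which is the unique change of variables making both $\sqrt{\gb/n}\,zu=\sqrt{\gb}\,\tilde zw$ and $|z|^p/(np)=|\tilde z|^p/p$ hold; under it
\[
\phi_n(n^{1/p}\tilde z,\,n^{1/2-1/p}w)^{n}=\exp\!\bigl(n\,\psi_\rho(\sqrt{\gb}\,\tilde zw,\,-|\tilde z|^p/p)\bigr).
\]
Writing $\E_U=\int\cdot\,\theta_{p,2}(u)\,du$ and invoking the tail estimate $\log\theta_{p,2}(u)\sim-\tfrac{p-2}{2p}u^{2p/(p-2)}$ from Proposition~\ref{prop:decomp}, the identity $(1/2-1/p)\cdot 2p/(p-2)=1$ is arranged precisely so that $\theta_{p,2}(n^{1/2-1/p}w)\asymp\exp\bigl(-n\tfrac{p-2}{2p}w^{2p/(p-2)}\bigr)$ produces the compensating penalty at the correct $n$-scale. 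A Laplace/Varadhan argument on $\iint e^{n\Psi(\tilde z,w)}d\tilde z\,dw$ with $\Psi(\tilde z,w)=\psi_\rho(\sqrt{\gb}\tilde zw,-|\tilde z|^p/p)-\tfrac{p-2}{2p}w^{2p/(p-2)}$ then yields~\eqref{eq:htempvarp>2}. Positivity of the supremum for $\gb>\gb_c(\rho)=1/\E_\rho X^2$ follows by Taylor expansion $\psi_\rho\approx\tfrac12\gb(\E X^2)\tilde z^2w^2-|\tilde z|^p/p$ near the origin and optimizing in $\tilde z$ (giving $\tilde z^{p-2}=\gb w^2\E X^2$), which produces $\Psi\approx\tfrac{p-2}{2p}w^{2p/(p-2)}[(\gb\E X^2)^{p/(p-2)}-1]$, strictly positive once $\gb\E X^2>1$; vanishing for $\gb\le 1$ uses a Young-type bound on the MGF analogous to the computation displayed right after Theorem~\ref{thm:rhop}. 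Part~(b) is the special case $p=2$, where $U\equiv 1$ by $Z_2\cdot 1\equald Z_2$ and the $w$-variable degenerates; the computation reduces to the classical one-parameter Hubbard--Stratonovich.

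For part~(c) with $1<p<2$ the GHS transform is no longer directly available, and the plan is to apply the classical ($p=2$) Hubbard--Stratonovich to linearize $|S_n|^2$, introducing one Laplace variable $y$, and to handle the self-normalized denominator $(T_n/n)^{-2/p}$ via the Gamma identity $a^{-2/p}=\gC(2/p)^{-1}\int_0^\infty t^{2/p-1}e^{-ta}\,dt$ with $a=T_n/n$, introducing the second auxiliary variable $t$; pushing the sum $\sum X_i$ through both Laplace variables yields the three-variable min--max expression for $E(\gb)$. The Legendre-type identity
\[
\sup_{z\ge 0}\psi_\rho(\sqrt{y}z,-|z|^p/p)=\sup_{x\ge 0}\bigl\{E(xy)-\tfrac{2-p}{2p}x^{p/(2-p)}\bigr\}
\]
then follows by convex conjugation, using that $u\mapsto\tfrac{p-1}{p}u^{p/(p-1)}$ and $v\mapsto\tfrac{2-p}{2p}v^{p/(2-p)}$ are Legendre dual up to rescaling. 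The principal obstacle, and the reason the theorem claims only the one-sided upper bound in the final display of~(c), is the interchange of the inner infimum in $t$ with the outer supremum in $(y,c)$: the integrand is neither jointly convex nor concave, so Sion-type minimax theorems do not apply, and equality in~\eqref{eq:limitbound} remains open, as flagged in the remark following Theorem~\ref{thm:rhop12}.
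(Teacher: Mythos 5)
For parts (a) and (b) your route is essentially the paper's: the GHS identity with $x=\sqrt{\gb/(2n)}\,S_n$, $y=T_n/n$, the simultaneous scaling $z\mapsto n^{1/p}z$, $u\mapsto n^{1/2-1/p}w$, the tail asymptotics of the density of $U_{2,p}$ from Proposition~\ref{prop:decomp}, a Laplace argument, and a second-order expansion at the origin for positivity when $\gb>\gb_c(\rho)$. The one step where you diverge — and where your sketch does not go through — is the removal of the prefactor $(T_n/n)^{-1/p}$. Under Assumption~(B) the lower tail is harmless ($\P(T_n/n<\eps)$ decays exponentially by a one-sided exponential Chebyshev bound, since $\P(X=0)=0$), but the upper tail is not: with only $\E_\rho|X|^p=1$ you get $\P(T_n/n>M)\le 1/M$ and nothing better, while $e^{nF}$ can be as large as $e^{n\gb/2}$ on that event, so a truncation of $T_n/n$ to a bounded window plus Markov cannot yield $\log\widehat{\cZ}_{n,p}=\log\widetilde{\cZ}_{n,p}+o(n)$. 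The paper never truncates: it applies H\"older with exponent $r=\gb/\widehat{\gb}>1$ to absorb the prefactor into a negative moment $\E[T_n^{-q}]$, which is finite and of polynomial size by \eqref{eq:Tnbdd}, at the cost of comparing the two partition functions at slightly different temperatures, and then lets $\widehat{\gb}\to\gb$ using continuity of the variational formula. You should replace your truncation by that mechanism; it is precisely where Assumption~(B) enters.

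Part (c) is where your proposal genuinely departs from the paper, and the proposed route breaks down. The Gamma identity $a^{-2/p}=\gC(2/p)^{-1}\int_0^\infty t^{2/p-1}e^{-ta}\,dt$ linearizes a multiplicative prefactor, but here $(T_n/n)^{-2/p}$ sits inside the exponent multiplying $S_n^2$; and after a classical Hubbard--Stratonovich in $S_n$ the linear term still carries the factor $(T_n/n)^{-1/p}$, so the two dependences never separate into a product over $i$, and the $\sup_c\inf_t$ structure cannot arise this way. The paper instead obtains the min--max formula by invoking the self-normalized large deviation principle of~\cite{PLS} for $\P(S_n/T_n^{1/p}\ge x)$ and integrating over the level $x$, exactly as in the proof of Proposition~\ref{prop:selfnormal}. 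The Legendre-type identity is likewise not a formal convex conjugation (indeed $u\mapsto\frac{p-1}{p}u^{p/(p-1)}$ and $v\mapsto\frac{2-p}{2p}v^{p/(2-p)}$ are not Legendre duals of one another): the paper proves it by randomizing the inverse temperature, writing $Z_p\equald \eta\cdot U_{2,p}$ for $1<p<2$ and evaluating $\E[\widehat{\cZ}_{n,p}(\gb \widetilde{U}^2;\rho)]$ in two ways — once as $\int(\E e^{\sqrt{\gb}zX-|zX|^p/p})^n\,dz$, whose rate is $\sup_z\psi_\rho(\sqrt{\gb}z,-|z|^p/p)$, and once as a Laplace integral of $\exp(n(E_n(yz^2)-\tfrac{2-p}{2p}z^{2p/(2-p)}))$ via the density tail of $U_{2,p}$. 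Finally, the reason only an upper bound survives in the last display is not a minimax interchange in $(y,c,t)$: it is that the identity $\sup_{x\ge0}\{E(xy)-\tfrac{2-p}{2p}x^{p/(2-p)}\}=\sup_z\psi_\rho(\sqrt{y}z,-|z|^p/p)$ determines $E$ only from above, since a sup-transform cannot be inverted without convexity information on $E$ in $x$; setting $y=\gb/x$ and infimizing over $x$ then gives the stated one-sided bound.
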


Note that the representation~\eqref{eq:htempvarp>2} for $p>2$ case is a generalization of~\eqref{eq:classicHSvar} in the classical Curie--Weiss model. One can prove a Gaussian CLT for the scaled magnetization in the setting of Theorem~\ref{thm:genrho} for $\gb<1, p>2$ case. It is interesting to see if the maximizers in the right-hand side of~\eqref{eq:htempvarp>2} give us the information of the concentration of the magnetization as the large deviation results do in Theorem~\ref{thm:gen} for all $\gb<\gb_c(\rho)$. 

For $p\in(0,1)$, we generalize the results in Theorem~\ref{thm:rhop2} to the $\ell^p$ self-scaled model as follows.

\begin{thm}\label{thm:genrho2}
Let $(p_k)_{k\ge 1}$ and the function $\tau(\cdot)$ be as defined in Theorem~\ref{thm:rhop2}. Assume that for $X\sim\rho$, $\P(X=0)=0$, and one of the following holds:
\begin{enumeraten}
    \item\label{assum1} 
    $|\log\P(x\le X\le t x)|\le Cx^{\ga_1}$ 
    for some $C>0$, $t> 1$, and $\ga_1<2-p$, as $x\to \infty$.
    \item\label{assum2} $|\log\P(|X|<x)|\le C x^{-\ga_2}$ 
    for some $C>0$ and $\ga_2>2(1-p)$, as $x\to 0$.
\end{enumeraten}
For $p\in(0,1)$, we have
\[
\lim_{n\to\infty}n^{1-2/p}\log \cZ_{n,p}(\gb;\rho) = \frac{\gb}{2}\tau(p).
\]
\end{thm}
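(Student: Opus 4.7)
The plan is to establish matching upper and lower bounds for $n^{1-2/p}\log\cZ_{n,p}(\gb;\rho)$ converging to $\gb\tau(p)/2$. The upper bound rests on a deterministic inequality. Setting $\lambda_i := |x_i|^p/\sum_j|x_j|^p$ (so $\sum_i\lambda_i = 1$) and using $|S_n|\le \sum_i|x_i|$ together with $\|\mvx\|_{n,p}^2 = \|\mvx\|_p^2/n^{2/p}$, one obtains
\[
H_{n,p}(\mvx) \le \frac{n^{2/p-1}}{2}\Bigl[\Bigl(\sum_i \lambda_i^{1/p}\Bigr)^{2} - \sum_i \lambda_i^{2/p}\Bigr].
\]
It suffices to show that the supremum of the bracket over the probability simplex equals $\tau(p)$. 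Substituting $y_i = \lambda_i^{1/p}$, this reduces to $\sup\{(\sum y_i)^2-\sum y_i^2 : y_i\ge 0,\ \sum y_i^p = 1\} = \tau(p)$. The Lagrange equation $y^{1-p}(S-y) = \gamma p/2$ has at most two positive solutions since its left-hand side is unimodal on $(0,\infty)$, so at a maximum the active coordinates take at most two distinct values. The symmetric critical points (all active $y_i = k^{-1/p}$) give value $k^{2-2/p}-k^{1-2/p}$, and a direct case analysis shows that asymmetric two-value critical points cannot exceed these. Maximizing over integer $k$ gives the claim, whence $\cZ_{n,p}(\gb;\rho) \le \exp\bigl(\gb\tau(p)n^{2/p-1}/2\bigr)$.

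For the lower bound, fix $k = k^*$ and $\varepsilon > 0$ small, and construct an event $A$ on which $H_{n,p}\ge (1-o(1))\tau(p)n^{2/p-1}/2$ and $\log\P(A) = o(n^{2/p-1})$. Under Assumption~(1), let $A$ be the event that $k$ specified coordinates satisfy $X_i \in [a_n,(1+\varepsilon)a_n]$ with a common sign, where $a_n := n^{1/p}\omega_n$ and $\omega_n = n^c$ for small $c>0$. The hypothesis gives $-\log\P(A) = O(n^{\ga_1/p}\omega_n^{\ga_1}) = o(n^{2/p-1})$ since $\ga_1 < 2-p$. On $A$ the $k$ large spins dominate: $ka_n^p = kn\omega_n^p \gg n$, so $T_n\approx ka_n^p$ and $\|\mvx\|_p^2 \approx k^{2/p}a_n^2$, while by the CLT $|S^{\mathrm{rest}}| = O(\sqrt n)$ and $U^{\mathrm{rest}} = O(n)$ are negligible compared to $ka_n$ and $ka_n^2$, yielding $H_{n,p}\ge (1-o(1))\tau(p)n^{2/p-1}/2$ with probability at least $1/2$ on $A$. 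Under Assumption~(2), fix $a_0>0$ with $\P(|X|\in [(1-\varepsilon)a_0, (1+\varepsilon)a_0]) > 0$ (available because $\P(X=0)=0$), and let $A$ be the event that $k$ specified indices lie in this range with aligned signs while the remaining $n-k$ spins satisfy $|X_i|\le b_n := (k/n)^{1/p}/n^c$. The hypothesis $|\log\P(|X|<b)|\le Cb^{-\ga_2}$ yields $-\log\P(A) = O(n^{1+\ga_2/p + c\ga_2})$, which is $o(n^{2/p-1})$ for $c$ sufficiently small because $\ga_2 < 2(1-p)$. Since $(n-k)b_n^p = k/n^{cp}\to 0$, the small spins' contribution to $T_n$, $S_n$, and $U_n$ is negligible, yielding $H_{n,p}\ge (1-O(\varepsilon))\tau(p)n^{2/p-1}/2$ on $A$. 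Combining and letting $\varepsilon\to 0$ completes the matching lower bound.

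The main technical obstacle is the analytic supremum identity. The direction $\sup \ge \tau(p)$ is immediate from the symmetric configurations, but the matching upper bound requires the Lagrangian analysis described above, in particular the careful comparison ruling out asymmetric two-value critical maxima. The lower bound constructions themselves are straightforward, but the two assumptions require different strategies: Assumption~(1) exploits the tail at infinity to place $k$ spins at a growing scale $a_n$, while Assumption~(2) exploits the mass near the origin to suppress the remaining $n-k$ spins below a shrinking threshold $b_n$.
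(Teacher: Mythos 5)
Your proposal follows essentially the same route as the paper's proof: the upper bound reduces to the deterministic optimization $\sup\{(\sum_i y_i)^2-\sum_i y_i^2:\ y_i\ge 0,\ \sum_i y_i^p=1\}=\tau(p)$ via a Lagrange reduction to two-valued configurations, and the lower bound uses the same two events (under Assumption~(1), $k$ aligned spins at a scale growing faster than $n^{1/p}$; under Assumption~(2), $k$ aligned spins at a fixed scale with the remaining $n-k$ spins suppressed below a shrinking threshold), with the exponents checked exactly as the paper checks them. Two points, however, need attention.

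First, under Assumption~(1) you control the remaining $n-k$ spins by invoking the CLT to get $|S^{\mathrm{rest}}|=O(\sqrt n)$ and $U^{\mathrm{rest}}=O(n)$. The hypotheses only give $\E_\rho|X|^p=1$ with $p<1$, so $\rho$ need not have a finite second (or even first) moment and the CLT is not available; as written this step fails. The paper avoids the issue by adding the constraint $|X_j|\le C_3$ for $j>k$ to the event, which costs only $(n-k)\log\P(|X|\le C_3)=O(n)=o(n^{2/p-1})$ in log-probability; alternatively one can bound $\sum_{j>k}x_j^2\le(\max_j|x_j|)^{2-p}\sum_j|x_j|^p$, control $\max_j|x_j|$ by Markov's inequality using only the $p$-th moment, and use the symmetry of $\rho$ to get $S^{\mathrm{rest}}\ge 0$ with probability at least $1/2$, which is all the cross term requires. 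Second, the assertion that asymmetric two-valued critical points cannot beat the symmetric ones is precisely the nontrivial computation in the upper bound: the paper parametrizes such configurations by $t=b/a\in(0,1)$ and shows, via concavity of $u_{k,l}(t)=k+(l-1)t-(k-1)t^{p-1}-lt^p$, that the sign pattern of $h_{k,l}'$ forces any interior critical point to be a minimum rather than a maximum, so the optimum occurs at $t\in\{0,1\}$. Your sketch correctly identifies this as the main obstacle but does not carry it out. Both issues are repairable without changing your overall strategy.
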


When $X\sim\rho_p$, we have $\P(|X|<x)\ge C x$ for small $x>0$ and so 
\[
|\log\P(|X|<x)|\le -C\log x \le Cx ^{-\ga_2}
\]
for any $\ga_2>2(1-p)$. Thus, $\rho_p$ satisfies Assumption~(\ref{assum2}).

\subsection{Heuristics behind the proof}\label{ssec:pfheu}

In this section, we provide some heuristics behind the analysis of the $\ell^p$ constraint model. A similar analysis can also be used for the $\ell^p$ self-scaled model. Let $\mvX=(X_1,X_2,\ldots,X_n)$ be distributed with density proportional to 
\[
\exp\left(\frac{\gb n}{2}\cdot \frac{\abs{n^{-1}\sum_{i=1}^nx_i}^2}{\norms{\mvx}_{n,p}^2} - \frac{\gb}{2}\cdot \frac{n^{-1}\sum_{i=1}^{n}x_i^2}{\norms{\mvx}_{n,p}^2} \right) \prod_{i=1}^{n}\rho_{p}(x_{i}).
\]
One can easily check that the conditional density of $X_i$ given $\{(X_j)_{j\neq i}=(x_j)_{j\neq i}\}$ at $x$ is proportional to
\[
\exp\left(\gb\cdot \frac{n^{-1}\sum_{j\neq i}x_j}{\norms{\mvx}_{n,p}^2}\cdot x -\frac{|x|^p}{p}- O(1/n) \right),\qquad  x\in\dR.
\]
Due to the mean-field structure, if the following concentration behavior (unproven) is valid for large $n$:
\[
\frac1n\sum_{j=1}^n X_i \approx a_1,\text{ and } 
\frac1n\sum_{j=1}^n |X_i|^p \approx a_p^p,
\]
then we should have $a_1 = \E Y \text{ and } a_p^p=\E|Y|^p,$ where $Y$ has density proportional to
\[
\exp(\gb\cdot a_1/a_p^2\cdot y - |y|^p/p), y\in\dR.
\]
Defining, $\theta=\gb\cdot a_1/a_p^2$, we must have (see Lemma~\ref{lem:psiibp})
\[
a_1=\E[Y] = \psi_p'(\theta),\text{ and } a_p^p=\E[|Y|^p] = 1+\theta \psi'_p(\theta)
\]
where $\psi_p(t)=\log \E \exp(tZ_p)$. Thus, $\theta$ should satisfy the equation
\begin{align}\label{eq:theta}
\theta &= \gb\cdot \frac{\psi_p'(\theta)}{(1+\theta \psi'_p(\theta))^{2/p}}.
\end{align}
Lemma~\ref{lem:psidec} shows that  the function
\[
t\mapsto 
\frac{t}{\psi_p'(t)} \cdot  (1+t \psi'_p(t))^{2/p},\qquad t>0
\]
is strictly increasing for $p\ge 2$.
Now, $\theta=0$ is always a solution to~\eqref{eq:theta} and there is a unique positive solution if $p\ge 2$ and
\begin{align*}
\gb > \hat\gb_c:=\inf_{\theta\ge 0} \theta \cdot  {(1+\theta \psi'_p(\theta))^{2/p}} / \psi_p'(\theta).
\end{align*}
Thus, for $\gb\le \hat\gb_c$, we expect the magnetization to be concentrated at $0$ and a typical coordinate $X_i$ to be distributed according to $\rho_p$. Whereas, for $\gb>\hat\gb_c$, the magnetization should not be concentrated at zero. For $p\ge 2, \gb>\gb_c$, this can be seen from the following computation.  For $\theta>0$ satisfying~\eqref{eq:theta} and with $a:= \psi_p'(\theta), b:=1+\theta \psi'_p(\theta)$, we have
\begin{align*}
\cZ_{n,p}(\gb)
&\approx  
\int_{\dR^{n}} b^{-n/p} \exp\left(\frac{\gb n}{2}\cdot \frac{\abs{n^{-1}\sum_{i=1}^nx_i}^2}{\norms{\mvx}_{n,p}^2} - \frac1{pb}\sum_{i=1}^n|x_i|^p \right)c_p^n\prod_{i=1}^{n}dx_i\\
&\ge \exp\left(n\psi_p(\theta)- \frac{n}2\theta a +\frac{n}{p}(b-1) -\frac{n}{p}\log b\right)\cdot  \\
&\qquad\int_{\sum_{i=1}^nx_i \approx na+O(\sqrt{n}), \sum_{i=1}^n|x_i|^p \approx nb+O(\sqrt{n})}  \prod_{i=1}^{n} c_p e^{\theta x_i-|x_i|^p/p - \psi_p(\theta)}dx_i.
\end{align*}
Note that the last integral is $\Theta(1)$ by CLT and by Lemma~\ref{lem:psidec}, we have
\begin{align*}
\psi_p(\theta)&- \frac12\theta a +\frac{\theta}{p} (b-1) -\frac{1}{p}\log b\\
&= \psi_p(\theta) -\frac12\theta \psi'_p(\theta) +\frac1p \theta \psi'_p(\theta) - \frac1p\log(1+\theta \psi'_p(\theta))\\
&\ge \psi_p(\theta) -\frac12\theta \psi_p'(\theta) 
= \int_0^\theta t \cdot \left(\frac{\psi_p'(t)}{t} - \frac{\psi_p'(\theta)}{\theta}\right)dt >0.
\end{align*}
The generalized Hubbard--Stratonovich transform is used to understand the distribution of the typical coordinates in a structured way and make the above heuristics rigorous.

\subsection{Discussion on the Critical Temperatures}\label{ssec:critical}

We have seen that the $\ell^p$ constraint Curie--Weiss model for $p>2$ has the critical inverse temperature $\gb_c(p)$ as in~\eqref{eq:betacp} in a sense that if $\gb<\gb_c(p)$ then the limiting free energy
\[
\lim_{n\to\infty}\frac1n \log\cZ_{n,p}(\gb)=0
\]
and for $\gb>\gb_c(p)$ then the limiting free energy is strictly positive.

In the $\ell^p$ self-scaled model with general base measure $\rho$, Theorem~\ref{thm:genrho} (a) tells us that the limiting free energy is strictly positive when $\gb>\gb_c(\rho):=1/\var_\rho(X)$. However, it is unclear that $\gb_c(\rho)$ is indeed the critical inverse temperature. To show this, it follows from~\eqref{eq:htempvarp>2} that we need to show 
\begin{align}\label{eq:mgfineq}
\E \exp\left(\sqrt{\gb}zwX-z^p |X|^p/p\right)\le \exp\left(\frac{p-2}{2p}w^{2p/(p-2)}\right)
\end{align}
for all $z,w\ge 0, \gb\le \gb_c(\rho)$, where $X\sim\rho$. 

One can derive the inequality for $\rho=\rho_p$, as mentioned in the discussion after Theorem~\ref{thm:rhop}. The inequality~\eqref{eq:mgfineq} holds true for $\rho=\frac12\gd_{1}+\frac12\gd_{-1}$, which boils down to the classical Curie-Weiss model. Indeed,
\begin{align*}
\psi_\rho(\sqrt{\gb_c(\rho)}zw,-z^p/p)
&=\E_\rho\exp\left(\sqrt{\gb_c(\rho)}zwX-z^p |X|^p/p\right)\\
&=\cosh(zw)e^{-z^p/p}
\le\exp\left(\frac{z^2 w^2}{2}-\frac{z^p}{p}\right)
\le \exp\left(\frac{p-2}{2p}w^{2p/(p-2)}\right).
\end{align*}
Here, we used the bound $\cosh(t)\le \exp(t^2/2)$ and Young's inequality $ab\le a^q/q + b^r/r$ for $q=p/2$ and $1/q+1/r=1$. The inequality~\eqref{eq:mgfineq} remains open for general symmetric measures. Note that one can show~\eqref{eq:mgfineq} for $\gb\le 1$ using Young's inequality.

For $p=2$ case, Theorem~\ref{thm:genrho} (b) shows that $\gb_c(\rho)$ is the critical inverse temperature for the $\ell^p$ self-scaled model. In the case where $1<p<2$, it is not clear whether $\gb_c(p)$ is the critical inverse temperature even for the $\rho_p$ case. 

For $\rho=\rho_1$, one can show that $\gb_c(1)=\frac12$ is the critical inverse temperature. For $X\sim\rho_1$ and $0\le x< y$, we have
\begin{align*}
I(x,y)&=\sup_{u,v}(ux+vy-\log\E\exp(uX+v|X|))\\
& =\sup_{0\le u < w}(ux+(1-w)y -\log w+\log(w^{2}-u^{2}))\\
& =y-\log y+\sup_{0\le s < 1, z> 0}(zst-z +\log z+\log(1-s^2))\\
& =y-1-\log y+\sup_{0\le s < 1}(\log(1-s^2)-\log(1-st))\\
&= y-1-\log y -\log\frac12(1+\sqrt{1-t^2})
\end{align*}
where $t:=x/y$ and the third equality follows by a change of variable $t=u/w, z=wy$. In particular, we get
\begin{align*}
\sup_{0\le x\le y}(\gb x^{2}/2y^{2} - I(x,y)) = \sup_{0\le t\le 1}\left(\frac12\gb t^2 +\log\frac12(1+\sqrt{1-t^2})\right).
\end{align*}
By changing the variable $t=\sin\theta$, one can quickly check that the optimizer is attained at $t_*$ where
\[
t_*^2:=\frac{2(2-1/\gb)_+}{\sqrt{\gb^2+4\gb}+2-\gb}.
\]
If $\gb<\gb_c(1)=\frac12$, then $t_*=0$ and the limiting free energy is zero. On the other hand, if $\gb>\gb_c(1)=\frac12$, then one can see that the limiting free energy is strictly positive.

\subsection{Related Literature}

If we remove the $\ell^p$ scaling in the partition function~\eqref{eq:Zrhop}, our model boils down to the generalized Curie--Weiss model in~\cite{EN78}, where the base measures are given by the product measure with sub-Gaussian tail decay. From this perspective, our model generalizes to the one in~\cite{EN78} by introducing an extra dependence structure on the spin configuration, which arises from the $\ell^p$ constraint. Under Gaussian tail decay assumption for $\rho$, Ellis and Newman~\cite{EN78} proved that the Curie--Weiss model without constraint exhibits the phase transition at $\gb_c(\rho) = 1/\var_\rho(X)$. It was shown in~\cite{EiseleEllis} that the generalized Curie--Weiss model can have a sequence of phase transitions at different critical temperatures.

Since the self-organized criticality model in~\cite{CG16} is the special case where $p=2$ and $\gb=\gb_c(2)$ is the critical inverse temperature, our model can also be considered as a generalization of the self-organized criticality model. Detailed analysis at criticality $\gb_c(2)$ was investigated in~\cite{G14, GorVar15}. 

We simplified our model with self-scaled random vectors with $\rho_p$ distribution and extended to a general $\ell^p$ self-scaled Curie--Weiss model. From this perspective, our model is closely connected to self-normalized processes in~\cite{PLS}. 

Corresponding disordered model is known as $\ell^p$–Grothendieck spin–glass model~\cite{ChenSen23, Domin22, KhoNao12}. For an $n\times n$ matrix $A$, it is well-known that the maximum of $\langle Ax, x\rangle$ for $\|x\|_2\le 1$ is the largest eigenvalue of $\frac12 (A+A^T)$. A natural question is the optimization of $\langle Ax, x\rangle$ for $\|x\|_p\le 1$ and $1\le p \le \infty$. This is called the $\ell^p$-Grothendieck problem. The same optimization problem with random matrix $A$, where the entries are i.i.d.~standard Gaussian, has been studied in~\cite{ChenSen23, Domin22,KhoNao12}. The limiting free energy is given in~\cite{ChenSen23}.

The distribution of a typical point from the intersection of high-dimensional symmetric convex bodies shows phase transition. \cite{SChatt17} considered the uniform probability distribution on the intersection of $\ell^1$-ball and $\ell^2$-ball and showed that phase transitions and localization phenomena occur depending on the radius of $\ell^2$-ball. Suppose that $X=(X_1, X_2,\ldots,X_n)$ is a random vector uniformly chosen from $\{x\in\dR^n: \sum_{i=1}^n|x_i|=n,\sum_{i=1}^n|x_i|^2=bn \}$ for $b>1$. If $1<b\le 2$, then the first $k$ random variables in $(X_1, X_2, \ldots, X_n)$ converges to an i.i.d.~sequence of some random variables depending on $b$ in law and in joint moments as $n\to\infty$. On the other hand, if $b>2$, then $(X_1, X_2, \ldots, X_k)$ converges to an i.i.d~sequence of exponential random variables, and the localization phenomenon appears in a sense that most of the mass is concentrated on the maximum component of $(X_1, X_2, \ldots, X_n)$ and the other components have negligible masses as $n\to\infty$.

The result of~\cite{SChatt17} has been extended to multiple constraints by~\cite{Nam20}. He considered the microcanonical ensembles with multiple constraints with unbounded observables and showed that a sequence $(X_1, X_2,\ldots, X_n)$ sampled from the multiple constraints exhibits a phase transition, localization, and delocalization phenomena. Note that the microcanonical ensemble with unbounded multiple constraints has been of great interest due to its connection to nonlinear Schr\"{o}dinger equations.   

\subsection{The Classical Curie--Weiss Model}\label{ssec:classicCW}

Curie--Weiss model or Ising model on the complete graph is one of the simplest spin models exhibiting phase transition with respect to the temperature in terms of magnetization or average spin. The spins are arranged on the vertices of the complete graph $K_{n}=([n],\cE_{n})$ on $n$-vertices. The spin at vertex $i\in[n]$ is denoted by $\gs_{i}$ and the Hamiltonian corresponding to a spin configuration $\mvgs=(\gs_{i})_{i=1}^{n}$ is given by 
\begin{align*}
H_{n}(\mvgs)= \frac{1}{n}\sum_{i<j}\gs_{i}\gs_{j} = \frac{n}{2}\left(\frac1n\sum_{i=1}^{n}\gs_{i} \right)^{2} - \frac{1}{2} \quad\text{ for } \mvgs\in S_{n,\infty}:=\{-1,+1\}^{n}.
\end{align*}
The Gibbs measure on $\{-1,+1\}^{n}$ at inverse temperature $\gb>0$ is defined as 
\begin{align*}
d\mu_{n,\gb}(\mvgs) = \cZ_{n}(\gb)^{-1} \exp(\gb H_{n}(\mvgs)) d\mvgs
\end{align*}
where $d\mvgs$ is the uniform measure on  $\{-1,+1\}^{n}$ and $\cZ_{n}(\gb)$ is the partition function. Several approaches are available in the literature to analyze phase transition in the Curie--Weiss model. Next, we describe three of the approaches. 

\subsubsection{Combinatorial approach} 

One can prove the phase transition of the magnetization using direct combinatorial arguments, equivalent to large deviation analysis. Indeed, it follows from an elementary counting and Stirling's formula that for any closed subset $K\subset [-1,1]$, 
\begin{align*}
\frac1n \log \mu_{n,\gb}(\{m(\mvgs)\in K\}) = 
-\min_{x\in K}\left\{(I(x)-F(x))-\min_{y\in[-1,1]}(I(y)-F(y))\right\}
\end{align*}
where $I(x)=\left(\tfrac{1+x}{2}\right)\log\left(\tfrac{1+x}{2}\right)+\left(\tfrac{1-x}{2}\right)\log\left(\tfrac{1-x}{2}\right)$ and $F(x)=\tfrac{\beta x^2}{2}$. Thus, the magnetization is concentrated on the minimum points of $I-F$, and the minimizer satisfies $x=\tanh(\gb x)$. If $\beta<1$, then the equation has the unique solution $x=0$ so that the magnetization is concentrated at $0$. On the other hand, for $\beta>1$,  the magnetization is concentrated at $\pm x_{\gb}^{\ast}$, which are nonzero solutions to $x=\tanh(\gb x)$.

\subsubsection{Exchangeable pair approach} 

Another approach is to use the Glauber dynamics. Here, we describe Stein's method for exchangeable pairs using Glauber dynamics. Suppose $\mvgs$ is sampled from $\mu_{n,\gb}$ and $I\in [n]$ is chosen at random. We define $\mvgs'$ as follows. If $I=j$, then $\gs_i':=\gs_i$ for $i\neq j$ and $\gs_j'$ is sampled from $\mu_{n,\gb}$ conditioned to $(\gs_i)_{i\neq j}$. Then, $(\mvgs, \mvgs')$ is an exchangeable pair. For $W=n^{-1/2}\sum_{i=1}^{n}\gs_i$ and $W'=n^{-1/2}\sum_{i=1}^{n}\gs_i'$, we get $\E[W-W'\mid W]\approx n^{-1}(1-\gb)W$ for $\gb<1$ and $\E[W-W'\mid W]\approx n^{-2}W^3/3$ for $\gb=1$. Classical Stein's method argument shows that $W$ converges to normal for $\gb<1$, and it converges to a non-normal distribution for $\gb=1$ under suitable scaling (see~\cite{CS11}). We also note that~\cite{Chatt07} used this exchangeable pair approach to obtain concentration inequalities for the magnetization. 

\subsubsection{Hubbard--Stratonovich transform} 

One can study the phase transition of the Curie--Weiss model using analytic methods. Hubbard--Stratonovich transform is
\begin{align*}
e^{{x^2}/{2}} = \frac{1}{\sqrt{2\pi}}\int_{\dR}\exp(-{y^2}/{2}+xy)\, dy.
\end{align*}
Since $H_{n}(\mvgs)= \tfrac{1}{2n}(\sum_{i=1}^{n}\gs_i)^2-\tfrac{1}{2n}\sum_{i=1}^{n}\gs_i^2= \tfrac{1}{2n}(\sum_{i=1}^{n}\gs_i)^2-\tfrac{1}{2}$, we have
\begin{align*}
\cZ_{n}(\gb)
= \E[e^{\gb H_n(\mvgs)}]
&= \frac{e^{-{\gb}/{2}}}{\sqrt{2\pi}}\int_{\dR}\exp\left(-y^2/{2}+n\log \cosh\left(\sqrt{\gb/n}\cdot y\right)\right)\, dy\\
&=\frac{e^{-{\gb}/{2}}\sqrt{n}}{\sqrt{2\pi}}\int_{\dR}\exp\left(n\left(-{y^2}/{2}+\log \cosh\left(\sqrt{\gb} \cdot y\right)\right)\right)\, dy.
\end{align*}
Thus,
\begin{align}\label{eq:classicHSvar}
\lim_{n\to\infty}\frac1n \log \cZ_n(\gb)
= \sup_{y}\left\{-{y^2}/{2}+\log \cosh\left(\sqrt{\gb} y\right)\right\}.
\end{align}
If $\gb<1$, $W=\frac{1}{\sqrt{n}}\sum_{i=1}^{n}\gs_i$, and $t>0$, it follows from Hubbard--Stratonovich transform and the dominated convergence theorem that
\begin{align*}
\lim_{n\to\infty}\E_{\mu_{n,\gb}}[e^{tW}]
&=\lim_{n\to\infty}\frac{\int_{\dR}\exp\left(-{y^2}/{2}+n\log \cosh\left(n^{-1/2}(\sqrt{\gb}\cdot y-t)\right)\right)\, dy }{\int_{\dR}\exp\left(-{y^2}/{2}+n\log \cosh\left(\sqrt{\gb/n} \cdot y\right)\right)\, dy }\\
&=\exp\left({t^2}/{2(1-\gb)}\right).
\end{align*}
This shows that $W$ converges to a normal distribution as $n\to\infty$. For further details on the classical Curie--Weiss model, we refer the reader to~\cite[Chap. 2]{FV18}.

In this paper, we mainly focus on developing and applying the generalized Hubbard--Stratonovich (GHS) transform to our model and using the classical large deviation tools. Appropriate stationary dynamics and application of Stein's method will be discussed in a future article.

\subsection{Notation}

The indicator function for an event $A$ is denoted by $\ind_A$.
For random variables $X,Y$, we use the notations $X\equald Y$ when they have the same distribution. If $X$ and $Y$ are independent, we write $X\perp Y$. If the distribution of a random variable $X$ is given by its density or probability measure $\rho$, we denote by $X\sim\rho$. The normal distribution with mean $\mu$ and variance $\gs^2$ is denoted by $\mathrm{N}(\mu,\gs^2)$. 
The expectation with respect to a random variable $X\sim\rho$ is denoted by $\E_X=\E_\rho$ and the subscript can be omitted when there is no ambiguity.
For a probability measure $\nu$, we use the notation $\langle f \rangle_\nu = \E[f(X)]$ where $X\sim \nu$. For an $n$-dimensional vector $\mvx =(x_1,x_2,\ldots,x_n)$ and $p>0$, the normalized $\ell^p$ norm is denoted by
\[
\norms{\mvx}_{n,p}
=\Big(\frac1n \sum_{i=1}^n |x_i|^p\Big)^{1/p}.
\]

\subsection{Organization}

The paper is organized as follows. The model is defined in Sections~\ref{ssec:model} and~\ref{ssec:gen}. The main results are presented in Section~\ref{ssec:mainres}.  Section~\ref{sec:auxlem} contains auxiliary lemmas that will be frequently used in the proofs of main results.  Section~\ref{sec:pf-free} contains the proofs of Theorems~\ref{thm:rhop},~\ref{thm:typical} and Proposition~\ref{prop:selfnormal} about the behavior of the $\ell^{p}$ constraint Curie--Weiss model.  In Section~\ref{sec:GHSproof}, we prove Theorem~\ref{thm:ghst} and Theorem~\ref{thm:lgamma} for the generalized Hubbard--Stratonovich transform.  Section~\ref{sec:LargeDev} contains proofs of Theorems~\ref{thm:gen},~\ref{thm:genrho}, and~\ref{thm:genrho2} regarding the generalization to $\ell^p$ self-scaled model. Finally, Sections~\ref{sec:mscw} and~\ref{sec:oq} contain discussions about the multi-species model and open questions, respectively.

\section{Auxiliary Lemmas }\label{sec:auxlem}

\subsection{Cumulant generating function}\label{ssec:cgflem}

Let $0<p<\infty$ and $Z_p$ be a random variable with density $\rho_p(x) = c_p e^{-|x|^p/p}$. Note that $\E[Z_p^2]=\nu_p^2 = 1/\gb_c$ and $\E[|Z_p|^p]=1$. Recall the cumulant-generating function 
\[
\psi_p(t) = \log \E[e^{tZ_p}].
\]

\begin{lem}\label{lem:mgfbd}
For all real number $\ell \ge 2$ we have
\[
\nu_p^{-\ell}\E \abs{Z_p}^{\ell} \text{ is decreasing in } p>0.
\]
In particular, for all $t\in\dR, p\ge 2$ we have $\psi_p(t)\le \frac12\nu_p^2t^2$ and $\lim_{t\to 0}\psi_p(t)/t^2 = \frac12\nu_p^2$.
\end{lem}
\begin{proof}
Fix $\ell\ge2$ and $p\ge q>0$. To prove that $\nu_p^{-\ell}\E \abs{Z_p}^{\ell} \le \nu_q^{-\ell}\E \abs{Z_q}^{\ell}$ or equivalently, $\E \abs{Z_q}^{\ell} /\E \abs{Z_p}^{\ell} \ge (\E Z_q^2 /\E Z_p^2)^{\ell/2}$, we use Proposition~\ref{prop:decomp}. We have $Z_q\equald Z_p\cdot U_{p,q}$ where $U_{p,q}$ is independent of $Z_p$. Thus, 
\[
\E \abs{Z_q}^{\ell} /\E \abs{Z_p}^{\ell} = \E U_{p,q}^\ell\ge (\E U_{p,q}^2)^{\ell/2} = (\E Z_q^2 /\E Z_p^2)^{\ell/2}.
\]
Using symmetry of $Z_p$, we have, for all $t\in\dR$,
\[
\E \exp(tZ_p) = \sum_{k=0}^\infty \frac{t^{2k}}{(2k)!}\E Z_p^{2k}.
\]
Combining, we get for $p\ge 2$,
\[
\E \exp(tZ_p) \le \E \exp(t\nu_p Z_2)=\exp(\nu_p^2t^2/2).
\]
This completes the proof.
\end{proof}

\begin{lem}\label{lem:psidec}
The function $\psi_p'(t)/t$ is decreasing in $t\in[0,\infty)$ for $p\ge 2$, and increasing in $t\in[0,\infty)$ for $0<p<2$. 
\end{lem}

\begin{cor}
    Let $X$ be a r.v. with density proportional to $\exp(x - c|x|^p), x\in\dR$ for some $c>0, p\ge 2$. Then, we have
    $
    \var(X)\le \E X.
    $
\end{cor}

\begin{proof}[Proof of Lemma~\ref{lem:psidec}]
If $p=2$, then $\psi_2'(t)/t=\E Z_2^2$ is constant and the result holds. For the rest of the proof, we fix $p>0$.

Since $|Z_p|^p/p$ is Gamma distributed with parameters $1/p$ and $1$, we get that $\E Z_p^{2k}=p^{(2k+1)/p}\Gamma((2k+1)/p)/\Gamma(1/p)$ for all $k>0$. In particular, we have
\begin{align*}
    e^{\psi_p(t)}=\E e^{tZ_p}&= \frac{p^{1/p}}{\Gamma(1/p)}\cdot \sum_{k=0}^\infty \frac{t^{2k}p^{2k/p}}{(2k)!}\cdot \Gamma((2k+1)/p),\qquad t>0.
\end{align*}
Thus, it is enough to show that the function $f(t):=\log M'(t)-\log (t M(t))$ is decreasing (resp.~increasing) for $p>2$ (resp.~$p<2$), where
\begin{align*}
M(t) &:= \sum_{k=0}^\infty \frac{t^{2k}}{(2k)!}\cdot \Gamma((2k+1)/p),\qquad t>0.
\end{align*}
We have, 
\begin{align*}
    tM(t) &= p\cdot \sum_{k=0}^\infty \frac{t^{2k+1}}{(2k+1)!}\cdot \Gamma((2k+1+p)/p) = p\cdot \sum_{k=0}^\infty \frac{t^{2k+1}}{(2k+1)!}\cdot a_p(k),\\
    M'(t) &=  \sum_{k=0}^\infty \frac{t^{2k+1}}{(2k+1)!}\cdot \Gamma((2k+3)/p) = \sum_{k=0}^\infty \frac{t^{2k+1}}{(2k+1)!}\cdot a_2(k).
\end{align*}
where $a_i(k):=\Gamma((2k+1+i)/p)$ for $i>0$. Thus, $f'(t) \le 0$ iff $M''(t)\cdot tM(t) - M'(t)\cdot (tM(t))'\le 0.$ Moreover, we have
\begin{align*}
    &\frac1p\cdot \left[M''(t)\cdot tM(t) - (tM(t))'\cdot M'(t)\right] \\
    &=\sum_{k=0}^\infty \frac{t^{2k}}{(2k)!}\cdot a_2(k) \cdot \sum_{\ell=0}^\infty \frac{t^{2\ell+1}}{(2\ell+1)!}\cdot a_p(\ell)-\sum_{k=0}^\infty \frac{t^{2k}}{(2k)!}\cdot a_p(k) \cdot \sum_{\ell=0}^\infty \frac{t^{2\ell+1}}{(2\ell+1)!}\cdot a_2(\ell).
\end{align*}
For $n\ge 0$, the coefficient of $t^{2n+1}$ in the left hand side is
\begin{align*}
&\sum_{k=0}^n
\frac{1}{(2k)!(2(n-k)+1)!}
\biggl(a_2(k)a_p(n-k)-a_p(k)a_2(n-k)\biggr)\\
&\qquad=  
\frac{1}{2}
\Bigg(
\sum_{k=0}^n
\frac{2k+1}{(2k+1)!(2(n-k)+1)!}
(a_2(k)a_p(n-k)-a_p(k)a_2(n-k))\\
&\qquad\qquad+\sum_{k=0}^n
\frac{2(n-k)+1}{(2k+1)!(2(n-k)+1)!}
(a_2(n-k)a_p(k)-a_p(n-k)a_2(k))
\Bigg)\\
&\qquad =
\sum_{k=0}^n
\frac{(k-(n-k))a_2(n-k)a_2(k)}{(2k+1)!(2(n-k)+1)!}\left(\frac{a_p(k)}{a_2(k)}-\frac{a_p(n-k)}{a_2(n-k)}\right).
\end{align*}
Since the Gamma function is log-convex, one can check that the function
\[
x\mapsto \frac{\Gamma(x+1)}{\Gamma(x+2/p)}
\]
is decreasing in $x$ for $p\ge 2$, and increasing in $x$ for $p<2$.
Since $\frac{a_p(k)}{a_2(k)}=\frac{\Gamma(x+1)}{\Gamma(x+2/p)}$ for $x=(2k+1)/p$, we obtain that 
\[
(k-(n-k))\left(\frac{a_p(k)}{a_2(k)}-\frac{a_p(n-k)}{a_2(n-k)}\right)
\]
is negative if $p\ge 2$ and otherwise positive, which completes the proof.
\end{proof}

\begin{lem}\label{lem:psiibp}
For all $t>0$,
\[
\frac{\E[|Z_p|^p e^{tZ_p}]}{\E[ e^{tZ_p}]}=1+t\psi_p'(t).
\]
\end{lem}
\begin{proof}
Using integration by parts, we have
\begin{align*}
\E[e^{t Z_p}]
&=c_p\int_{\R}e^{tx-|x|^p/p}\, dx\\
&=-c_p\int_{\R}(tx-|x|^p)e^{tx-|x|^p/p}\, dx\\
&=\E[|Z_p|^p e^{t Z_p}]-t\E[Z_p e^{t Z_p}]
=\E[|Z_p|^p e^{t Z_p}]-t\E[e^{t Z_p}]\psi_p'(t),
\end{align*}
which completes the proof.
\end{proof}

\subsection{Typical configuration}\label{ssec:typlem}

Let $p> 2$. Let $X_1,X_2,\ldots,X_n$ be i.i.d.~with density $\rho_p$, $S_n = \sum_{i=1}^n X_i$, and $T_n = \sum_{i=1}^n |X_i|^p$. Since $T_n$ is independent of $S_n/T_n^{1/p}$, we have
\begin{align*}
\widehat{\cZ}_{n,p}(\gb)
&:= \E\left[\bigl(T_{n}/n\bigr)^{-1/p} \exp\left(\frac{\gb}{2n}\cdot  S_n^2\cdot \bigl(T_n/n\bigr)^{-2/p}\right)\right]\\
&= \E\left[\bigl(T_{n}/n\bigr)^{-1/p}\right] \E\left[\exp\left(\frac{\gb}{2n}\cdot  S_n^2\cdot \bigl(T_n/n\bigr)^{-2/p}\right)\right].
\end{align*}
Let $U=U_{p,2}$ be independent of $X$ such that $X\cdot U\sim\N(0,1)$ and $\theta$ be the density of $U$. It follows from Theorem~\ref{thm:ghst} that
\begin{align*}
\widehat{\cZ}_{n,p}(\gb) 
= c_{p}\cdot\E_{U} \int_\dR \E\left[ \exp\left(\sqrt{\frac{\gb}{n}}\cdot S_n z U - \frac1{np}T_n\abs{z}^p\right)\right] \,dz.
\end{align*}
Define the joint density of $(X_1, X_2, \ldots, X_n, Z^{(n)}, U^{(n)})$ by
\begin{align*}
f(x_1,x_2,\ldots,x_n,z,u) 
= \frac{c_p}{\wh{\cZ}_{n,p}(\gb)} \exp\left(\sqrt{\frac{\gb}{n}}\cdot  z u\sum_{i=1}^n x_i - \frac1{np}\abs{z}^p\sum_{i=1}^n |x_i|^p\right)\theta(u)\, \prod_{i=1}^n \rho_p(x_i).
\end{align*}

\begin{lem}\label{lem:condden}
The random variables $X_1,X_2,\ldots,X_n$ are conditionally i.i.d.~given $Z^{(n)}=z, U^{(n)}=u$ for $z\in \dR$ and $u> 0$ with the conditional density for $X_1$
\begin{align}\label{eq:condden}
&f(x_1\mid z,u)\\
&\quad=\exp\left(\sqrt{\frac{\gb}{n}}zu x_1 - \frac1p (1+\frac{|z|^p}{n})|x_1|^p-\psi_p\left(\sqrt{\frac{\gb}{n}}\frac{zu}{(1+\frac{|z|^p}{n})^{1/p}}\right)\right)(1+{|z|^p}/{n})^{1/p}\, dx_1.\notag
\end{align}
Furthermore, we have
\begin{align*}
\E\left[X_1\mid Z^{(n)}=z, U^{(n)}=u\right]
&=\frac{1}{\sqrt{n}}\left(\sqrt{\frac{\gb}{\gb_c^2}} \cdot zu + o(1)\right),\text{ and }
\\
\var(X_1\mid Z^{(n)}=z, U^{(n)}=u)
&=\frac{1}{\gb_c}+o(1).
\end{align*}
\end{lem}
\begin{proof}
Since the density of $(Z^{(n)}, U^{(n)})$ is given by
\begin{align*}
f_{Z^{(n)}, U^{(n)}}(z,u) 
= \frac{c_p}{\wh{\cZ}_{n,p}(\gb)} \exp\left( n\left( \psi_p\left(\sqrt{\frac{\gb}{n}}\frac{zu}{(1+\frac{|z|^p}{n})^{1/p}}\right)-\frac1p\log\left(1+\frac{|z|^p}{n}\right) \right) \right) \theta(u),
\end{align*}
it is easy to see that $X_1,X_2,\ldots,X_n$ are conditionally i.i.d. given $Z^{(n)}=z, U^{(n)}=u$ with the common density~\eqref{eq:condden}. By change of variables, 
\begin{align*}
\E[X_1\mid Z^{(n)}=z, U^{(n)}=u]
&= \left(1+\frac{|z|^p}{n}\right)^{-1/p}\int_{\dR}x\exp(\theta x-|x|^p/p-\psi_p(\theta))\, dx\\
&= \left(1+\frac{|z|^p}{n}\right)^{-1/p}\psi_p'(\theta),\\
\theta
&=\sqrt{\frac{\gb}{n}}\frac{zu}{(1+\frac{|z|^p}{n})^{1/p}}.
\end{align*}
Since $\theta\to 0 $ as $n\to\infty$ and $\lim_{\theta\to 0 }\psi_p'(\theta)/\theta = \nu_p^2 = 1/\gb_c$, we get
\begin{align*}
\E[X_1\mid Z^{(n)}=z, U^{(n)}=u]
&=\left(1+\frac{|z|^p}{n}\right)^{-1/p}\psi_p'(\theta)
=\frac{1}{\sqrt{n}}\left(\sqrt{\frac{\gb}{\gb_c^2}} \cdot zu + o(1)\right).
\end{align*}
Similarly, since $\psi_p''(\theta)\to \nu_p^2$ as $\theta\to 0$, 
\begin{align*}
\var(X_1\mid Z^{(n)}=z, U^{(n)}=u)
&=\left(1+\frac{|z|^p}{n}\right)^{-2/p}\psi_p''(\theta)\\
&= \left(1+\frac{|z|^p}{n}\right)^{-2/p} \nu_p^2\cdot (1+o(1))
=\frac{1}{\gb_c}+o(1).
\end{align*}
This completes the proof.
\end{proof}

\section{Proofs for the $\ell^{p}$ constraint Curie--Weiss model}\label{sec:pf-free}

In this section, we provide the proofs of Theorems~\ref{thm:rhop},~\ref{thm:typical}, and Proposition~\ref{prop:selfnormal}, where the limiting free energy, Gaussian fluctuation, and the typical configuration of the $\ell^p$ constraint Curie--Weiss model are discussed.

\subsection{Proof of Theorem~\ref{thm:rhop} (a)}

First, we show that $\cZ_{n,p}(\gb)$ is uniformly bounded in $n$. Fix $p\ge 2$. Let $X,X_{1},X_{2},\ldots,X_{n}$ be i.i.d.~random variables from $\rho_{p}$. Define the random variables 
\[
S_{n}:=\sum_{i=1}^{n}X_{i}\text{ and } T_{n}:=\sum_{i=1}^{n}\abs{X_{i}}^{p}.
\]
Now, we define
\begin{align*}
\widehat{\cZ}_{n,p}(\gb)
&:= \E\left[\bigl(T_{n}/n\bigr)^{-1/p} \exp\left(\frac{\gb}{2n}\cdot  S_n^2\cdot \bigl(T_n/n\bigr)^{-2/p}\right)\right].
\end{align*}
Since $T_n/p$ is a Gamma random variable with shape parameter $n/p$ and the rate parameter 1, we have
\begin{align*}
\widehat{\cZ}_{n,p}(0) = n^{1/p}\E [T_{n}^{-1/p}] = \frac{(n/p)^{1/p}\cdot \gC((n-1)/p)}{\gC(n/p)}  \le (n/(n-p))^{1/p} \le \exp(1/(n-p)),
\end{align*}
where the first inequality follows by convexity of the log-Gamma function (also known as Gautschi's inequality) and thus, is uniformly bounded for $n>p$.

Let $\gb>0$ and $U\equald U_{p,2}$ independent of $X$ such that $X\cdot U\sim\N(0,1)$ which is constructed in Proposition~\ref{prop:decomp}. Using Theorem~\ref{thm:ghst}, we get
\begin{align*}
\widehat{\cZ}_{n,p}(\gb) = c_{p}\cdot\E_U \int_\dR \left(\E_{X} \exp\left(\sqrt{\frac{\gb}{n}}\cdot Xz U - \frac1{np}\abs{X z}^p\right)\right)^n \,dz.
\end{align*}
Using the fact that $X\sim\rho_p$, we simplify
\begin{align*}
&\widehat{\cZ}_{n,p}(\gb) \\
&= c_{p}\cdot\E_U \int_\dR(1+|z|^{p}/n)^{-n/p}\exp\left( n\psi_{p}\left(\sqrt{\gb/n}\cdot (1+|z|^{p}/n)^{-1/p}\cdot zU\right)\right) \,dz\\
&= c_{p}\cdot\E_U \int_\dR \exp\left(-\frac{n}{p}f(|z|^{p}/n)-\frac{|z|^p}{p}\cdot \frac{1}{1+|z|^p/2n}+ n\psi_{p}\left(\sqrt{\gb/n}\cdot\frac{zU}{ (1+|z|^{p}/n)^{1/p}}\right)\right) \,dz,
\end{align*}
where $$f(x)=\log(1+x)-2x/(2+x)$$ is a positive increasing function on $[0,\infty)$.

Note that, $$f'(x)=1/(1+x)-4/(2+x)^{2}=x^2/(1+x)(2+x)^{2}> 0\text{ for } x>0.$$
Heuristically, the term inside the exponential is approximately $-|z|^{p}/p +{\gb\nu_p^2}/{2}\cdot z^2U^2$. Thus, using the fact that $XU\sim\N(0,1)$, the upper bound is roughly 
\[
\E_{X,U}\exp({\gb\nu_p^2}/{2}\cdot X^2U^2)=(1-\gb\nu_{p}^{2})^{-1/2}<\infty \text{ when } \gb<\gb_{c}=\nu_{p}^{-2}.
\]
To rigorously upper bound the term, choose $\gd>0$ such that $\gb':=\gb(1+\gd)^{2/p}< \gb_c(p)=1/\nu_p^2$ and let $\widehat{\cZ}_{n,p}(\gb)=\sum_{i=0}^\infty J_i$ where
\begin{align*}
J_0&= c_{p}\cdot\E_U \int_{|z|^{p}\le 2n\gd}  K_{n,p,\gb}(z,U)\,dz,\\
J_i&= c_{p}\cdot\E_U \int_{2^i n\gd\le |z|^{p}\le 2^{i+1} n\gd} K_{n,p,\gb}(z,U) \,dz,\qquad i\ge 1,\\
K_{n,p,\gb}(z,U)&= \exp\left(-\frac{n}{p}f(|z|^{p}/n)- \frac{|z|^p}{p(1+{|z|^p}/{2n})}+ n\psi_{p}\left(\sqrt{\frac{\gb}{n}}\cdot\frac{zU}{ (1+|z|^{p}/n)^{1/p}}\right)\right).
\end{align*}
For $i=0$, we have
\begin{align*}
J_0\le  c_{p}\cdot\E_U \int_\dR \exp\left(-\frac{|z|^p}{p}\cdot \frac{1}{1+\gd}+n\psi_{p}\left(\sqrt{\gb/n}\cdot {zU}\right)\right) \,dz.
\end{align*}
Since $Z_p\cdot U=\eta\sim\N(0,1)$,
\begin{align*}
J_0\le (1+\gd)^{1/p}\E_{\eta}\exp\left( n\psi_{p}\left(\sqrt{\frac1n\gb(1+\gd)^{2/p}}\cdot {\eta}\right)\right)<\infty.
\end{align*}
In general, for $i\ge 1$ we get 
\begin{align*}
J_i 
&\le  c_{p}e^{-nf(2^{i}\gd)/p}\cdot\E_U\int_\dR \exp\left(-\frac{|z|^p}{p}\cdot \frac{1}{1+2^{i}\gd}+n\psi_{p}\left(\sqrt{\gb/n}\cdot\frac{zU}{ (1+2^{i}\gd)^{1/p}}\right)\right) \,dz\\
&\le 2^{1/p}e^{-(n-1)f(2^{i}\gd)/p}\E_{\eta} \exp\left( n\psi_{p}\left(\sqrt{\gb/n}\cdot\eta\right)\right).
\end{align*}
Thus, we get that $\widehat{\cZ}_{n,p}(\gb)  $ is uniformly bounded in $n$ when $\gb<\gb_{c}$.

To see the Gaussian fluctuation of the magnetization, let 
\begin{align*}
d\wh{\mu}_{n,\gb,p}
&:=\wh{\cZ}_{n,p}(\gb)^{-1} \bigg(\sum_{i=1}^n |x_i|^p/n\bigg)^{-1/p} \\
&\quad \exp\left(\frac{\gb}{2n}\cdot  \bigg(\sum_{i=1}^n x_i\bigg)^2\cdot \bigg(\sum_{i=1}^n |x_i|^p/n\bigg)^{-2/p}\right)\prod_{i=1}^n\rho_p(x_i)dx_i.
\end{align*}
For $u,v>0$, we consider
\begin{align*}
&\left\la\exp\left(-u \frac{S_n}{\sqrt{n}}-v\frac{T_n}{n}\right)\right\ra_{\wh{\mu}_{n,\gb,p}}\\
&\qquad=\wh{\cZ}_{n,p}(\gb)^{-1} \E\left[ \bigl(T_{n}/n\bigr)^{-1/p} \exp\left(\frac{\gb}{2n}\cdot  S_n^2\cdot \bigl(T_n/n\bigr)^{-2/p}-u \frac{S_n}{\sqrt{n}}-v\frac{T_n}{n}\right)\right].
\end{align*}
Using Theorem~\ref{thm:ghst}, we get
\begin{align*}
&\E\left[ \bigl(T_{n}/n\bigr)^{-1/p} \exp\left(\frac{\gb}{2n}\cdot  S_n^2\cdot \bigl(T_n/n\bigr)^{-2/p}-u \frac{S_n}{\sqrt{n}}-v\frac{T_n}{n}\right)\right]\\
&= c_{p}\cdot\E_U \int_\dR \left(\E_{X} \exp\left(\frac{X}{\sqrt{n}}\cdot (\sqrt{\gb}z U-u) - \frac{\abs{X}^p}{np}( \abs{z}^p+vp)\right)\right)^n \,dz\\
&= c_{p}\cdot\E_U \int_\dR\left(1+\frac{(|z|^{p}+vp)}{n}\right)^{-n/p}
\exp\left(n\psi_p \left(\frac{1}{\sqrt{n}}\cdot \frac{(\sqrt{\gb}z U-u)}{(1+(|z|^{p}+vp)/n)^{1/p}}\right)\right) \,dz\\
&= c_{p}\cdot\E_U \int_\dR \wt{K}_{n,p,\gb}(z,U)\,dz
\end{align*}
where $f(x)=\log(1+x)-2x/(2+x)$ and 
\begin{align*}
&\wt{K}_{n,p,\gb}(z,U)\\
&\quad =\exp\left(-\frac{n}{p}f\left(\frac{|z|^{p}+vp}{n}\right)- \frac{(|z|^p+vp)}{p\left(1+\frac{|z|^{p}+vp}{2n}\right)}
+n\psi_p\left( \frac{(\sqrt{\gb}z U-u)}{\sqrt{n}\left(1+\frac{|z|^{p}+vp}{n}\right)^{1/p}}\right)\right).
\end{align*}
Let $\gd>0$ be such that $\gb':=\gb(1+\gd)^{2/p}< \gb_c(p)$, then
\begin{align*}
&c_{p}\cdot\E_U \int_{\{|z|^{p}+vp\le 2n\gd\}}\wt{K}_{n,p,\gb}(z,U) \,dz\\
&\le  c_{p}\cdot\E_U \int_\dR \exp\left(-\frac{(|z|^p+vp)}{p}\cdot \frac{1}{1+\gd}+\frac{\nu_p^2}{2}\cdot (\sqrt{\gb}z U-u)^2\right) \,dz\\
&=c_p \cdot (1+\gd)^{1/p}\E_{U,X}\exp\left(-\frac{v}{1+\gd}+ \frac12\nu_p^2((1+\gd)^{1/p}\sqrt{\gb}X U-u)^2\right)<\infty.
\end{align*}
For each $i\ge 1$, let $a=2^{i}\gd$, then we get 
\begin{align*}
&c_{p}\cdot\E_U \int_\dR \ind_{\{na\le|z|^{p}+vp\le 2na\}}\wt{K}_{n,p,\gb}(z,U) \,dz\\
&\le  c_{p}e^{-nf(a)/p}\cdot\E_U\int_\dR \exp\left(-\frac{(|z|^p+vp)}{p}\cdot \frac{1}{1+a}+\frac{\nu_p^2}{2} (\sqrt{\gb}z U-u)^2 \right) \,dz\\
&\le 2^{1/p}e^{-(n-1)f(a)/p}\E_{U,X} \exp\left(-\frac{v}{1+a} + \frac12\nu_p^2(\sqrt{\gb}X U(1+a)^{-1/p}-u)^2\right).
\end{align*}
Since this is summable in $i$, we get that $\la\exp\left(-u S_n/\sqrt{n}-vT_n/n\right)\ra_{\wh{\mu}_{n,\gb,p}}$ is uniformly bounded in $n$ when $\gb<\gb_{c}$. 
Since $\lim_{t\to 0}\psi_p(t)/t^2 = \frac12 \nu_p^2 t^2$ by Lemma~\ref{lem:mgfbd}, we get
\begin{align*}
\lim_{n\to\infty}\left( 
\left(\frac{1}{\sqrt{n}}\cdot (1+(|z|^{p}+vp)/n)^{-1/p}\cdot (\sqrt{\gb}z U-u)\right)\right)^n 
=\exp\left( \frac{\nu_p^2}{2}(\sqrt{\gb}z U-u)^2 \right).
\end{align*}
Thus, it follows from the DCT that
\begin{align*}
&\lim_{n\to\infty}\E\left[ \bigl(T_{n}/n\bigr)^{-1/p} \exp\left(\frac{\gb}{2n}\cdot  S_n^2\cdot \bigl(T_n/n\bigr)^{-2/p}-\frac{u S_n}{\sqrt{n}}-\frac{vT_n}{n}\right)\right]\\
&= c_{p}\lim_{n\to\infty}\E_U \int_\dR\left(1+\frac{(|z|^{p}+vp)}{n}\right)^{-n/p}
\exp\left(n\psi_p \left(\frac{1}{\sqrt{n}}\cdot \frac{(\sqrt{\gb}z U-u)}{(1+(|z|^{p}+vp)/n)^{1/p}}\right)\right) \,dz\\ 
&= \E_{Z_2}\left[ \exp\left( -v+\frac{\gb\nu_p^2}{2}\left(Z_2 -\frac{u}{\sqrt{\gb}}\right)^2 \right)  \right]
= \frac{1}{\sqrt{1-\gb \nu_p^2}}\exp\left(-v+\frac{\nu_p^2}{2(1-\gb\nu_p^2)}u^2\right).
\end{align*}
Therefore, we conclude that 
\begin{align*}
\lim_{n\to\infty} \la\exp\left(-u S_n/\sqrt{n}-vT_n/n\right)\ra_{\wh{\mu}_{n,\gb,p}}
=\exp\left(-v+\frac{\nu_p^2}{2(1-\gb\nu_p^2)}u^2\right).
\end{align*}
The same argument works for pure imaginary exponents $u,v\in i \dR$. Thus, we see that $S_n/\sqrt{n}$ converges to $\N(0, \nu_p^2/(1-\gb\nu_p^2))$. Also $T_n/n$ converges to 1 under the measure $\wh{\mu}_{n,\gb,p}$, which yields the same convergence results under $\mu_{n,\gb,p}$.

\subsection{Proof of Theorem~\ref{thm:rhop} Part (b)}\label{sec:critical}

Consider the case where $\gb_c=\gb$ and $p\ge 2$. 
By Remark~\ref{rem:typical}, it suffices to prove the result for $\mvX=(X_1, X_2, \ldots,X_n)$ under the Gibbs measure~\eqref{eq:Gibbsmod}.
From Lemma~\ref{lem:condden} and CLT, we know that
\begin{align*}
\frac{\frac{1}{n}\sum_{i=1}^n Y_i-\E[Y_1]}{\sqrt{\var(Y_1)/n}}
&=
\frac{\frac{1}{n^{3/4}}\sum_{i=1}^n Y_i-n^{1/4}\E[Y_1]}{n^{-1/4}\sqrt{\var(Y_1)}}\\
&=
\frac{\frac{1}{n^{3/4}}\sum_{i=1}^n Y_i-\frac{1}{\sqrt{\gb_c}}\frac{Z^{(n)} U^{(n)}}{n^{1/4}}}{n^{-1/4}/\sqrt{\gb_c}} +o(1)
\implies \N(0,1).
\end{align*}
Thus, it suffices to consider the limit of $\frac{Z^{(n)} U^{(n)}}{n^{1/4}}$. Let $\phi:\dR\to\dR$ be a smooth, even function (that is, $\phi(t)=\phi(-t)$ for all $t\in\dR$) with compact support, then 
\begin{align*}
&\E[\phi(n^{-1/4}Z^{(n)} U^{(n)})]\\
&=\frac{2c_p}{\wh{Z}_{n,p}(\gb)}\int_0^\infty \int_0^\infty
\phi(n^{-1/4}zu)\\
&\qquad\qquad\exp\left( n\left( \psi_p\left(\sqrt{\frac{\gb}{n}}\frac{zu}{(1+\frac{|z|^p}{n})^{1/p}}\right)-\frac1p\log\left(1+\frac{|z|^p}{n}\right) \right) \right) \theta(u)\, dudz.
\end{align*}
By change of variable, $z\mapsto n^{\frac{1}{2p}}z$ and $u\mapsto n^{\frac14- \frac{1}{2p}}u$, the Taylor expansion of $\psi_p$, and Proposition~\ref{prop:decomp},
\begin{align*}
&\int_0^\infty \int_0^\infty
\phi(n^{-1/4}zu)
\exp\left( n\left( \psi_p\left(\sqrt{\frac{\gb}{n}}\frac{zu}{(1+\frac{|z|^p}{n})^{1/p}}\right)-\frac1p\log\left(1+\frac{|z|^p}{n}\right) \right) \right) \theta(u)\, dudz\\
&\qquad =\int_0^\infty \int_0^\infty
\phi(zu)
\exp\Bigg( 
\sqrt{n}\left( 
\frac12 z^2 u^2 - \frac1p z^p - \frac{p-2}{2p} u^{\frac{2p}{p-2}}
\right)\\
&\qquad\qquad-\left( 
\frac1p z^{p+2}u^2 + \frac{\kappa}{24}z^4 u^4 - \frac{1}{2p}z^{2p}
\right) + o(1)
\Bigg) 
g(n^{\frac{p-2}{4p}}u)n^{1/2} u^{\frac{p}{p-2}}\, dzdu
\end{align*}
where 
\[
\kappa_p=\gb_c^2 \cdot(3(\E Z_p^2)^2 - \E Z_p^4) = 3-\frac{\E Z_p^4}{(\E Z_p^2)^2} = 3-\frac{\Gamma(5/p)\gC(1/p)}{\Gamma(3/p)^2}>0.
\]
Here, we note that $-\gk_p$ is the excess kurtosis of the r.v.~$Z_p$. 
For fixed $u\ge 0$, let $z=u^{2/(p-2)}(1+n^{1/4} y)$, then
\begin{align*}
&\E[\phi(n^{-1/4}Z^{(n)} U^{(n)})]\\
& =\frac{2c_p}{\wh{Z}_{n,p}(\gb)}\int_0^\infty \int_{-n^{1/4}}^\infty
\phi(u^{p/(p-2)}(1+n^{1/4} y))\\
&\qquad \exp\Bigg( 
\sqrt{n}u^{2p/(p-2)}\left( 
\frac12 (1+n^{1/4} y)^2  - \frac1p (1+n^{1/4} y)^p - \frac{p-2}{2p} 
\right)\\
&\qquad-u^{4p/(p-2)}\left( 
\frac1p (1+n^{1/4} y)^{p+2} + \frac{\kappa}{24}(1+n^{1/4} y)^4 - \frac{1}{2p}(1+n^{1/4} y)^{2p}
\right) + o(1)
\Bigg) \\
&\qquad
g(n^{\frac{p-2}{4p}}u)n^{1/4} u^{\frac{p+2}{p-2}}\, dydu.
\end{align*}
Note that for fixed $u,y$, as $n\to\infty$, we get
\begin{align*}
\sqrt{n}u^{2p/(p-2)}\left( 
\frac12 (1+n^{1/4} y)^2  - \frac1p (1+n^{1/4} y)^p - \frac{p-2}{2p} 
\right)\to -\frac{p}{2}u^{2p/(p-2)}y^2.
\end{align*}
Let $\phi_0=1$, then it follows from Proposition~\ref{prop:decomp} and the DCT that
\begin{align*}
&\lim_{n\to \infty}\E[\phi(n^{-1/4}Z^{(n)} U^{(n)})]\\
&=\lim_{n\to \infty}\frac{\E[\phi(n^{-1/4}Z^{(n)} U^{(n)})]}{\E[\phi_0(n^{-1/4}Z^{(n)} U^{(n)})]}\\
&= \frac{1}{\cZ}\int_0^\infty \int_{-\infty}^\infty \phi(u^{p/(p-2)}) \exp\Bigg(  -\frac{p}{2}u^{2p/(p-2)}y^2 -u^{4p/(p-2)}\left(  \frac1p  + \frac{\kappa}{24} - \frac{1}{2p} \right) \Bigg)  u^{\frac{p+2}{p-2}}\, dydu\\
&= \frac{\sqrt{2\pi}}{\sqrt{p}\cZ}\int_0^\infty  \phi(u^{p/(p-2)}) \exp\Bigg( -u^{4p/(p-2)}\left(  \frac{1}{2p}  + \frac{\kappa}{24}  \right) \Bigg)  u^{\frac{2}{p-2}}\, du\\
&= \frac{\sqrt{2\pi}(p-2)}{p^{3/2}\cZ}\int_0^\infty  \phi(x) \exp\Bigg( -\left(  \frac1{2p}  + \frac{\kappa}{24} \right)x^4 \Bigg)  \, dx
\end{align*}
where 
\[
\cZ=\int_0^\infty \int_{-\infty}^\infty \exp\Bigg(  -\frac{p}{2}u^{2p/(p-2)}y^2 -u^{4p/(p-2)}\left(  \frac{1}{2p}  + \frac{\kappa}{24}  \right) \Bigg)  u^{\frac{p+2}{p-2}}\, dydu.
\]
For general smooth and compactly supported function $\phi$, we apply the same argument for $\wt{\phi}(t)=\phi(t)+\phi(-t)$. Therefore, we conclude that $\frac{1}{n^{3/4}}\sum_{i=1}^n Y_i$ converges in distribution to the distribution limit of  $\frac{1}{\sqrt{\gb_c}n^{1/4}}Z^{(n)} U^{(n)}$, whose density is given by
\[
C\exp\Bigg( -\gb_c^2\left(  \frac1{2p}  + \frac{\kappa}{24} \right)x^4 \Bigg)
\]
as desired.

\subsection{Proof of Theorem~\ref{thm:rhop} Part (c)}

By~\eqref{eq:ld-plimit} and Theorem~\ref{thm:genrho} (a), we see that the infimum of $I_F=I-F$ is strictly negative when $\gb>\gb_c$. Thus, $\lim_{n\to\infty}\frac1n \log\cZ_{n,p}(\gb)>0$.

We claim that 
\[
\psi_{p}\left(\sqrt{\gb}\cdot uv  \right)+\frac{1}{p}\log(1-u^{p})-\frac{p-2}{2p} v^{2p/(p-2)}
\]
has a unique maximizer $0<u_\ast<1,0< v_\ast<\infty$. The maximizers should satisfy
\begin{align*}
\psi_p'(\sqrt{\gb}uv)\sqrt{\gb}uv &= v^{2p/(p-2)},\text{ and }\\
\psi_p'(\sqrt{\gb}uv)\sqrt{\gb}uv &= \frac{u^p}{1-u^p}.
\end{align*}
From this, we obtain the equation
\begin{align}\label{eq:psimax}
\frac{(1+w^2)^{1/p}}{\sqrt{\gb} w\ }\cdot \psi_p'(\frac{\sqrt{\gb}w\ }{(1+w^2)^{1/p}})
=\frac{1}{\gb}(1+w^2)^{2/p},\qquad
w=v^{p/(p-2)}.
\end{align}
The left-hand side in~\eqref{eq:psimax} converges to $1/\gb_c$ as $w\to 0$ and the right-hand side equals $1/\gb$ when $w=0$. Since $\gb_c<\gb$, and $\frac{w}{(1+w^2)^{1/p}}, (1+w^2)^{2/p}$ are increasing in $w$ for $p\ge 2$, it follows from Lemma~\ref{lem:psidec} that there exists a unique nonzero solution $w_\ast$ to the equation~\eqref{eq:psimax}. Let $u_\ast=w_\ast^{2/p}(1+w_\ast^2)^{-1/p}$ and $v_\ast = w_\ast^{(p-2)/p}$, then 
\begin{align*}
\lim_{n\to\infty}\frac1n \log\cZ_{n,p}(\gb)
&=\sup_{0\le u< 1,w\ge 0 }\left\{ \psi_{p}\left(\sqrt{\gb}\cdot uv  \right)+\frac{1}{p}\log(1-u^{p})-\frac{p-2}{2p} v^{2p/(p-2)} \right\}\\
&= \psi_{p}\left(\sqrt{\gb}\cdot u_\ast v_\ast  \right)+\frac{1}{p}\log(1-u_\ast^{p})-\frac{p-2}{2p} v_\ast^{2p/(p-2)}.
\end{align*}

Let $(X_1, X_2 \ldots,X_n,Z^{(n)}, U^{(n)})$  be random variables with joint density
\[
f(x_1, x_2, \ldots,x_n,z,u)
=C \exp\left(\sqrt{\gb} zu \sum_{i=1}^n x_i - \frac{|z|^p}{p}\sum_{i=1}^n |x_i|^p +\log \theta(n^{\frac{p-2}{2p}}u)\right)\prod_{i=1}^n \rho_p(x_i).
\]
From the above computations, we see that $(Z^{(n)},U^{(n)})$ converges in distribution to a r.v.~supported on $(\pm w_\ast^{2/p}, w_\ast^{(p-2)/p})$ with probability $1/2$. Moreover, Lemma~\ref{lem:condden} tells us that $X_1, X_2, \ldots,X_n$ given $(Z^{(n)},U^{(n)})=(w_\ast^{2/p}, w_\ast^{(p-2)/p})$ are i.i.d.~with conditional density
\[
\exp\left(\sqrt{\gb} w_\ast \sum_{i=1}^n x_i - \frac{w_\ast^2}{p}\sum_{i=1}^n |x_i|^p-n\psi_p\left(\theta_\ast\right)\right) (1+w_\ast^2)^{n/p}\prod_{i=1}^n \rho_p(x_i).
\]
where $\theta_\ast = \sqrt{\gb}w_\ast(1+w_\ast^2)^{-1/p}$. By Lemmas~\ref{lem:psiibp} and~\ref{lem:condden}, we have
\begin{align*}
\E[X_1]
&=\int_{\dR} x\exp\left(\sqrt{\gb} w_\ast x - \frac{w_\ast^2}{p} |x|^p-\psi_p\left(\theta_\ast\right)\right) (1+w_\ast^2)^{1/p} \rho_p(x)\, dx\\
&=(1+w_\ast^2)^{-1/p} \int_{\dR} y\exp\left(\theta_\ast y - \frac{1}{p} |y|^p-\psi_p\left(\theta_\ast\right)\right) \, dy\\
&=(1+w_\ast^2)^{-1/p}\psi_p'\left(\theta_\ast\right)
= m_\ast,\\
\E[|X_1|^p] 
&=(1+w_\ast^2)^{-1} \int_{\dR} |y|^p\exp\left(\theta_\ast y - \frac{1}{p} |y|^p-\psi_p\left(\theta_\ast\right)\right) \, dy\\
&=(1+w_\ast^2)^{-1} (1+\theta_\ast\psi'_p(\theta_\ast))
=1.
\end{align*}
Here, we used the fact that $w_\ast$ is the unique positive solution to the equation~\eqref{eq:psimax}. From this and Remark~\ref{rem:typical}, we deduce that the magnetization is concentrated at $\E[X_1]= m_\ast$ when $Z^{(n)}\to w_\ast^{2/p}$. By conditioning on  $Z^{(n)}=-w_\ast^{2/p}, U^{(n)}=w_\ast^{(p-2)/p}$, we see that the magnetization is concentrated at $-m_\ast$ as desired.

\subsection{Proof of Theorem~\ref{thm:typical}}\label{ssec:typical}

Define 
\begin{align*}
(Y_1, Y_2, \ldots, Y_n) :\equald (X_1, X_2, \ldots, X_n) \mid (Z^{(n)}, U^{(n)})=(z,u)
\end{align*}
for $z\in \dR, u>0$ fixed. That is, the joint density of $Y_1, Y_2, \ldots, Y_n$ is given by
\begin{align*}
f_Y(y_1, y_2, \ldots,y_n)
&= \frac{f(y_1,\ldots,y_n, z,u)}{f_{Z^{(n)}, U^{(n)}}(z,u)}\\
&=\frac{\exp\left(\sqrt{\frac{\gb}{n}}\cdot  z u\sum_{i=1}^n y_i - \frac1{np}\abs{z}^p\sum_{i=1}^n |y_i|^p
\right)\, \prod_{i=1}^n \rho_p(y_i)}{\exp\left(n\left(
\psi_p\left(\sqrt{\frac{\gb}{n}}\frac{zu}{(1+\frac{|z|^p}{n})^{1/p}}\right)-\frac1p\log\left(1+\frac{|z|^p}{n}\right)
\right)\right)}.
\end{align*}
Since $Y_1, Y_2, \ldots,Y_n$ are i.i.d., the CLT tells us that
\begin{align*}
\frac{\frac{1}{\sqrt{n}}\sum_{i=1}^n Y_i-\sqrt{n}\E[Y_1]}{\sqrt{\var(Y_1)}}\implies \N(0,1).
\end{align*}
It is easy to see that $(Z_n,U_n)$ converges in distribution to a random vector $(Z,U)$ with joint density for $(Z,U)$ given by
\begin{align*}
f_{Z,U}(z,u) 
= C_1 \exp\left( \frac{\gb}{2\gb_c} z^2 u^2 -\frac{|z|^p}{p} \right) \theta(u),\qquad z\in\dR, u>0,
\end{align*}
where $C_1$ is the normalizing constant. Since  $\wh{\cZ}_{n,p}(\gb)$ is uniformly bounded in $n$ by the proof of Theorem~\ref{thm:rhop}, Part a, we can take a limit along a subsequence. From this and the fact that $X\cdot U\sim \N(0,1)$, one can see that the density of the product $Z\cdot U$ is
\begin{align*}
f_{Z\cdot U}(t)
=C_2\int_{0}^\infty \frac{1}{u}\exp\left(\frac{\gb}{2\gb_c}t^2\right)\exp\left(-\frac{|t|^p}{p u^p}\right)\theta(u)\, du = C_2 \exp\left(-\frac12 \left(1-\frac{\gb}{\gb_c}\right)t^2\right).
\end{align*}
Thus, $Z\cdot U \sim \N(0,\gb_c/(\gb_c-\gb))$.  By Lemma~\ref{lem:condden}, we have  $\lim_{n\to\infty}\sqrt{n}\E[Y_1]=\sqrt{\frac{\gb}{\gb_c^2}} \cdot ZU$ and $\lim_{n\to\infty}\var(Y_1) =\frac{1}{\gb_c}$. Thus, $\frac{1}{\sqrt{n}}(Y_1+Y_2+\cdots+Y_n)$ converges to a normal distribution with mean 0 and variance
\begin{align*}
\frac{1}{\gb_c}+\frac{\gb}{\gb_c^2}\frac{\gb_c}{\gb_c-\gb}
=\frac{1}{\gb_c-\gb}
\end{align*}
in distribution.

\subsection{Proof of Proposition~\ref{prop:selfnormal}}

Since
\[
\frac{n^{-1}\sum_{i=1}^{n}\gs_i^2}{\norms{\mvgs}_{n,p}^2}
\le n^{\tfrac{2}{p}-1},
\]
it suffices to consider the partition function
\begin{align*}
\widetilde{\cZ}_{n,p}(\gb) =
\E\left[\exp\left(\frac{\gb n}{2}\left(\frac{S_n}{T_n^{1/p}}\right)^2\right)\right]
\end{align*}
where $S_n = \frac{1}{n}\sum_{i=1}^n X_i$ and $T_n = \frac{1}{n}\sum_{i=1}^n |X_i|^p$. It was shown in~\cite[p.31]{PLS} that
\begin{align*}
\lim_{n\to\infty}\pr\left(\frac{S_n}{T_n^{1/p}}\ge x\right)^{1/n}
=\sup_{c\ge 0}\inf_{t\ge 0 }\E\left[\exp\left(t\left(cX - x \left(\frac{|X|^p}{p}+\frac{p-1}{p}c^{p/(p-1)}\right)\right)\right)\right]
\end{align*}
for all $x\ge 0$. Let $g(y)=\exp\left(\tfrac{\gb n}{2} y^2\right)$ and $Y_n=\frac{S_n}{T_n^{1/p}}$, then by symmetry we have
\[
\widetilde{\cZ}_{n,p}(\gb)
=\E[g(Y_n)]
=2\int_0^\infty g'(y)\pr(Y_n\ge y)\, dy.
\]
Thus, we get
\begin{align*}
&\lim_{n\to\infty}\frac1n \log\cZ_{n,p}(\gb) \\
&=\lim_{n\to\infty}\frac1n\log  \widetilde{\cZ}_{n,p}(\gb) \\
&=\sup_{y\ge 0 }\left\{
\frac{\gb y^2}{2}+
\sup_{c\ge 0}\inf_{t\ge 0 }\log\E\left[\exp\left(t\left(cX - y \left(\frac{|X|^p}{p}+\frac{p-1}{p}c^{p/(p-1)}\right)\right)\right)\right]
\right\}\\
&=\sup_{y,c\ge 0}\inf_{t\ge 0 }\left\{
\frac{\gb y^2}{2}+
\psi_{p}\left(\frac{ct}{(1+ty)^{1/p}}\right)-
\frac{1}{p}\log(1+ty)-\frac{p-1}{p}ty c^{p/(p-1)}
\right\}
\end{align*}
as desired.

\section{Proofs for the Generalized Hubbard--Stratonovich transform}\label{sec:GHSproof}

In this section, we prove the Generalized Hubbard--Stratonovich (GHS) transform (Theorem~\ref{thm:ghst}). The main ingredient of the GHS transform is the existence of a particular additive process, which will be proved in Theorem~\ref{thm:lgamma}. Based on the additive process, we construct a random variable $U_{p,q}$ such that $Z_p\cdot U_{p,q} = Z_q$ and provide a detailed analysis on the density of $U_{p,q}$ in Proposition~\ref{prop:decomp}.

Recall the collection of symmetric densities
\begin{align*}
\rho_{p}(x):=c_{p}\cdot e^{-\abs{x}^p/p},\quad x\in\dR
\end{align*} 
where $c_{p}:=\frac{p^{-1/p}}{2\gC(1+1/p)}$
for $p>0$. We use $Z_{p}$ to denote a typical random variable with density $\rho_{p}$.  Now, fix $p\ge 2$. Using Proposition~\ref{prop:decomp}, we can prove the GHS transform for $p>0$. 

\subsection{Proof of Theorem~\ref{thm:ghst}}

Recall that $Z_{2}\sim\N(0,1)$ satisfies $\E\exp(t Z_{2})=\exp(t^{2}/2)$ for all $t\in \dR$. By Proposition~\ref{prop:decomp}, there exist independent random variables $Z_{p}\sim \rho_{p}$ and $U\equald U_{p,2}$ such that $Z_{p}\cdot U\sim \N(0,1)$. Thus, we have
\begin{align*}
\exp\left(x^2y^{-2/p}\right)
&= \E\exp\left(\sqrt{2}xy^{-1/p}\cdot Z_2)\right)\\
&= \E_U \int_{\dR}c_{p}\exp\left(\sqrt{2}x\cdot y^{-1/p}z\cdot U -\abs{z}^p/p\right)\, dz\\
&= c_{p}y^{1/p}\E_U\int_{\dR}\exp\left(\sqrt{2}xzU-y\abs{z}^p/p\right)\, dz.
\end{align*}
Here we used a change of variable $z\to y^{1/p}z$ in the last equality.

\subsection{Proof of Theorem~\ref{thm:lgamma}}

Let $f:[0,\infty)\to[0,\infty)$ be a continuously differentiable, increasing function with $f(0)=0$. Let $(\cP_f, w_x)$ be a marked Poisson point process where $\cP_f$ is a Poisson point process with intensity $\mu_f(x) :=f'(x)/f(x)$, $x>0$, and the mark $w_x$ is an exponential random variable with mean $f(x)$. Define 
\begin{align*}
V_t^{(f)} 
= \sum_{x\in \cP_f\cap[0,t]}w_x
= \int_0^t w_x\, \cP_f(dx), \qquad t\ge 0,
\end{align*}
and $V_{t,s}^{(f)}:=V_s^{(f)} - V_t^{(f)}$ for $0\le t <s<\infty$. For an integer $k\ge 1$, let $f_k(x):=f(x/k)$ and $V^{(k)}_t$ be an independent copy of $V^{(f_k)}_t$. Note that
\begin{align*}
\E e^{i\xi V_t^{(f)}}
= \E\left(\prod_{x\in \cP_f\cap[0,t]}e^{i\xi w_x}\right)
&= \E\left( \exp\left( -\int_0^t \log(1-i\xi f(x))\,\cP_f(dx) \right) \right) \\
&= \exp\left( -\int_0^t \left(1-\frac{1}{1-i\xi f(x)}\right)\,\frac{f'(x)}{f(x)}\,dx \right)
= \frac{1}{1-i\xi f(t)}.
\end{align*} 
Define
\begin{align*}
U_t^{(m)}  := \sum_{k=1}^m (V_t^{(k)} - \E V_t^{(k)}).
\end{align*}
We assume that $\sum_{k=1}^\infty f(t/k)^2<\infty$ for each $t>0$. Then, $\E[U_t^{(m)}]=0$ and $\var(U_t^{(m)})<\infty$, which yields that $U_t^{(m)}\implies U_t$ as $m\to\infty$ in distribution, for all $t\ge 0$. Note that the characteristic function of $U_t$ can be computed as
\begin{align*}
\E e^{i\xi U_t}
=\prod_{k=1}^\infty\E e^{i\xi V_t^{(k)}}\cdot e^{-i\xi \E V_t^{(k)}} 
=\prod_{k=1}^\infty(1-i\xi f(t/k))^{-1}e^{-i\xi f(t/k)}.
\end{align*}
Let $f(x) := x/(1+x)$, then
\begin{align*}
\E e^{i\xi U_t}
=\prod_{k=1}^\infty\left(1-\frac{i\xi t}{k+t} \right)^{-1}e^{-i\xi t/(k+t)}.
\end{align*}
Note that 
\begin{align*}
e^{-\gc z}\prod_{k=1}^\infty (1+z/k)^{-1}e^{z/k}
=\frac{\gC'(z+1)}{\gC(z+1)}
= -\gc+\sum_{k=1}^\infty \frac{z}{k(k+z)},
\end{align*}
where $\gC$ is the Gamma function and $\gc$ is the Euler constant. It then follows that
\begin{align*}
\E e^{i\xi U_t}
&=\prod_{k=1}^\infty\left(1-\frac{i\xi t}{k+t} \right)^{-1}e^{-i\xi t/(k+t)}\\
&=\prod_{k=1}^\infty\left(1+(1-i\xi)t/k \right)^{-1}e^{(1-i\xi)t/k}\cdot \left(1+{t}/{k} \right)e^{-t/k} \cdot e^{i\xi t^2/(k(k+t))}\\
&=\frac{e^{\gc(1-i\xi)t}\gC((1-i\xi)t+1)}{e^{\gc t}\gC(t+1)} \prod_{k=1}^\infty e^{i\xi t^2/(k(k+t))}.
\end{align*}
Simplifying, we get 
\begin{align*}
\E e^{i\xi U_t}
&=\frac{\gC((1-i\xi)t+1)}{\gC(t+1)} \exp\left(i\xi t\left(-\gc+\sum_{k=1}^\infty \frac{t}{k(k+t)}\right)\right)\\
&=\frac{\gC((1-i\xi)t+1)}{\gC(t+1)} \exp\left(i\xi t\frac{\gC'(t+1)}{\gC(t+1)}\right)
=(1-i\xi)e^{i\xi}\cdot \frac{\gC((1-i\xi)t)}{\gC(t)} \exp\left(i\xi t\frac{\gC'(t)}{\gC(t)}\right).
\end{align*}
Let $X_t$ be the Gamma random variable with shape parameter $t$ and rate parameter $1$. Then
\begin{align*}
\E e^{-i\xi t \log X_t}
=\E X_t^{-i\xi t} = {\gC((1-i\xi)t)}/{\gC(t)},\quad
\E \log X_t = \frac{d}{du}\E e^{u\log X_t}|_{u=0} = {\gC'(t)}/{\gC(t)}.
\end{align*}
Let $Y$ be an exponential random variable with rate 1, independent of $(U_t)_t\ge 0$. Thus,
\begin{align*}
\E e^{i\xi (U_t+Y-1)}
&= e^{-i\xi}\cdot \E e^{i\xi Y}\cdot \E e^{i\xi U_t} \\
&= e^{-i\xi}\cdot (1-i\xi)^{-1}\cdot (1-i\xi)e^{i\xi}\cdot \frac{\gC((1-i\xi)t)}{\gC(t)}\cdot e^{i\xi t\gC'(t)/\gC(t)}\\
&= \frac{\gC((1-i\xi)t)}{\gC(t)}\cdot  e^{i\xi t\gC'(t)/\gC(t)}
=\E e^{-i\xi (t\log X_t-t\E \log X_t)}. 
\end{align*}
Thus, we have
\[
-U_t+1-Y \equald t\log X_t-t\E \log X_t, \text{ for all }t> 0.
\]
In particular, if we define the process
\begin{align}\label{eq:ytsum}
Y_t:= -U_t-Y + t\gC'(t+1)/\gC(t+1), \text{ for all } t\ge 0,
\end{align}
it has independent increments and $Y_t\equald t\log X_t$ for all $t> 0$.

From the proof, we can construct the marginals for the process $Y$ at any finitely many time points.  We get the existence of the process $Y$ over the positive real line by Theorem 9.7(ii) in~\cite[page 51]{satobook}, which states that if a system of probability measures $\{\mu_{s,t}:0\le s\le t<\infty\}$ satisfies
\begin{align*}
\mu_{s,t}\ast\mu_{t,u} &= \mu_{s,u}\text{ for all }0\le s\le t\le u<\infty,\\
\mu_{s,s}&=\delta_0 \text{ for } 0\le s<\infty,\\
\lim_{s\uparrow t}\mu_{s,t}&=\delta_0 \text{ for } 0< t<\infty,\\
\lim_{t\downarrow s}\mu_{s,t}&=\delta_0 \text{ for } 0\le s<\infty,
\end{align*}
then there exists an additive process $X_t$ such that the law of $X_t-X_s$ is $\mu_{s,t}$.

\subsection{Proof of Proposition~\ref{prop:decomp}}\label{sec:density}

According to the proof of Theorem~\ref{thm:lgamma}, for the existence of the density, it suffices to show that 
\begin{align*}
\prod_{k=1}^\infty\left(1-\frac{\i\xi t}{k+t} \right)^{-1}e^{-\i\xi t/(k+t)}
\end{align*}
converges absolutely. Indeed, let $a_k=\frac{\i\xi t}{k+t}$, then
\begin{align*}
\left|\frac{e^{-a_k}}{1-a_k}-1\right|
=\frac{|e^{-a_k}-1+a_k|}{1-|a_k|}
\le c\frac{|a_k|^2}{1-|a_k|}
\end{align*}
for large $k$ and it is summable. Thus, the infinite product converges absolutely. Since $Y_t$ is defined by the independent sum of nonnegative random variables (see~\eqref{eq:ytsum}) and one of them is an exponential random variable, it follows that the density is strictly positive. 

Let $p>q>0$ be fixed. Let $\theta:=\theta_{p,q}$ be the density of $U_{p,q}$. We can explicitly write down an expression for $\theta$ using M--Wright functions.  

Using Mellin transform and the facts that $\abs{Z_{p}}^p/p\sim \text{Gamma}(1/p,1)$, $\abs{Z_p}\cdot U_{p,q}\equald \abs{Z_q}$, one can check that the random variable $V:=p^{1/p}q^{-1/q}\cdot U_{p,q}$ has density given by
\[
f_{V}(x)=\frac{\gC(1/p)}{\gC(1/q)}\cdot \frac{1}{2\pi \i}\int_{1-\i\infty} ^{1+\i\infty}x^{-z}\cdot   \frac{\gC(z/q)}{\gC(z/p)}\,dz,\text{ for } x>0.
\]
Gamma function has simple poles at all non-positive integers $0,-1,-2,\ldots$. Thus, evaluating the contour integral as a sum over residues we get  for $x>0$,
\[
f_V(x) = \frac{\gC(1/p)}{\gC(1/q)}  \cdot \sum_{k=1}^{\infty} \frac{x^{qk}}{\gC(-\nu k)}\cdot \lim_{z\to -qk}(z+qk)\gC(z/q) = \frac{q\gC(1/p)}{\gC(1/q)}  \cdot \sum_{k=1}^{\infty} \frac{(-x^q)^k}{k!\cdot\gC(-\nu k)},
\]
where $\nu:=q/p\in(0,1)$. In particular, we have for $x>0$,
\[
\theta(x) = \frac{qp^{1/p}\gC(1/p)}{q^{1/q}\gC(1/q)}  \cdot \sum_{k=1}^{\infty} \frac{(-r/\nu)^k}{k!\cdot\gC(-\nu k)} = \frac{pc_q}{c_p}\cdot rM_{\nu}(r/\nu)
\]
where $r=x^{q}/p^{1-\nu}$ and $$M_{\nu}(x):=\sum_{n=0}^{\infty}\frac{(-z)^n}{n! \cdot \gC(1-\nu-\nu n)}$$ is the M--Wright function, a particular example of the Wright function of the second kind. Asymptotic representation of M--Wright functions for large arguments using a saddle-point analysis is well-known in the literature (see~\cite[eq.~4.3]{MMP10} and references within),
\begin{align}\label{eq:mwright}
M_\nu(r/\nu) \cdot r^{1-1/(2(1-\nu))}\cdot \exp((1-\nu)/\nu \cdot r^{1/(1-\nu)})  \to \frac{1}{\sqrt{2\pi(1-\nu)}}
\end{align}
as $r\to\infty$. Simplifying, we get the result.

\section{Proofs for the $\ell^p$ Self-Scaled Model }\label{sec:LargeDev}

In this section, we present the proofs of Theorems~\ref{thm:gen},~\ref{thm:genrho}, and~\ref{thm:genrho2}, where the limiting free energy of the $\ell^p$ self-scaled model is analyzed. Note that, Theorems~\ref{thm:rhop} and~\ref{thm:rhop12} follow from the proof of Theorem~\ref{thm:genrho}, and Theorem~\ref{thm:rhop2} follows from the proof of Theorem~\ref{thm:genrho2}.

\subsection{Proof of Theorem~\ref{thm:gen}}

Since $Q_n$ has the large deviation property (which follows from the moment assumption and~\cite[Theorem~II.4.1]{Ellis} and $\sup_{(x,y)\in\cX} F(x,y)<\infty$, it follows from~\cite[Theorem~II.7.1]{Ellis} that 
\[
\lim_{n\to\infty}\frac1n\log\cZ_{n,p}(\gb;\rho) = \lim_{n\to\infty}\frac1n\log\widetilde{\cZ}_{n,p}(\gb;\rho) = \sup_{0\le |x|\le y^{1/p}}\left(F(x,y)-I(x,y)\right),
\]
where $\widetilde{\cZ}_{n,p}(\gb;\rho)$ is defined as in~\eqref{eq:ztildegen}. Since $F$ is continuous on $\cX$, $\int_{\cX}e^{nF(x,y)}\, Q_n(dx,dy)$ is finite, and
\[
\lim_{L\to\infty}\limsup_{n\to\infty}\frac{1}{n}\log\int_{F\ge L}e^{nF(x,y)}\, Q_n(dx,dy)=-\infty,
\]
we apply~\cite[Theorem~II.7.2]{Ellis} to conclude the proof.

\subsection{Proof of Theorem~\ref{thm:genrho}}

\subsubsection{Part (a)}\label{sec:lowtemp}

Let $X,X_{1},X_{2},\ldots,X_{n}$ be i.i.d.~random variables from $\rho$. As in the proof of Theorem~\ref{thm:rhop} (a), we consider
\begin{align*}
\widehat{\cZ}_{n,p}(\gb;\rho)
= \E_\rho\left[\bigl(T_{n}/n\bigr)^{-1/p} \exp\left(\frac{\gb}{2n}\cdot  S_n^2\cdot \bigl(T_n/n\bigr)^{-2/p}\right)\right]
\end{align*}
where
\[
S_{n}:=\sum_{i=1}^{n}X_{i}\text{ and } T_{n}:=\sum_{i=1}^{n}\abs{X_{i}}^{p}.
\]
Then, it follows from Theorem~\ref{thm:ghst} that
\begin{align*}
\widehat{\cZ}_{n,p}(\gb;\rho)
&= c_p \iint \E_{U}\left[\exp\left(\sqrt{\gb n}S_n \cdot z\cdot U - T_n |z|^p/p\right)\right]\, dz\prod_{i=1}^n d\rho(x_i)\\
&= c_p \iint\left(\int \exp\left(\sqrt{\frac{\gb }{n}}x \cdot z\cdot u - \frac{1}{np}|x|^p |z|^p\right)d\rho(x)\right)^n \theta(u)\, du dz\\
&= c_p \iint \exp\left(n\psi_\rho(\sqrt{\gb}zu, -|z|^p/p)\right) \theta(n^{\frac12-\frac{1}{p}} u)n^{\frac12-\frac{2}{p}}\, du dz
\end{align*}
where $U\equald U_{p,2}$ is independent of $X_{i},i=1,2,\ldots,n$ such that $X\cdot U\sim\N(0,1)$. Since $\log\theta(n^{\frac12-\frac{1}{p}} u)=-\frac{p-2}{2p}n u^{2p/(p-2)}+o(1)$ by Proposition~\ref{prop:decomp}, we get
\begin{align*}
\lim_{n\to\infty}\frac{1}{n}\log \widehat{\cZ}_{n,p}(\gb;\rho)
=\sup_{z,u}\left\{\psi_{\rho}(\sqrt{\gb}zu,-|z|^p/p)-\frac{p-2}{2p}u^{\frac{2p}{p-2}}\right\}.
\end{align*}
Note that
\[
\pr_\rho(|X_1|\le x)\le \frac{\E_\rho|X_1|^{-\ga}}{x^{-\ga}}\le C x^{\ga}.
\]
Choose $k>\frac{pq}{\ga}$, then for all $n>k$, we have
\begin{align}\label{eq:Tnbdd}
\begin{split}
\E_{\rho}[|T_n|^{-q}]
&\le  \E_{\rho}[|\max_{i}|X_i|^{-pq}]\\
&= pq \int_0^{\infty} x^{-pq-1}\pr_\rho(|X_1|\le x)^n\, dx\\
&\le pq \int_0^{\infty} x^{-pq-1}\pr_\rho(|X_1|\le x)^k\, dx
\le C \int_0^{\infty} x^{-pq+\ga k-1}\, dx<\infty.
\end{split}
\end{align}
Let $\widehat{\gb}<\gb$ and $r=\gb/\widehat{\gb}$, then we have
\[
\log\widehat{\cZ}_{n,p}(\widehat{\gb};\rho)
=\frac{1}{r}\log \cZ_{n,p}(\gb;\rho) +\left(1-\frac{1}{r}\right)\log \E_{\rho}[|T_n|^{-q}]
\]
where $q=\frac{r}{p(r-1)}=\frac{\gb}{p(\gb-\widehat{\gb})}$. Letting $\widehat{\gb}\to \gb$ (that is, $r\to 1$),  it follows from the continuity of $\psi$ that 
\begin{align*}
\lim_{n\to\infty}\frac{1}{n}\log \cZ_{n,p}(\gb;\rho)
&=\lim_{\widehat{\gb}\to \gb}\lim_{n\to\infty}\frac{1}{n}\log\widehat{\cZ}_{n,p}(\widehat{\gb};\rho)\\
&=\lim_{\widehat{\gb}\to \gb}\sup_{z,u}\left\{\psi_{\rho}(\sqrt{\widehat{\gb}}zu,-|z|^p/p)-\frac{p-2}{2p}u^{\frac{2p}{p-2}}\right\}\\
&=\sup_{z,u}\left\{\psi_{\rho}(\sqrt{\gb}zu,-|z|^p/p)-\frac{p-2}{2p}u^{\frac{2p}{p-2}}\right\}.
\end{align*}
Define the function $f_\gb:[0,\infty)^2\to\dR$ given by
\[
f_\gb(z,w):= \psi_{\rho}(\sqrt{\gb}zw,-z^p/p) -\frac{p-2}{2p}\cdot w^{2p/(p-2)}.
\]
One can see that $\sup_{z,w\ge 0}f_\gb(z,w)> 0$ when $\gb> \gb_c(\rho)$. 
Indeed, fix $\gb>\gb_c(\rho)$, a constant $a>0$ and consider the function
\[
g(t)=p\cdot f_\gb(t^{2/p},at^{1-2/p}),\ t\ge 0.
\]
It is easy to check that $g(0)=g'(0)=0$ and 
\[
g''(0)=-2+p\gb \gs_X^2 \cdot a^2 - (p-2)\cdot a^{2p/(p-2)},
\]
where $\gs_X^2=\E[X^2]$. In particular, $g''(0)>0$ iff 
\[
\gb \gs_X^2 \cdot a^2 > 2/p + (1-2/p)\cdot a^{2p/(p-2)}.
\]
We can take $a_0=1$ so that 
\[
\frac{2/p + (1-2/p)\cdot a_0^{2p/(p-2)}}{a_0^2}=1<\gb \gs_X^2.
\]
Thus, there exists $t>0$ such that $f_\gb(t^{2/p},t^{1-2/p})>0$.

\subsubsection{Part (b)}

As in the proof of Theorem~\ref{thm:genrho}, it suffices to consider 
\begin{align*}
\widehat{\cZ}_{n,2}(\gb;\rho)
= \E_\rho\left[\bigl(T_{n}/n\bigr)^{-1/2} \exp\left(\frac{\gb}{2n}\cdot  S_n^2\cdot \bigl(T_n/n\bigr)^{-1}\right)\right].
\end{align*}
Using the GHS transform, we get
\begin{align*}
\widehat{\cZ}_{n,2}(\gb;\rho)
&= \E_\rho\left[\bigl(T_{n}/n\bigr)^{-1/2} \exp\left(\sqrt{\frac{\gb}{n}}\cdot  S_n\cdot \bigl(T_n/n\bigr)^{-1/2}\cdot \eta\right)\right]\\
&= c_2 \int \E_\rho\left[ \exp\left(\sqrt{\frac{\gb}{n}}\cdot  S_n\cdot y -\frac12 \frac{T_n}{n}y^2 \right)\right]\, dy\\
&= c_2 \sqrt{n} \int \left(\E_\rho\left[ \exp\left(\sqrt{\gb}\cdot  X\cdot y -\frac12 X^2\cdot y^2 \right)\right]\right)^n\, dy
\end{align*}
where $\eta\sim \N(0,1)$. Thus, we have
\begin{align*}
\lim_{n\to\infty}\frac{1}{n}\log \widehat{\cZ}_{n,2}(\gb;\rho)
=\sup_y \log \E_\rho\left[\exp\left(\sqrt{\gb} y\cdot X - \frac12 y^2 X^2\right)\right].
\end{align*}
Let $f(z)=\psi_{\rho}(\sqrt{\gb}z, -z^2/2)$. One can see that $\sup_{z\ge 0}f(z)>0$ for $\gb>\gb_c(\rho)=1$ because $f'(0)=0$ and $f''(0)=(\gb-1)\E|X|^2>0$. On the other hand, if $\gb<1$, then
\begin{align*}
\psi_{\rho}(\sqrt{\gb}z, -z^2/2) &=\log\E_\rho[\cosh(\sqrt{\gb}zX)\exp(-z^2 X^2/2)] \\
& \le\log\E_\rho[\exp((\gb-1)z^2 X^2/2)]\le 0
\end{align*}
for all $z\ge 0$. Thus, $\sup_{z\ge 0}f(z) = f(0)=0$.

\subsubsection{Part (c)}
By~\cite[p.31]{PLS} (as in the proof of Theorem~\ref{thm:rhop} (b)), we have
\begin{align}\label{eq:genrholim}
\lim_{n\to\infty}\frac1n \cZ_{n,p}(\gb;\rho) 
&=\lim_{n\to\infty}\frac1n \widetilde{\cZ}_{n,p}(\gb;\rho) \nonumber\\
&=\sup_{y\ge 0 }\left\{
\frac{\gb y^2}{2}+
\sup_{c\ge 0}\inf_{t\ge 0 }\log\E_\rho\left[\exp\left(t\left(cX - y \left(\frac{|X|^p}{p}+\frac{p-1}{p}c^{p/(p-1)}\right)\right)\right)\right]
\right\}\nonumber\\
&=\sup_{y\ge 0 }\left\{
\frac{\gb y^2}{2}+
\sup_{c\ge 0}\inf_{t\ge 0 }
\left\{
\log\E_\rho\left[\exp\left(ct X - ty \frac{|X|^p}{p}\right)\right]-\frac{p-1}{p}ty c^{p/(p-1)}
\right\}
\right\}\nonumber\\
&=\sup_{y,c\ge 0}\inf_{t\ge 0 }\left\{
\frac{\gb y^2}{2}+
\psi_{\rho}\left(ct, -{ty}/{p}\right)-\frac{p-1}{p}ty c^{p/(p-1)}
\right\}
\end{align}
where $\widetilde{\cZ}_{n,p}(\gb;\rho)$ is defined as in~\eqref{eq:ztildegen}.

Let $Z\sim \rho_p$, then by the GHS transform we have $Z \equald \eta \cdot U$ where $\eta\sim \N(0,1)$ and $\eta \perp U$. Define $\widetilde{U}=n^{\frac{p-2}{2p}}U\cdot \vone_{A_n}$ where $A_n=\{U\le k n^{\frac{2-p}{2p}}\}$ and $k>0$. Then,
\begin{align*}
\E[\widehat{\cZ}_{n,p}(\gb \widetilde{U}^2;\rho)]
&= \E\left[\bigl(T_{n}/n\bigr)^{-1/p} \exp\left(\frac{\gb}{2n}\cdot \widetilde{U}^2 \cdot  S_n^2\cdot \bigl(T_n/n\bigr)^{-2/p}\right)\right]\\
&= \E\left[\bigl(T_{n}/n\bigr)^{-1/p} \exp\left(\sqrt{\frac{\gb}{n}}\cdot \widetilde{U} \cdot  S_n\cdot \bigl(T_n/n\bigr)^{-1/p}\cdot \eta\right)\right]\\
&= \E\left[\bigl(T_{n}/n\bigr)^{-1/p} \exp\left(\sqrt{\gb}\cdot U \cdot  S_n\cdot \bigl(T_n\bigr)^{-1/p}\cdot \eta\right); A_n\right]\\
&\qquad+ \E\left[\bigl(T_{n}/n\bigr)^{-1/p} ; A_n^c\right]\\
&= \int \left(\E\left[ \exp\left(\sqrt{\gb} z X - \frac{1}{p}|z|^p |X|^p\right)\right]\right)^n\, dz
+ \E\left[\bigl(T_{n}/n\bigr)^{-1/p} ; A_n^c\right]\\
&\qquad+ \E\left[\bigl(T_{n}/n\bigr)^{-1/p} \exp\left(\sqrt{\gb}\cdot U\cdot \eta \cdot  S_n\cdot \bigl(T_n\bigr)^{-1/p} \right); A_n^c\right].
\end{align*}
By~\eqref{eq:Tnbdd}, we have
\begin{align*}
\E\left[\bigl(T_{n}/n\bigr)^{-1/p} ; A_n^c\right] \le  n^{\frac{1}{p}}\E\left[(T_{n})^{-1/p} \right]\le C n^{\frac{1}{p}}.
\end{align*}
Let $r>1$, then 
\begin{align*}
&\E\left[\bigl(T_{n}/n\bigr)^{-1/p} \exp\left(\sqrt{\gb}\cdot U\cdot \eta \cdot  S_n\cdot \bigl(T_n\bigr)^{-1/p} \right); A_n^c\right]\\
&\qquad \le n^{\frac{1}{p}}\E\left[(T_{n})^{-\frac{r}{p(r-1)}}\right]^{1-\frac{1}{r}} \E\left[\exp\left(r\sqrt{\gb}\cdot U\cdot \eta \cdot  S_n\cdot \bigl(T_n\bigr)^{-1/p} \right); A_n^c\right]^{1/r}.
\end{align*}
Since $S_n (T_n)^{-1/p}\le n^{1-1/p}$, we get 
\begin{align*}
\E\left[\exp\left(r\sqrt{\gb}\cdot U\cdot \eta \cdot  S_n\cdot \bigl(T_n\bigr)^{-1/p} \right); A_n^c\right]
\le \E_{\eta}\left[\int_{kn^{\frac{2-p}{2p}}}^\infty \exp\left(r\sqrt{\gb}n^{1-1/p} \eta x  +\log \theta(x)\right)\, dx\right].
\end{align*}
It follows from Proposition~\ref{prop:decomp} that 
\begin{align*}
r\sqrt{\gb}n^{1-1/p} \eta x  +\log \theta(x)
=r\sqrt{\gb}n^{1-1/p} \eta x -\frac{2-p}{2p}x^{\frac{2p}{2-p}}+ o(1),
\end{align*}
which implies that 
\begin{align*}
\E_{\eta}\left[\int_{kn^{\frac{2-p}{2p}}}^\infty \exp\left(r\sqrt{\gb}n^{1-1/p} \eta x  +\log \theta(x)\right)\, dx\right]<\infty.
\end{align*}
Thus,
\begin{align*}
\lim_{n\to\infty}\frac{1}{n}\log\E[\widehat{\cZ}_{n,p}(\gb \widetilde{U}^2;\rho)]
&=\lim_{n\to\infty}\frac{1}{n}\log\int \left(\E\left[ \exp\left(\sqrt{\gb} z X - \frac{1}{p}|z|^p |X|^p\right)\right]\right)^n\, dz\\
&=\lim_{n\to\infty}\frac{1}{n}\log\int\exp\left(n\psi_{\rho}(\sqrt{\gb}z,-|z|^p/p)\right)\, dz\\
&=\sup_{z}\psi_{\rho}(\sqrt{\gb}z,-|z|^p/p).
\end{align*}
Let $F_{n,p}(x;\rho):=\frac{1}{n}\log \widehat{\cZ}_{n,p}(x;\rho)$, then 
\begin{align}\label{eq:FnyU}
\sup_{z}\psi_{\rho}(\sqrt{y}z,-\frac{1}{p}|z|^p)
&=\lim_{n\to\infty}\frac{1}{n}\log \E\left[\exp\left(nF_{n,p}(y\widetilde{U}^2;\rho)\right)\right]\nonumber\\
&=\lim_{n\to\infty}\frac{1}{n}\log\int_{0}^{kn^{\frac{2-p}{2p}}}\exp\left(nF_{n,p}(y n^{\frac{2-p}{p}}z^2;\rho)+\log\theta(z)\right)\, dz\nonumber\\
&=\lim_{n\to\infty}\frac{1}{n}\log\int_{0}^{k}\exp\left(nF_{n,p}(y z^2;\rho)+\log\theta(n^{\frac{2-p}{2p}}z)\right)\, dz\nonumber\\
&=\lim_{n\to\infty}\frac{1}{n}\log\int_{0}^{k}\exp\left(n\left(F_{n,p}(y z^2;\rho)-\frac{2-p}{2p}z^{\frac{2p}{2-p}}\right)\right)\, dz.
\end{align}
By~\eqref{eq:genrholim}, we know that $F_{n,p}$ converges to a well-defined function $F_p$ as $n\to\infty$. Thus,  from equation~\eqref{eq:FnyU} we get that
\[
\sup_{z\ge 0 }\psi_{\rho}(\sqrt{y}z,-|z|^p/p)
=\sup_{x\ge 0}\left\{F_p(x^2 y ;\rho)-\frac{2-p}{2p}x^{\frac{2p}{2-p}}\right\}.
\]
Letting $y = \gb/x^2$, we get
\begin{align*}
F_p(\gb;\rho)\le \sup_{z\ge 0 }\psi_{\rho}\left({\sqrt{\gb}z}/{x},-|z|^p/p\right)+\frac{2-p}{2p}x^{\frac{2p}{2-p}}
\end{align*}
for all $x>0 $. Thus,
\begin{align*}
F_p(\gb;\rho)\le \inf_{x> 0 }\sup_{z\ge 0 }\left\{\psi_{\rho}\left({\sqrt{\gb}z}/{x},-|z|^p/p\right)+\frac{2-p}{2p}x^{\frac{2p}{2-p}}\right\}.
\end{align*}

\subsection{Proof of Theorem~\ref{thm:genrho2}}
Since
\[
\cZ_{n,p}(\gb;\rho) 
=\E\left[\frac{\gb}{n}\cdot \frac{\sum_{i<j}X_i X_j }{\norms{\mvX}^2_{n,p}}\right]
=\E\left[\frac{\gb}{n^{1-2/p}}\cdot \frac{\sum_{i<j}X_i X_j }{(\sum_i |X_i|^p)^{2/p}}\right]
\]
where $X_1, X_2, \ldots,X_n$ be an i.i.d.~sequence with common density $\rho$,
\[
\limsup_{n\to\infty}n^{1-2/p} \log\cZ_{n,p}(\gb;\rho)
\le \gb\sup\left\{\sum_{i<j}x_i x_j : \sum_{i=1}^\infty |x_i|^p=1\right\}.
\]
Let
\[
B(n,p):=\sup \left\{\sum_{1\le i<j\le n}x_i x_j : \sum_{i=1}^n |x_i|^p=1, x_i>0\right\}.
\]
Fix an integer $n\ge 2$. To find $B(n,p)$, we consider 
\[
F(x_1, x_2, \ldots,x_n,\lambda) = \sum_{1\le i<j\le n}x_i x_j - \lambda \left(\sum_{i=1}^n|x_i|^p - 1\right)
\]
and
\[
\frac{\partial}{\partial x_k}F(\mvx,\lambda)
=\sum_{i=1}^n x_i - (\lambda p x_k^{p-1}+x_k) = 0
\]
for all $k=1,2,\ldots,n$. If $\lambda =0$, then $\sum_i x_i = x_k$ for all $k$, which implies $x_k=0$ for all $k$. This is a contradiction, so $\lambda\neq 0$. In this case, for $c=\sum_{i}x_i$, $x_1, x_2, \ldots,x_n$ are solutions to the equation $\lambda p x^{p-1}+x-c=0$. Let $g(t)=\lambda p x^{p-1}+x-c$, then $g'(t) = \lambda p (p-1)x^{p-2}+1$. If $\lambda >0$, then there exists $t_\ast =(\lambda p (1-p))^{1/(2-p)}>0$ such that $g$ is decreasing on $(0,t_\ast)$ and increasing on $(t_\ast,\infty)$. Thus, there are at most two solutions to $g(t)=0$. If $\lambda<0$ then $f$ is increasing. Thus, there exists only one solution for $g(t)=0$. Therefore $B(n,p)$ attains its maximum when $\{x_i\}\subset\{a,b\}$ for some $a,b>0$.

Suppose that $B(n,p)$ attains its maximum when there are $k$ many $a$ and $l$ many $b$ among $x_1, x_2,\ldots, x_n$. That is, 
\begin{align*}
B(n,p)
=\max \left\{\sum_{1\le i<j\le n}x_i x_j : \sum_{i=1}^n |x_i|^p=1, x_i>0\right\}
=\binom{k}{2}a^2 + kl ab + \binom{l}{2}b^2
\end{align*}
given $ka^p+lb^p=1$. Without loss of generality, let $a>b$. Since
\begin{align*}
B(n,p)
=\frac{\binom{k}{2}a^2 + kl ab + \binom{l}{2}b^2}{(ka^p +l b^p)^{2/p}}
=\frac{\binom{k}{2} + kl (b/a) + \binom{l}{2}(b/a)^2}{(k +l (b/a)^p)^{2/p}}
\end{align*}
we consider
\begin{align*}
h_{k,l}(t) 
&= \log\left(\frac{k(k-1)+2kl t +l(l-1)t^2}{(k+lt^p)^{2/p}}\right)\\
&= \log(k(k-1)+2kl t +l(l-1)t^2) - \frac{2}{p}\log(k+lt^p)
\end{align*}
for $0<t<1$. Assume $k,l\ge 1$. To find the maximum of $h_{k,l}(t)$ in $t$, we consider
\begin{align*}
h_{k,l}'(t)
&= \frac{2kl+2l(l-1)t}{k(k-1)+2kl t + l(l-1)t^2} - \frac{2}{p}\frac{pl t^{p-1}}{k+lt^p}\\
&= \frac{2}{(k(k-1)+2kl t+ l(l-1)t^2)(k+lt^p)}((kl+l(l-1)t)(k+lt^p)-(k(k-1)\\
&\qquad +2kl t + l (l-1)t^2)lt^{p-1})\\
&= \frac{2kl}{(k(k-1)+2kl t +l(l-1)t^2)(k+lt^p)}(k+(l-1)t-(k-1)t^{p-1}-lt^p).
\end{align*}
Note that 
$$
k(k-1)+2kl t +l(l-1)t^2 = ((k-1)+(l-1)t)^2 + (k-1)+2(k+l-1)t+(l-1)t^2> 0.
$$
Let $u_{k,l}(t)=k+(l-1)t-(k-1)t^{p-1}-lt^p$, then 
\[
u_{k,l}''(t) = (1-p)t^{p-3}((p-2)(k-1)-plt)<0
\]
and so $u_{k,l}$ is concave. Since $u_{k,l}(1)=0$ and $\lim_{t\to 0}u_{k,l}(t)=-\infty$, there is at most one $t_0$ in $[0,1]$ such that  $u_{k,l}(t_0)=h_{k,l}'(t_0)=0$. Since $h_{k,l}(t)<0$ for $t\in[0,t_0)$ and $h_{k,l}(t)>0$ for $t\in(t_0,1]$ so $t_0$ is not a maximizer. Therefore, there is no maximizer $t$ in $[0,1]$. This implies that $k=0$ or $l=0$ and so $B(n,p) = \binom{n}{2}n^{-2/p}=\frac12 n^{1-2/p}(n-1)$.

Let $v_p(t)=(1-2/p)\log t + \log (t-1)$ for $t>1$. Then,
\begin{align*}
v_p'(t) 
=\left(1-\frac{2}{p}\right)\cdot \frac{1}{t}-\frac{1}{t-1}
=\frac{1}{pt(t-1)}((p-2)(t-1)+pt)
=\frac{1}{pt(t-1)}(2(p-1)t-(p-2)).
\end{align*}
Thus, $v_p(t)$ is increasing on $(1,(2-p)/(2(1-p)))$ and decreasing on $((2-p)/(2(1-p)),\infty)$. For each $k\ge 2$, we define $p_k\in(0,1)$ by $v_{p_k}(k)=v_{p_k}(k+1)$. Then, $p_k=2\log(1+1/k)/\log(1+2/(k-1))$ and
\[
\max\{v_p(n):n\ge 2 \text{ integer}\} = v_p(k)
\]
if $p\in[p_{k-1},p_k]$. 
Therefore, we conclude that 
\begin{align*}
\sup\left\{\sum_{i<j}x_i x_j : \sum_{i=1}^\infty |x_i|^p=1\right\}
&=\max\{B(n,p):n\ge 2 \text{ integer}\}= \frac{1}{2}k^{1-2/p}(k-1)
\end{align*}
if $p\in[p_{k-1},p_k]$. 

Now, it suffices to show that
\[
\liminf_{n\to\infty}n^{1-2/p} \log\cZ_{n,p}(\gb;\rho)
\ge \frac{\gb}{2}k^{1-2/p}(k-1).
\]
Fix $p\in[p_{k-1},p_k]$. Consider an i.i.d.~sequence of random variables, say $X_1,X_2,\ldots,X_n$, with common density $\rho$.\\

{\noindent\bf Under Assumption~\ref{assum1}}
Fix $C_1,C_2,C_3>0$ and define 
\begin{align*}
A :=\{ C_1 (n/k)^{1/p}\le X_{i}\le C_2 (n/k)^{1/p}\text{ for }1\le i\le k;  |X_{j}|<C_3 \text{ for }k<j\le n \}.
\end{align*}
On the event $A$,
\begin{align*}
\norms{\mvX}_{n,p}^2
=\left(\frac1n \sum_{i=1}^n |X_i|^p\right)^{2/p}
\le \left(C_2^p+C_3^p\right)^{2/p}
\end{align*}
and
\begin{align*}
\sum_{i<j}X_i X_j
&\ge \frac{k(k-1)}{2}C_1^2 (n/k)^{2/p}
- nk C_1 C_3 (n/k)^{1/p} -\frac12 C_3^2 n^2.
\end{align*}
So,
\begin{align*}
n^{1-2/p}\log\cZ_{n,p}(\gb;\rho)
&\ge  n^{1-2/p}\log\E\left[\exp\left(\frac{\gb}{n}\frac{\sum_{i<j}X_i X_j}{\norms{\mvX}_{n,p}^2}\right);A\right]\\
&\ge \frac{\gb}{(C_2^p+C_3^p)^{\frac{2}{p}}}\Bigg( \frac{k^{1-2/p}(k-1)}{2}C_1^2 - (nk)^{1-1/p}C_1 C_3 - \frac12 n^{2(1-1/p)}C_3^2 \Bigg)\\ 
&\qquad\qquad  +n^{1-2/p}\log\P(A).
\end{align*}
From Assumption~\ref{assum1}, choose $C_1=C_1(n),C_2=C_2(n)$ that satisfy $C_1,C_2\to\infty$, $C_1/C_2\to1$ as $n\to\infty$, and 
\[
\lim_{n\to\infty}n^{1-2/p}\left|\log\P\left(C_1 (n/k)^{1/p}\le X\le C_2 (n/k)^{1/p}\right)\right|=0.
\]
For fixed $C_3\in(0,\infty)$, we then have
\begin{align*}
\limsup_{n\to\infty} n^{1-2/p} |\log \P(A)|
&\le k\limsup_{n\to\infty} n^{1-2/p} \left|\log\P\left(C_1 (n/k)^{1/p}\le X\le C_2 (n/k)^{1/p}\right)\right|\\
&\qquad +\limsup_{n\to\infty} n^{2-2/p}
|\log \P(|X|\le C_3)|
= 0.
\end{align*}
Combining these, we get
\begin{align*}
\liminf_{n\to\infty} n^{1-2/p}\log\cZ_{n,p}(\gb;\rho) \ge \frac{C_1^2}{(C_2^p+C_3^p)^{2/p}}\cdot \frac{\gb}{2} k^{1-2/p}(k-1).
\end{align*}
Letting $n\to\infty$, we conclude that 
\[
\liminf_{n\to\infty}n^{1-2/p} \log\cZ_{n,p}(\gb;\rho)
\ge  \frac{\gb}{2}k^{1-2/p}(k-1).
\]
\smallskip

{\noindent\bf Under Assumption~\ref{assum2}}
For $C_1,C_2, C_3>0$, define 
\begin{align*}
B :=\{ C_1\le X_{i}\le C_2\text{ for }1\le i\le k;  |X_{j}|<C_3(k/n)^{1/p} \text{ for }k<j\le n \}.
\end{align*}
On the event $B$,
\begin{align*}
\norms{\mvX}_{n,p}^2
=\left(\frac1n \sum_{i=1}^n |X_i|^p\right)^{2/p}
\le \left(\frac{k}{n}(C_2^p +C_3^p)\right)^{2/p}
\end{align*}
and
\begin{align*}
\sum_{i<j}X_i X_j
&\ge \frac{k(k-1)}{2}C_1^2
- nkC_1 C_3 (k/n)^{1/p}-\frac12 n^2 C_3^2 (k/n)^{2/p}.
\end{align*}
So,
\begin{align*}
n^{1-2/p}\log\cZ_{n,p}(\gb;\rho)
&\ge  n^{1-2/p}\log\E\left[\exp\left(\frac{\gb}{n}\frac{\sum_{i<j}X_i X_j}{\norms{\mvX}_{n,p}^2}\right);B\right]\\
&\ge \frac{\gb}{(C_2^p+C_3^p)^{\frac{2}{p}}}\Bigg( \frac{k^{1-2/p}(k-1)}{2}C_1^2 - (nk)^{1-1/p}C_1 C_3 - \frac12 n^{2(1-1/p)}C_3^2 \Bigg)\\ 
&\qquad\qquad +n^{1-2/p}\log\P(B).
\end{align*}
First, there are $C_2(n)>C_1(n)>0$ such that  $C_2/C_1\to 1$ and
\[
\limsup_{n\to\infty}n^{1-2/p}|\log\P(C_1\le X\le C_2)|=0
\]
for any probability measure $\rho$. By Assumption~\ref{assum2}, we can choose $C_3$ such that $C_3\to 0$ and
\[
\limsup_{n\to\infty} n^{2-2/p} |\log \P(|X|\le C_3(k/n)^{1/p})|=0.
\]
Combining these, we get
\begin{align*}
\frac{C_1^2}{(C_2^p+C_3^p)^{2/p}}\to 1,\qquad
\limsup_{n\to\infty} n^{1-2/p} |\log \P(B)|\le 0.
\end{align*}
and so 
\begin{align*}
\liminf_{n\to\infty} n^{1-2/p}\log\cZ_{n,p}(\gb;\rho) \ge \frac{\gb}{2} k^{1-2/p}(k-1)
\end{align*}
as desired.

\section{Multi-species Curie--Weiss model under $\ell^p$ constraint}\label{sec:mscw}

One of the natural generalizations of the $\ell^p$ constraint Curie--Weiss model is a multi-group or multi-species version. A multi-group version of the Curie--Weiss model was introduced in~\cite{GC2008} and developed in~\cite{BRS2019, KLSS2020, KT2022, FCP2011}. In this model, there are $d$ many groups, and a spin interaction between groups $s$ and $t$ for $s,t\in[d]$ is given by $\gd_{s,t}$. One can study this model with $\ell^p$ ball spin configuration. 

To be precise, we consider the complete graph  with $N$ vertices. Suppose there are $d$ groups in the set of vertices, and each group has $N_1, N_2, \ldots, N_d$ many vertices. That is, $N=N_1+\cdots+N_d$. We assume that as $N\to\infty$, the ratio $\frac{N_s}{N}$ converges to $\ga_s$, with $\ga_1+\cdots+\ga_d=1$.  Let $I_s$ be the index set of $s$-th group (so $|I_s|=N_s$), then the Hamiltonian is defined by
\[
H_N(\mvgs)=\exp\left(\frac{1}{N}\sum_{s,t=1}^d \gd_{s,t}\sum_{i\in I_s}\sum_{j\in I_t}\gs_i\gs_j\right)
\]
for $\mvgs\in \gO_N=\{-1,1\}^N = \prod_{s=1}^d\{-1,1\}^{N_s}$ and a $d\times d$ inverse temperature matrix $\mvgd=(\gd_{s,t})_{s,t=1,\ldots,d}$. The phase transition of the magnetizations of each group $(S_1, \ldots, S_d)$, $S_t=\frac{1}{N_t}\sum_{i\in I_t}\gs_i$, has been studied in~\cite{FCP2011, KT2022}, which proved that the multi-group Curie--Weiss model has similar phase transition as in the classical model. The critical temperature is characterized by the matrix $J:=\mvgd^{-1}-\mvga$, where $\mvga$ is a diagonal matrix whose entries are $\ga_1,\ldots,\ga_d$. In the high-temperature regime where $J$ is positive definite, it was shown that the magnetizations $(S_1,\ldots, S_d)$ are concentrated on 0 with Gaussian fluctuation. 

One can consider the multi-group Curie--Weiss model under $\ell^p$ constraint by replacing the spin configuration space with $\Omega_{N,p}$  and the Gibbs measure with the uniform measure on $\Omega_{N,p}$. As above, one can rewrite the Hamiltonian and the Gibbs measure regarding the self-scaled vector with $\rho_p$ distribution. That is, the partition function has the following form
\[
\cZ= \int\exp\left(\frac{1}{N}\sum_{s,t=1}^d \gd_{s,t}\sum_{i\in I_s}\sum_{j\in I_t}\frac{x_i x_j}{\norms{\mvx}_{n,p}^2}\right)d\rho_p(\mvx).
\]
Applying the GHS transform to linearize the Hamiltonian as before, one can perform a similar analysis in the high-temperature regime and $p\ge 2$. Note that the Hubbard--Stratonovich transform method has been used in~\cite{KT2022} for multi-group Curie--Weiss model with $\pm 1$ spin case $\Omega_N$.

\section{Discussions and Further Questions}\label{sec:oq}

\begin{enumerate}[label={\arabic*}., leftmargin=*]
\item For $p\ge 2$ and $\gb<\gb_c(p)$ (the high-temperature regime), we have shown that the magnetization has a Gaussian fluctuation. The natural following question is to obtain a rate of convergence for CLT. This will be done in a forthcoming paper. 
\item Another generalization one can investigate is a multiple $\ell^p$ constraints version. One can consider the Curie--Weiss model, where the spin configuration is the intersection of several $\ell^p$ balls for different $p$'s. This has a natural connection to microcanonical ensemble models in~\cite{Nam20} and nonlinear Schr\"odinger equations.
\item One can generalize the $\ell^p$ constraint model to $p$-spin model, where the Hamiltonian is defined by the interaction between $k$ many spins for $k\ge 3$. 
\item It is interesting to study a geometric interpretation for the GHS transformation. To prove the GHS transform, we introduced a random variable $U_{p,q}$ independent of $Z_{p}$ in Proposition~\ref{prop:decomp} that satisfies $Z_{p}U_{p,q}\equald Z_{q}$. For the existence of such random variables, we constructed a stochastic process $U_{p,q}$  in $p$ for fixed $q$  in Theorem~\ref{thm:lgamma}, which is a stronger result. Since such stochastic processes can be considered a stochastic deformation of the uniform measures on $\ell^p$ balls in $p$, it is natural to ask if the process will provide any geometric interpretation of the GHS transform. 
\item 
Note the discontinuity of $\gb_c(p)$ at $p=\infty$. It is interesting to investigate the limiting behavior of the phase transition phenomena. In particular, it is expected that when $p=\ga\log n$, our model would be closely related to an extremal point process behavior as $n\to\infty$. One can ask if there is another phase transition with respect to $\ga$.
\end{enumerate}

\vspace*{1ex}
\noindent{\bf Acknowledgments.} 
We want to thank Grigory Terlov, Kesav Krishnan, and Qiang Wu for many enlightening discussions at the beginning stage of the project. We also thank Prof.~Renming Song and Prof.~Christian Houdr\'e for providing valuable references.

\bibliographystyle{chicago}
\bibliography{cwp.bib}

\begin{thebibliography}{}

\bibitem[\protect\citeauthoryear{Barthe, Gu\'{e}don, Mendelson, and
  Naor}{Barthe et~al.}{2005}]{BGMN05}
Barthe, F., O.~Gu\'{e}don, S.~Mendelson, and A.~Naor (2005).
\newblock A probabilistic approach to the geometry of the {$l^n_p$}-ball.
\newblock {\em Ann. Probab.\/}~{\em 33\/}(2), 480--513.

\bibitem[\protect\citeauthoryear{Berthet, Rigollet, and Srivastava}{Berthet
  et~al.}{2019}]{BRS2019}
Berthet, Q., P.~Rigollet, and P.~Srivastava (2019).
\newblock Exact recovery in the {I}sing blockmodel.
\newblock {\em Ann. Statist.\/}~{\em 47\/}(4), 1805--1834.

\bibitem[\protect\citeauthoryear{Cerf and Gorny}{Cerf and Gorny}{2016}]{CG16}
Cerf, R. and M.~Gorny (2016).
\newblock A {C}urie-{W}eiss model of self-organized criticality.
\newblock {\em Ann. Probab.\/}~{\em 44\/}(1), 444--478.

\bibitem[\protect\citeauthoryear{Chatterjee}{Chatterjee}{2007}]{Chatt07}
Chatterjee, S. (2007).
\newblock Stein's method for concentration inequalities.
\newblock {\em Probab. Theory Related Fields\/}~{\em 138\/}(1-2), 305--321.

\bibitem[\protect\citeauthoryear{Chatterjee}{Chatterjee}{2017}]{SChatt17}
Chatterjee, S. (2017).
\newblock A note about the uniform distribution on the intersection of a
  simplex and a sphere.
\newblock {\em J. Topol. Anal.\/}~{\em 9\/}(4), 717--738.

\bibitem[\protect\citeauthoryear{Chatterjee and Shao}{Chatterjee and
  Shao}{2011}]{CS11}
Chatterjee, S. and Q.-M. Shao (2011).
\newblock Nonnormal approximation by {S}tein's method of exchangeable pairs
  with application to the {C}urie-{W}eiss model.
\newblock {\em Ann. Appl. Probab.\/}~{\em 21\/}(2), 464--483.

\bibitem[\protect\citeauthoryear{Chen and Sen}{Chen and Sen}{2023}]{ChenSen23}
Chen, W.-K. and A.~Sen (2023).
\newblock On {$\ell_p$}-{G}aussian-{G}rothendieck problem.
\newblock {\em Int. Math. Res. Not. IMRN\/}~{\em 2023\/}(3), 2344--2428.

\bibitem[\protect\citeauthoryear{de~la Pe\~{n}a, Lai, and Shao}{de~la Pe\~{n}a
  et~al.}{2009}]{PLS}
de~la Pe\~{n}a, V.~H., T.~L. Lai, and Q.-M. Shao (2009).
\newblock {\em Self-normalized processes}.
\newblock Probability and its Applications (New York). Springer-Verlag, Berlin.
\newblock Limit theory and statistical applications.

\bibitem[\protect\citeauthoryear{Dembo and Zeitouni}{Dembo and
  Zeitouni}{2010}]{DZbook}
Dembo, A. and O.~Zeitouni (2010).
\newblock {\em Large deviations techniques and applications}, Volume~38 of {\em
  Stochastic Modelling and Applied Probability}.
\newblock Springer-Verlag, Berlin.
\newblock Corrected reprint of the second (1998) edition.

\bibitem[\protect\citeauthoryear{Dominguez}{Dominguez}{2022}]{Domin22}
Dominguez, T. (2022).
\newblock The {$\ell^p$}-{G}aussian-{G}rothendieck problem with vector spins.
\newblock {\em Electron. J. Probab.\/}~{\em 27}, Paper No. 70, 46.

\bibitem[\protect\citeauthoryear{Dufresne}{Dufresne}{2010}]{Duf10}
Dufresne, D. (2010).
\newblock {$G$} distributions and the beta-gamma algebra.
\newblock {\em Electron. J. Probab.\/}~{\em 15}, no. 71, 2163--2199.

\bibitem[\protect\citeauthoryear{Eisele and Ellis}{Eisele and
  Ellis}{1988}]{EiseleEllis}
Eisele, T. and R.~S. Ellis (1988).
\newblock Multiple phase transitions in the generalized {C}urie-{W}eiss model.
\newblock {\em J. Statist. Phys.\/}~{\em 52\/}(1-2), 161--202.

\bibitem[\protect\citeauthoryear{Ellis}{Ellis}{2006}]{Ellis}
Ellis, R.~S. (2006).
\newblock {\em Entropy, large deviations, and statistical mechanics}.
\newblock Classics in Mathematics. Springer-Verlag, Berlin.
\newblock Reprint of the 1985 original.

\bibitem[\protect\citeauthoryear{Ellis and Newman}{Ellis and
  Newman}{1978}]{EN78}
Ellis, R.~S. and C.~M. Newman (1978).
\newblock Limit theorems for sums of dependent random variables occurring in
  statistical mechanics.
\newblock {\em Z. Wahrsch. Verw. Gebiete\/}~{\em 44\/}(2), 117--139.

\bibitem[\protect\citeauthoryear{Fedele and Contucci}{Fedele and
  Contucci}{2011}]{FCP2011}
Fedele, M. and P.~Contucci (2011).
\newblock Scaling limits for multi-species statistical mechanics mean-field
  models.
\newblock {\em J. Stat. Phys.\/}~{\em 144\/}(6), 1186--1205.

\bibitem[\protect\citeauthoryear{Friedli and Velenik}{Friedli and
  Velenik}{2018}]{FV18}
Friedli, S. and Y.~Velenik (2018).
\newblock {\em Statistical mechanics of lattice systems}.
\newblock Cambridge University Press, Cambridge.
\newblock A concrete mathematical introduction.

\bibitem[\protect\citeauthoryear{Gallo and Contucci}{Gallo and
  Contucci}{2008}]{GC2008}
Gallo, I. and P.~Contucci (2008).
\newblock Bipartite mean field spin systems. {E}xistence and solution.
\newblock {\em Math. Phys. Electron. J.\/}~{\em 14}, Paper 1, 21.

\bibitem[\protect\citeauthoryear{Gorny}{Gorny}{2014}]{G14}
Gorny, M. (2014).
\newblock A {C}urie-{W}eiss model of self-organized criticality: the {G}aussian
  case.
\newblock {\em Markov Process. Related Fields\/}~{\em 20\/}(3), 563--576.

\bibitem[\protect\citeauthoryear{Gorny and Varadhan}{Gorny and
  Varadhan}{2015}]{GorVar15}
Gorny, M. and S.~R.~S. Varadhan (2015).
\newblock Fluctuations of the self-normalized sum in the {C}urie-{W}eiss model
  of {SOC}.
\newblock {\em J. Stat. Phys.\/}~{\em 160\/}(3), 513--518.

\bibitem[\protect\citeauthoryear{Khot and Naor}{Khot and Naor}{2012}]{KhoNao12}
Khot, S. and A.~Naor (2012).
\newblock Grothendieck-type inequalities in combinatorial optimization.
\newblock {\em Comm. Pure Appl. Math.\/}~{\em 65\/}(7), 992--1035.

\bibitem[\protect\citeauthoryear{Kirsch and Toth}{Kirsch and
  Toth}{2022}]{KT2022}
Kirsch, W. and G.~Toth (2022).
\newblock Limit theorems for multi-group {C}urie-{W}eiss models via the method
  of moments.
\newblock {\em Math. Phys. Anal. Geom.\/}~{\em 25\/}(4), Paper No. 24, 43.

\bibitem[\protect\citeauthoryear{Kn\"{o}pfel, L\"{o}we, Schubert, and
  Sinulis}{Kn\"{o}pfel et~al.}{2020}]{KLSS2020}
Kn\"{o}pfel, H., M.~L\"{o}we, K.~Schubert, and A.~Sinulis (2020).
\newblock Fluctuation results for general block spin {I}sing models.
\newblock {\em J. Stat. Phys.\/}~{\em 178\/}(5), 1175--1200.

\bibitem[\protect\citeauthoryear{Mainardi, Mura, and Pagnini}{Mainardi
  et~al.}{2010}]{MMP10}
Mainardi, F., A.~Mura, and G.~Pagnini (2010).
\newblock The m-wright function in time-fractional diffusion processes: A
  tutorial survey.
\newblock {\em International Journal of Differential Equations\/}~{\em
  2010\/}(1), 104505.

\bibitem[\protect\citeauthoryear{Nam}{Nam}{2020}]{Nam20}
Nam, K. (2020).
\newblock Large deviations and localization of the microcanonical ensembles
  given by multiple constraints.
\newblock {\em Ann. Probab.\/}~{\em 48\/}(5), 2525--2564.

\bibitem[\protect\citeauthoryear{Papangelou}{Papangelou}{1989}]{Pap89}
Papangelou, F. (1989).
\newblock On the {G}aussian fluctuations of the critical {C}urie-{W}eiss model
  in statistical mechanics.
\newblock {\em Probab. Theory Related Fields\/}~{\em 83\/}(1-2), 265--278.

\bibitem[\protect\citeauthoryear{Sato}{Sato}{2013}]{satobook}
Sato, K.-i. (2013).
\newblock {\em L\'evy processes and infinitely divisible distributions\/}
  (Revised ed.), Volume~68 of {\em Cambridge Studies in Advanced Mathematics}.
\newblock Cambridge University Press, Cambridge.
\newblock Translated from the 1990 Japanese original.

\bibitem[\protect\citeauthoryear{Schechtman and Zinn}{Schechtman and
  Zinn}{1990}]{ScheZinn}
Schechtman, G. and J.~Zinn (1990).
\newblock On the volume of the intersection of two {$L^n_p$} balls.
\newblock {\em Proc. Amer. Math. Soc.\/}~{\em 110\/}(1), 217--224.

\end{thebibliography}

\end{document}